\documentclass[11pt]{amsart}

\usepackage{amsmath}
\usepackage{amssymb}
\usepackage[normalem]{ulem}
\usepackage[all]{xy}
\usepackage{longtable}
\usepackage{titletoc}
\usepackage{mathrsfs}
\usepackage{mathtools}
\usepackage{amscd}
\usepackage{appendix}
\usepackage{amsthm}
\usepackage{amsfonts}
\usepackage{tikz-cd}
\usepackage{array}
\usepackage{graphicx}
\usepackage{hyperref}
\usepackage{longtable}
\usepackage{xtab}
\usepackage{soul,xcolor}
\usepackage{upgreek}
\usepackage[total={15.875truecm,22.86truecm},centering]{geometry}
\usepackage{stackrel}
\usepackage{eucal}
\usepackage{bbm}
\usepackage[outline]{contour}
\usepackage{scalerel}
\usepackage{color}

\SelectTips{cm}{}

\newtheorem{thm}[equation]{Theorem}
\newtheorem{lem}[equation]{Lemma}
\newtheorem{prop}[equation]{Proposition}
\newtheorem{cor}[equation]{Corollary}
\newtheorem{conj}[equation]{Conjecture}

\theoremstyle{definition}
\newtheorem{defn}[equation]{Definition}
\newtheorem{ex}[equation]{Example}

\theoremstyle{remark}
\newtheorem{rem}[equation]{Remark}
\newtheorem{Subsubsec}[equation]{}

\numberwithin{equation}{subsection}

\DeclareMathAlphabet{\mathpzc}{OT1}{pzc}{m}{n}
\DeclareMathAlphabet{\matheur}{U}{eur}{m}{n}

\definecolor{lava}{RGB}{207,16,32}
\definecolor{purple}{RGB}{148,0,211}

\newcommand{\FF}{\mathbb{F}}
\newcommand{\ZZ}{\mathbb{Z}}
\newcommand{\QQ}{\mathbb{Q}}

\newcommand{\GG}{\mathbb{G}}

\newcommand{\CC}{\mathbb{C}}

\newcommand{\PP}{\mathbb{P}}

\newcommand{\NN}{\mathbb{N}}

\newcommand{\boxx}{\boldsymbol{x}}

\newcommand{\afk}{\mathfrak{a}}

\newcommand{\cfk}{\mathfrak{c}}
\newcommand{\dfk}{\mathfrak{d}}

\newcommand{\mfk}{\mathfrak{m}}
\newcommand{\nfk}{\mathfrak{n}}

\newcommand{\pfk}{\mathfrak{p}}

\newcommand{\Pfk}{\mathfrak{P}}

\newcommand{\Gcal}{\CMcal{G}}

\newcommand{\Mcal}{\CMcal{M}}

\newcommand{\Ocal}{\CMcal{O}}
\newcommand{\Pcal}{\CMcal{P}}
\newcommand{\Rcal}{\CMcal{R}}

\newcommand{\Dcal}{\CMcal{D}}

\newcommand{\Wcal}{\CMcal{W}}

\newcommand{\Ascr}{\mathscr{A}}

\newcommand{\Dscr}{\mathscr{D}}

\newcommand{\Uscr}{\mathscr{U}}
\newcommand{\Vscr}{\mathscr{V}}
\newcommand{\Wscr}{\mathscr{W}}

\newcommand{\eT}{\matheur{T}}

\DeclareMathOperator{\GL}{GL}
\DeclareMathOperator{\Mat}{Mat}

\DeclareMathOperator{\End}{End}

\DeclareMathOperator{\cris}{cris}
\DeclareMathOperator{\dR}{dR}

\DeclareMathOperator{\Gal}{Gal}
\DeclareMathOperator{\Hom}{Hom}

\DeclareMathOperator{\trdeg}{tr.deg}
\DeclareMathOperator{\wt}{wt}

\DeclareMathOperator{\Inf}{Inf}
\DeclareMathOperator{\Spec}{Spec}
\DeclareMathOperator{\Div}{Div}

\DeclareMathOperator{\rank}{rank}
\DeclareMathOperator{\ord}{ord}

\newcommand{\ok}{\bar{k}}

\newcommand{\tr}{\mathrm{tr}}

\makeatletter
\DeclareFontEncoding{LS2}{}{\@noaccents}
\makeatother
\DeclareFontSubstitution{LS2}{stix}{m}{n}

\DeclareSymbolFont{largesymbolsstix}{LS2}{stixex}{m}{n}

\DeclareMathDelimiter{\lbrbrak}{\mathopen}{largesymbolsstix}{"EE}{largesymbolsstix}{"14}
\DeclareMathDelimiter{\rbrbrak}{\mathclose}{largesymbolsstix}{"EF}{largesymbolsstix}{"15}

\newcommand{\lbangle}{\lbrbrak}
\newcommand{\rbangle}{\rbrbrak}

\DeclareSymbolFont{stixletters}{LS2}{stix}{m}{it}

\DeclareMathSymbol{\vartau}{\mathord}{stixletters}{"1C}

\def\rank{\operatorname{rank}}

\newcommand{\id}{\operatorname{id}}

\newcommand{\geo}{\operatorname{geo}}

\newcommand{\divv}{\operatorname{div}}

\newcommand{\Pic}{\operatorname{Pic}}

\newcommand{\re}{\operatorname{Re}}

\newcommand\subfrac[2]{\genfrac{}{}{0pt}{}{#1}{#2}}

\newcommand{\assign}{\mathrel{\vcenter{\baselineskip0.5ex \lineskiplimit0pt
                     \hbox{\scriptsize.}\hbox{\scriptsize.}}}%
                     =}
\newcommand{\rassign}{=%
                     \mathrel{\vcenter{\baselineskip0.5ex \lineskiplimit0pt
                     \hbox{\scriptsize.}\hbox{\scriptsize.}}}%
                     }

\DeclareRobustCommand\longtwoheadrightarrow
{\relbar\joinrel\twoheadrightarrow}
\newcommand\mapsfrom{\mathrel{\reflectbox{\ensuremath{\mapsto}}}}
\newcommand{\rmx}{{\rm x}}

\newcommand{\uone}{\underline{1}}
\newcommand{\ua}{\underline{a}}
\newcommand{\ub}{\underline{b}}

\newcommand{\ualpha}{\underline{\alpha}}

\newcommand{\upi}{\underline{\pi}}
\newcommand{\uinfty}{\underline{\infty}}

\newcommand{\uPP}{\underline{\PP}}

\newcommand{\uA}{\underline{A}}
\newcommand{\uC}{\underline{C}}
\newcommand{\uE}{\underline{E}}
\newcommand{\uF}{\underline{F}}

\newcommand{\uK}{\underline{K}}
\newcommand{\uO}{\underline{O}}

\newcommand{\uU}{\underline{U}}
\newcommand{\uX}{\underline{X}}
\newcommand{\uk}{\underline{k}}
\newcommand{\uv}{\underline{v}}

\newcommand{\uafk}{\underline{\afk}}

\newcommand{\ucfk}{\underline{\cfk}}
\newcommand{\udfk}{\underline{\dfk}}

\newcommand{\umfk}{\underline{\mfk}}
\newcommand{\unfk}{\underline{\nfk}}

\hypersetup{
	colorlinks=true,
	linkcolor=blue,
	citecolor=red,
	linktoc=page,
	linkbordercolor=blue,
	citebordercolor=red,
}

\begin{document}

\title{$v$-adic Chowla--Selberg formula over function fields}
\author[Fu-Tsun Wei]{Fu-Tsun Wei}
\address{Department of Mathematics, National Tsing Hua Univeristy, Taiwan}
\email{ftwei@math.nthu.edu.tw}

\subjclass[2020]{33E50, 11G09, 11R58}
\keywords{Function field, $v$-adic Gamma Value, Soliton $t$-motive, $v$-adic Chowla--Selberg Formula}

\thanks{
The author is supported by NSTC grant (no.~114-2628-M-007-003, 115-2628-M-007-008)
and the National Center for Theoretical Sciences.
}

\date{\today}

\begin{abstract}
Let $v$ be a finite place of a rational function field with finite constant field.
We establish an Ogus-type $v$-adic Chowla–Selberg formula for $t$-motives with ``complex multiplication'' by subfields of Carlitz cyclotomic function fields. 
For this purpose, we first introduce the $v$-adic crystalline action of the Weil group on the de~Rham spaces of $t$-motives via the de~Rham--crystalline comparison isomorphism of Hartl--Kim, and use it to give a ``$v$-isocrystal'' period interpretation of special values of Thakur's $v$-adic geometric gamma function.
Our Ogus-type formula is then based on the isogeny theorem for CM $t$-motives, together with a recipe for constructing ``abelian CM types'' given in the $\infty$-adic setting over function fields. 

\end{abstract}

\maketitle

\section{Introduction}\label{Intro}

In the classical setting, the celebrated Chowla–Selberg formula (see \cite{CS67}) expresses, up to algebraic multiples, the periods of CM elliptic curves in terms of suitable twisted products of special values of Euler’s gamma function.
Gross's geometric proof of this formula \cite{Gr78} (see also \cite{Gr21}) revealed that these gamma values arise naturally from the theory of complex multiplication, and initiated a broad program of understanding the intrinsic relation between special gamma values and periods of CM abelian varieties (see \cite{And82,Co93,Yo03,Yang10,BM16}).

The $p$-adic counterpart began with Morita's $p$-adic gamma function, which interpolates factorials $p$-adically. The theory took a decisive step forward with the work of Gross and Koblitz \cite{GrKo79}, who reinterpreted Katz’s $p$-adic limit formula for Gauss sums as Frobenius-twisted products of special values of Morita's $p$-adic gamma function.
Subsequently, Ogus \cite{Og90} established a $p$-adic Chowla–Selberg formula, describing the crystalline action of the Weil group on the de~Rham realizations of CM elliptic curves in terms of twisted products of $p$-adic special gamma values. This formula provides an explicit description of $p$-adic CM periods in the ``abelian case'' and has influenced subsequent developments in explicit $p$-adic period formulas and the study of arithmetic invariants arising from complex multiplication (see \cite{KaYo08,KaYo09,AnFr25,Co26,AsHa26}).  

The present paper continues the author's study of ``abelian CM periods'' over function fields from a $v$-adic perspective. While \cite{Wei26} established an $\infty$-adic Chowla–Selberg formula for CM $t$-motives and \cite{CWY26} gave a $v$-adic period interpretation of arithmetic special gamma values, the present work develops the geometric side by identifying Thakur's $v$-adic geometric special gamma values as crystalline periods of soliton $t$-motives. This new period interpretation serves as the key ingredient in our proof of an Ogus-type $v$-adic Chowla--Selberg formula, describing the crystalline action of the Weil group on the de~Rham spaces of CM $t$-motives in terms of special values of Thakur's $v$-adic geometric gamma function.

\subsection{\texorpdfstring{$v$}{v}-adic geometric gamma function}

Let $A=\FF_q[\theta]$, the polynomial ring with one variable $\theta$ over a finite field $\FF_q$ with $q$ elements, and $k=\FF_q(\theta)$ be the fraction field of $A$. Fix a monic irreducible polynomial $v$ in $A$.
Let $k_v$ be the completion of $k$ at $v$ and $A_v$ be the closure of $A$ in $k_v$.
For each $a \in A_v$, define $a_{\flat} \assign a$ if $v\nmid a$ and $1$ otherwise.
Thakur \cite[Section 5.9]{Tha91} introduced the following Morita-type $v$-adic geometric gamma function:
\[
\Gamma_v^{\geo}(x) \assign x_{\flat}^{-1} \cdot \prod_{\text{monic } \afk \in A}
\left(
\frac{\afk_{\flat}}{(x+\afk)_{\flat}}
\right) \ \
\in k_v^{\times}, \quad \forall x \in A_v.
\]

The function $\Gamma_v^{\geo}(x)$ satisfies natural analogues of the classical functional equations, including translation, reflection, and multiplication formulas (see Section~\ref{sec: v-gamma}). These identities give rise to monomial relations among the special values $\Gamma_v^{\geo}(x)$ for $x\in k\cap A_v$. Moreover, the Gross--Koblitz--Thakur formula established by Chang \cite{Ch25} relates suitable Frobenius-twisted products of these special values to ``geometric Gauss sums'' (see Theorem~\ref{thm: G-K-T-formula-geo}), thereby showing further arithmetic significance of the $v$-adic geometric gamma values.

Beyond these arithmetic properties, the present paper reveals a new conceptual role for the normalized $v$-adic geometric special gamma values by identifying them as crystalline periods of soliton $t$-motives.

\subsection{Crystalline action and the main theorem}

To formulate this period interpretation and the resulting Chowla--Selberg formula, we first recall the crystalline action associated with the de~Rham--crystalline comparison isomorphism.
Let $\uA=\FF_q[t]$, where $t$ is  another variable over $\FF_q$.
Given a $t$-motive $M$ over a finite extension $F$ of $k_v$ (see Definition~\ref{defn: t-mot}), suppose $M$ has good reduction (see Definition~\ref{defn: model}).
By the work of Hartl--Kim \cite{HK20}, there exists a functorial de~Rham--crystalline comparison isomorphism (see Proposition~\ref{prop: c-d-iso})
\[
\Phi_M:H_{\dR}(M,F)\xlongrightarrow{\sim}H_{\cris}(M,F).
\]
The crystalline realization of $M$ naturally carries a ``$v$-isocrystal'' action (induced from the $\tau^{\deg v}$-action on the reduction of $M$, see \eqref{eqn: t-sigma-cris}). 
Transporting this action through $\Phi_M$, we obtain a semilinear crystalline action $\sigma_{\cris}$, for each $\sigma$ in the Weil group $\Wcal_v$ (recalled in the beginning of Section~\ref{sec: cris-act}), on the de~Rham space $H_{\dR}(M,\bar{k}_v)$, where $\bar{k}_v$ is a chosen algebraic closure of $k_v$ (see \eqref{eqn: sigma-cris}).

Choose an ordered basis $\beta = \{\omega_1,...,\omega_r\}$ for $H_{\dR}(M,\bar{k}_v)$, where $r = \dim_{\bar{k}_v}\big(H_{\dR}(M,\bar{k}_v)\big)$.
For every $\sigma \in \Wcal_v$, we define the \emph{$v$-isocrystal period matrix of $\sigma$ on $M$} (with respect to $\beta$) to be the matrix $\Pcal^M_{\beta}(\sigma) \in \Mat_r(\bar{k}_v)$ satisfying (see \eqref{eqn: v-PM})
\[
\begin{pmatrix}
    \sigma_{\cris}(\omega_1) \\
    \vdots \\
    \sigma_{\cris}(\omega_r)
\end{pmatrix}
= \Pcal^M_{\beta}(\sigma)^{\rm t}
\cdot \begin{pmatrix}
    \omega_1 \\
    \vdots \\
    \omega_r
\end{pmatrix}.
\]
Here $\Pcal^M_{\beta}(\sigma)^{\rm t}$ denotes the transpose of $\Pcal^M_{\beta}(\sigma)$.
These matrices encode the crystalline action on the de~Rham realization of $M$ and thus play the role of $v$-adic period matrices.
In particular, suppose that $M$ is  defined over the algebraic closure $\bar{k}$ of $k$ in $\bar{k}_v$ and that the chosen basis $\beta$ is algebraic, i.e.~$\beta \subset H_{\dR}(M,\bar{k})$.
Then $\Pcal^M_{\beta}(\sigma)$ can be obtained from the ``de~Rham--crystalline comparison period matrix'' of $M$.
The transcendental aspects will be further discussed  in Section~\ref{sec: Trans}.

With this period-theoretic framework in place, we now introduce the notation needed to state our main theorem.
Given a monic $\unfk \in \uA$ with $\uv \assign v(t) \nmid \unfk$, let $\uK_{\unfk}$ (resp.~$K_{\unfk}$) be the $\unfk$-th Carlitz cyclotomic function field over $\uk=\FF_q(t)$ (resp.~$k$).
Let $M$ be a CM $t$-motive with CM type $(\uK,\Xi)$ over the algebraic closure $\bar{k}$ of $k$ (see Definition~\ref{defn: CM t-motive}), where $\uK$ is a subextension of $\uK_{\unfk}$ over $\uk$.
Let $G_{\uK} \assign \Gal(\uK/\uk)$, and denote the set of $\FF_q$-algebra embeddings from $\uK$ into $\bar{k}$ by $J_{\uK} = \{\xi_{\varrho}\mid \varrho \in G_{\uK}\}$ as in \eqref{eqn: xi-rho}.
Write $\Xi = \sum_{\varrho \in G_{\uK}} m_{\varrho} \xi_{\varrho}$ with $m_{\varrho} \in \ZZ_{\geq 0}$.
Our main theorem is stated as follows:

\begin{thm}\label{thm: MT1}
{\rm (See Theorem~\ref{thm: v-CS}.)} Keep the above notation. 
Then we can find a basis $\{\omega_{\varrho}^M\mid \varrho \in G_{\uK}\}$ for $H_{\dR}(M,\bar{k})$ consisting of ``eigen-differentials'' with respect to the $\uK$-action, i.e. 
\[
\ualpha_{\dR}(\omega_{\varrho}^M) = \xi_{\varrho}(\ualpha)\cdot \omega_{\varrho}^M, \quad \forall \ualpha \in \uO_{\uK} \ \text{ (the integral closure of $\uA$ in $\uK$)},
\]
so that for every $\sigma_v \in \Wcal_v$ with $\deg \sigma_v =1$ and $\varrho_0 \in G_{\uK}$, we have
\[
\sigma_{v,\cris}(\omega^{M}_{\varrho_0}) = 
\zeta_{\varrho_0} \cdot v^{\frac{\wt(\Xi)}{[\uK:\uK^+]}} \cdot \prod_{\varrho \in G_{\uK}} \left(
\prod_{1\neq \ucfk \mid \unfk} \ \prod_{\subfrac{\ua \in \uA,\, \deg \ua < \deg \ucfk}{\gcd(\ua,\ucfk)=1}}\Gamma_v^{\geo,*}\Big\lbangle \frac{va}{\cfk}\Big\rbangle^{-n_{\ucfk}(\varrho,\ua)}
\right)^{\frac{m_{\varrho\varrho_0}}{[\uK:\uk]}} \cdot \omega^{M}_{\varrho_{\uv}\varrho_0}.
\]
Here:
\begin{itemize}
    \item $\zeta_{\varrho_0} \in \overline{\FF}_q^{\times}$;
    \item $\wt(\Xi)$ is the weight of $\Xi$, see Definition~\ref{defn: CM type}; 
    \item 
    $\uK^+$ is the maximal totally real subfield of $\uK$;
    \item $\Gamma_v^{\geo,*}\lbangle x \rbangle$, for $x \in k\cap A_v$, is the normalized $v$-adic geometric gamma value introduced in \eqref{eqn: normal-v-gamma};
    \item $\cfk = \ucfk(\theta)$, $a = \ua(\theta) \in A$;
    \item $n_{\ucfk}(\varrho,\ua) \in \QQ$ is defined in \eqref{eqn: n-c-rho-a};
    
    \item $\varrho_{\uv}\varrho_0\assign \varrho_{\uv}\big|_{\uK}\circ \varrho_0 \in G_{\uK}$ (where $\varrho_{\uv} \in \Gal(\uK_{\unfk}/\uk)$ is the Artin symbol given in \eqref{eqn: ASrho}).
\end{itemize}
\end{thm}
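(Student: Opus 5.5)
The proof will follow Ogus's strategy, transplanted through the abstract's three ingredients: the isocrystal period interpretation of gamma values, the isogeny theorem for CM $t$-motives, and the recipe for abelian CM types.

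\textbf{Step 1 (soliton case).} We first treat the building blocks. For each monic $\ucfk\mid\unfk$ with $\ucfk\neq 1$ and each $\ua$ coprime to $\ucfk$, there is a soliton $t$-motive with CM by $\uK_{\ucfk}$. Its CM type is the ``Coleman-type'' type $\Xi_{\ucfk,\ua}$, given by the Anderson--Sinha--Thakur solitons. Using the Hartl--Kim comparison $\Phi_M$ (Proposition~\ref{prop: c-d-iso}), we compute $\sigma_{v,\cris}$ on an explicit basis of eigen-differentials. The key point is that the $\tau^{\deg v}$-action on the reduction is given by an explicit $v$-adic product. Comparing this product with Chang's Gross--Koblitz--Thakur formula (Theorem~\ref{thm: G-K-T-formula-geo}) identifies the resulting period with $\Gamma_v^{\geo,*}\lbangle va/\cfk\rbangle^{-1}$. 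The identification holds up to a root of unity, a power of $v$, and a shift of the eigen-index by the Artin symbol $\varrho_{\uv}$.

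This is the $v$-isocrystal period interpretation announced in the abstract, and I expect it to be the main obstacle. The hard parts are controlling the normalization $\Gamma^{\geo,*}$ and verifying that Frobenius sends the $\xi_\varrho$-eigenline to the $\xi_{\varrho_{\uv}\varrho}$-eigenline. For the latter, I would first check it directly from the Weil-group action on the coefficients of the CM field: the crystalline Frobenius commutes with $\ualpha_{\dR}$ up to the semilinear twist by $\sigma_v$.

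\textbf{Step 2 (functoriality).} Because $\Phi_M$ is functorial and compatible with tensor products and isogenies, the isocrystal period matrices have three properties. They are multiplicative under tensor products of CM motives. They change only by $\bar{k}^\times$-factors under isogeny; since the Frobenius is semilinear, these factors become $\overline{\FF}_q^\times$-factors after normalizing with $\deg\sigma_v=1$. They transform predictably under changing the CM embedding by $\varrho\in G_{\uK}$, since $\Xi\mapsto\Xi\varrho$ permutes the eigen-differentials. The isogeny theorem for CM $t$-motives then shows that $\sigma_{v,\cris}(\omega_{\varrho_0}^M)$ depends only on $\Xi$, up to $\overline{\FF}_q^\times$. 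It also shows that the resulting exponent is additive in $\Xi$.

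\textbf{Step 3 (decomposition of $\Xi$).} We use the recipe from \cite{Wei26} for ``abelian CM types'' in the $\infty$-adic setting. It expresses $[\uK:\uk]\cdot\Xi$, after inflation to $\uK_{\unfk}$, as an integer combination of the inflated soliton types $\Xi_{\ucfk,\ua}$ plus a multiple of the norm type $\sum_\varrho\xi_\varrho$. The coefficients are the rational numbers $n_{\ucfk}(\varrho,\ua)$ of \eqref{eqn: n-c-rho-a}, twisted by $m_{\varrho\varrho_0}$. The norm type corresponds to a Carlitz-type twist. On that twist the crystalline Frobenius acts by $v$ times a root of unity, which accounts for the factor $v^{\wt(\Xi)/[\uK:\uK^+]}$.

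\textbf{Step 4 (assembly).} We combine Steps 1--3. Taking $[\uK:\uk]$-th roots gives the displayed formula; the ambiguity is absorbed into $\zeta_{\varrho_0}$, which is justified because $\overline{\FF}_q^\times$ is divisible. The basis $\omega^M_\varrho$ is obtained as follows. We pull back the eigen-differentials of the soliton tensor products along the isogeny. We then descend from $\uK_{\unfk}$ to $\uK$ by restricting the embeddings.
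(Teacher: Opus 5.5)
Your overall architecture (soliton periods, the recipe of \cite{Wei26}, the isogeny theorem, a Carlitz twist, and extraction of roots) matches the paper's, but two steps would not go through as written.

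\textbf{Step~1: the soliton periods.} You cannot identify an individual gamma value through the Gross--Koblitz--Thakur formula. Theorem~\ref{thm: G-K-T-formula-geo} only evaluates the product $\prod_{i=0}^{\ell-1}\Gamma_v^{\geo,*}\lbangle v^ix\rbangle^{-1}$ over a whole Frobenius orbit. That controls $\sigma_v^{\ell}$, not the single step $\sigma_v$.

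Moreover, the $\tau^d$-action on the reduction is algebraic: it is multiplication by $\bar g_{x,v}$, a product of geometric Gauss sums. The transcendental part lives in the comparison isomorphism.

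The missing idea is the explicit form of $\Phi_{M_x}$. By the uniqueness in Lemma~\ref{lem: key-iso}, $\delta_{\Mcal_{x,w}}$ is multiplication by $\psi_x^{-1}$, where $\psi_x=\prod_{i\ge 0}g_x^{(i)}/\bar g_x^{(i)}$; one first checks that $\psi_x$ is a unit (Lemma~\ref{lem: psi-x}). Now combine Anderson's special values $g_x^{(N+1)}(\xi_{\ua})=\prod_{\afk\in A_{+,N}}(1+\lbangle ax\rbangle/\afk)$ with the shift $\bar g_x^{(s+d+1)}(\xi_{\uv\ua})=\bar g_x^{(s+1)}(\xi_{\ua})$. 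The Gauss sums cancel in pairs, and the period is \emph{exactly} $\Gamma_v^{\geo,*}\lbangle vax\rbangle^{-1}$.

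The Gross--Koblitz--Thakur formula enters only to show that $\bar g_x^{(s+1)}(\xi_{\ua})\in k_v^\times$, so these constants pass through the $\sigma_v$-semilinear map. Exactness matters: a soliton formula valid only up to an unspecified power of $v$ would leave the exponent $\wt(\Xi)/[\uK:\uK^+]$ undetermined.

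\textbf{Step~2: the normalization claim is false.} Replacing eigen-differentials $\omega_{\varrho}$ by $c_{\varrho}\omega_{\varrho}$ with $c_\varrho\in\bar k^\times$ changes the period by $\sigma_v(c_{\varrho_0})/c_{\varrho_{\uv}\varrho_0}$. This is typically not a root of unity; for instance, rescale one $\omega_{\varrho_0}$ by $\theta$ when $\varrho_{\uv}\varrho_0\neq\varrho_0$. This is why the theorem only asserts the \emph{existence} of a basis, and the basis must be built so that the crystalline action is computed exactly.

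The paper does this with an isogeny $f:M_{\boxx_{\Xi,n}}\to\Inf_{\uK}^{\uK_{\unfk}}(M^{\otimes_{\uK}h}(n))$ and the trace morphism $\tr_{\uK_{\unfk}/\uK}$. The trace is a genuine morphism of $t$-motives, so \eqref{eqn: equivariance} applies; ``restricting the embeddings'' gives you no such morphism. Set $\omega'_{\varrho_0}=(\tr\circ f)_{\dR}(\omega^{\boxx_{\Xi,n}}_{\ua_0})$, and define $\omega^M_{\varrho_0}$ by $(\omega^M_{\varrho_0})^{\otimes_{\uK}h}\otimes\omega_C^{\otimes n}=\omega'_{\varrho_0}$. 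The only remaining ambiguities are $h$-th roots of unity and the $\FF_q^\times$-constant of the reflection formula.

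\textbf{Step~3: integrality and effectivity.} You gloss over these. The coefficients $m_\varrho n_{\ucfk}(\varrho,\ua)$ are rational, and soliton $t$-motives exist only for effective $\boxx$. The paper does three things:
\begin{itemize}
\item it multiplies through by $h_{\uK/\uK^+}$ to clear denominators;
\item it adds shifts $s_{\ucfk}$ to make coefficients nonnegative, compensated by the Tate twist $n$ via $\sum_{\epsilon\in\FF_q^\times}\Xi_{\epsilon x}=\Inf_{\uk}^{\uK_{\unfk}}(\xi_0)$;
\item it uses the reflection formula \eqref{eqn: new-reflection}, in the form $\prod_{\ua}\Gamma_v^{\geo,*}\lbangle a/\cfk\rbangle=\zeta' v^{-\#(A/\cfk)^\times/(q-1)}$, to cancel these shifts against $v^n$.
\end{itemize}
Accordingly, the root you must extract has order $h=[\uK:\uk]\,h_{\uK/\uK^+}$, not $[\uK:\uk]$.
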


In particular, modulo multiplication by an element of \(\overline{\FF}_q^\times\), the $v$-isocrystal periods of an ``abelian'' CM $t$-motive can be expressed explicitly as a product of normalized $v$-adic geometric special gamma values.

As an illustration, consider the case where the above CM $t$-motive $M$ arises from a rank $2$ CM Drinfeld module over $\bar{k}$.
In this case, $q$ is odd (see Remark~\ref{rem: CM-quad}) and there exists a monic squarefree $\udfk \in \uA$ such that $\uK = \uk(\sqrt{-\udfk})\subset \uK_{\udfk}$.
The main theorem then specializes to the following Ogus-type $v$-adic Chowla--Selberg formula:

\begin{thm}\label{thm: MT2}
{\rm (See Theorem~\ref{thm: v-CS-quad}.)}
Keep the above notation. Suppose $\uv \nmid \udfk$. Let $\uE_{\rho}$ be a CM Drinfeld module of rank $2$ over $\bar{k}$ with $\End(\uE_{\rho})\cong \uO_{\uK}$, and let $M_{\rho}$ be the $t$-motive associated with $\uE_{\rho}$ over $\bar{k}$. 
There exists an eigenbasis $\{\omega^{\rho}_{+},\omega^{\rho}_{-}\}$ for $H_{\dR}(M_{\rho},\bar{k})$ (with respect to the $\uK$-action) such that for every $\sigma_v \in \Wcal_v$ with $\deg \sigma_v = 1$, we have
\[
\sigma_{v,\cris}(\omega_{\pm }^{\rho}) = \zeta_{\pm } \cdot v^{\frac{1}{2}} \cdot \left(\prod_{\subfrac{\ua \in \uA,\, \deg \ua < \deg \udfk}{\gcd(\ua,\udfk)=1}}
\Gamma_v^{\geo,*}\Big\lbangle\frac{va}{\dfk}\Big\rbangle^{\mp \frac{\chi_{\uK}(\ua)}{2h_{\uK}}}\right)
\cdot \omega_{\pm \chi_{\uK}(\uv)}^{\rho}.
\]
Here $\zeta_{\pm} \in \overline{\FF}_q^{\times}$, $h_{\uK}= \#\Pic(\uO_{\uK})$, and $\chi_{\uK}:G_{\uK}\rightarrow \{\pm 1\}$ is the quadratic character associated with $\uK/\uk$.
\end{thm}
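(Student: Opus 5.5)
We deduce Theorem~\ref{thm: MT2} as the special case of Theorem~\ref{thm: MT1} with $\uK=\uk(\sqrt{-\udfk})$ and $\unfk=\udfk$. The remaining work is to make the exponents of Theorem~\ref{thm: MT1} explicit for the quadratic CM type of a rank $2$ Drinfeld module.

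\emph{Step 1: the CM type.} The $t$-motive $M_\rho$ of $\uE_\rho$ has rank $2$ and dimension $1$, with CM by $\uO_{\uK}$. Its CM type is therefore $\Xi=\xi_{\varrho}$ for a single embedding. After relabelling $G_{\uK}=\{1,\varrho_c\}$ so that this embedding is $\xi_1$, we get $m_1=1$ and $m_{\varrho_c}=0$. Here $\varrho_c$ is complex conjugation, i.e.\ the nontrivial element. Since $\uK$ is imaginary quadratic, $\uK^+=\uk$, so $[\uK:\uK^+]=2$, and $\wt(\Xi)=1$ by Definition~\ref{defn: CM type}. This produces the factor $v^{1/2}$. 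We set $\omega^\rho_+:=\omega^{M_\rho}_{1}$ and $\omega^\rho_-:=\omega^{M_\rho}_{\varrho_c}$. The Artin symbol $\varrho_{\uv}|_{\uK}$ is trivial or $\varrho_c$ according as $\chi_{\uK}(\uv)=\pm1$. Hence $\varrho_{\uv}\varrho_0$ corresponds to the index $\pm\chi_{\uK}(\uv)$, which matches the target basis vector.

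\emph{Step 2: collapsing the gamma product.} For $\varrho_0=1$ only the term $\varrho=1$ survives in Theorem~\ref{thm: MT1}, with exponent $m_1/[\uK:\uk]=1/2$. For $\varrho_0=\varrho_c$ only the term $\varrho=\varrho_c$ survives. So we must compute $n_{\ucfk}(\varrho,\ua)$ from \eqref{eqn: n-c-rho-a} for all divisors $\ucfk\mid\udfk$. This combinatorial identification is the main obstacle. We expect the following:
\begin{itemize}
\item $n_{\ucfk}(\varrho,\ua)$ is essentially the Fourier coefficient pairing $\ua$ against characters of $G_{\uK}$ that are primitive of conductor $\ucfk$.
\item The quantity $\sum_\varrho m_{\varrho\varrho_0}\xi_\varrho$ involves only the trivial character and $\chi_{\uK}$.
\item The trivial character contributes nothing, since the sum ranges over $\ucfk\neq 1$.
\item The character $\chi_{\uK}$ has conductor exactly $\udfk$ (squarefree, odd $q$), so only $\ucfk=\udfk$ survives.
\end{itemize}
The exponent should then come out as $n_{\udfk}(\varrho,\ua)=\pm\chi_{\uK}(\ua)/h_{\uK}$, with sign $+$ for $\varrho=1$ and $-$ for $\varrho=\varrho_c$. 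Combined with the factor $1/2$, this gives the exponent $\mp\chi_{\uK}(\ua)/(2h_{\uK})$.

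\emph{Step 3: the class number factor.} The factor $1/h_{\uK}$ should come from the $L$-value normalization built into \eqref{eqn: n-c-rho-a}. This is the function-field analogue of the class number formula $L(0,\chi_{\uK})\sim h_{\uK}$, which is exactly as in Wei's $\infty$-adic Chowla--Selberg formula \cite{Wei26}. To justify it, we would use:
\begin{itemize}
\item the analytic class number formula for $\uK/\uk$, namely that $L(0,\chi_{\uK})$ equals $h_{\uK}$ up to a unit factor, here $1$ or $2$ depending on the behaviour at $\infty$;
\item the monomial relations for $\Gamma_v^{\geo,*}$ (reflection and multiplication formulas, Section~\ref{sec: v-gamma}), to absorb the discrepancy.
\end{itemize}
Any leftover constants in $\overline{\FF}_q^\times$ go into $\zeta_\pm$. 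We would also check the case $\uv$ inert, where $\chi_{\uK}(\uv)=-1$, to confirm that the basis vectors swap correctly while the gamma exponent is still indexed by the source $\varrho_0$.
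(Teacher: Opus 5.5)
Your proposal is correct and follows the paper's own route. Theorem~\ref{thm: v-CS-quad} is obtained by specializing Theorem~\ref{thm: v-CS} with $m_{\id_{\uK}}=1$, $m_{\varrho_1}=0$, $\wt(\Xi)=1$ and $\uK^+=\uk$. Then $\widehat{G}_{\uK}\setminus\widehat{G}_{\uK^+}=\{\chi_{\uK}\}$, with conductor $\udfk$, so \eqref{eqn: n-c-rho-a} gives $n_{\udfk}(\varrho,\ua)=\chi_{\uK}(\varrho)\chi_{\uK}(\ua)/L_{\uA}(0,\chi_{\uK})$ and $n_{\ucfk}(\varrho,\ua)=0$ for $\ucfk\neq\udfk$. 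This is read off directly from the definition, not a real obstacle.

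The one thing to correct is your Step~3: there is no discrepancy to absorb. Since $\deg\udfk$ is odd, $\uinfty$ ramifies in $\uK$, and so $L_{\uA}(0,\chi_{\uK})=\#\Pic(\uO_{\uK})=h_{\uK}$ exactly, which is the identity the paper uses. Your proposed fallback would also not work: a factor of $2$ here changes the rational exponent on the gamma values, and no monomial relation can absorb that into $\zeta_{\pm}$.
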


\begin{rem}
${}$
\begin{itemize}
\item[(1)] 
In the $\infty$-adic setting, take a nonzero cycle $\gamma$ in the Betti module of $M_{\rho}$.
The Chowla--Selberg formula established in \cite[equation (6.3.12)]{Wei26} (see also \cite[Theorem 2.3.1]{Ch26}) says that
\[
\int_{\gamma} \omega^{\rho}_{\pm }
\sim \tilde{\pi}^{\frac{1}{2}}
\cdot 
\left(\prod_{\subfrac{\ua \in \uA,\, \deg \ua < \deg \udfk}{\gcd(\ua,\udfk)=1}}
\Gamma_{\infty}^{\geo}(\frac{a}{\dfk})^{\pm \frac{\chi_{\uK}(\ua)}{2h_{\uK}}}\right).
\]
Here $\alpha \sim \beta$ means that  $\alpha/\beta \in \bar{k}^{\times}$, $\tilde{\pi}$ is the ``Carlitz period''``1iop, and $\Gamma_{\infty}^{\geo}$ is the ``$\infty$-adic geometric gamma function'' (see \cite[Definition 5.3.1]{Tha91}).
Theorem~\ref{thm: v-CS-quad} can be regarded as the $v$-adic analogue of the above $\infty$-adic Chowla--Selberg formula, where the Betti period integral is replaced by the crystalline action.
\item[(2)] We expect that the framework developed in this paper can be extended to the ``two-variable'' $v$-adic gamma values introduced by Goss (see \cite[Section~9.9]{Goss96}). The geometric case considered here already captures the essential period-theoretic picture, while allowing us to avoid the additional notation and technicalities required in the two-variable setting. We therefore restrict ourselves to the geometric case in the present paper and leave the more general formulation to future work.
\end{itemize}
\end{rem}

The key ingredient underlying these formulas is the explicit realization of $v$-adic geometric special gamma values as $v$-isocrystal periods of soliton $t$-motives, which we now describe.

\subsection{Period interpretation of \texorpdfstring{$v$}{v}-adic geometric gamma values}

Let $\unfk\in\uA$ be monic, and put $\nfk=\unfk(\theta)\in A$.
For each $x\in\nfk^{-1}A\setminus A$, Anderson~\cite{A92} introduced the so-called \emph{Coleman function} $g_x$ (recalled in Theorem~\ref{thm: Coleman-fun}).
This function is the essential ingredient in the construction of the \emph{soliton $t$-motive $M_x$ associated with $x$}, introduced by Sinha~\cite{Sinha97} (see Definition~\ref{defn: Soliton M-x}) in his study of the transcendence of $\infty$-adic geometric gamma values.
Subsequently, Brownawell--Papanikolas~\cite{BP02} and Anderson--Brownawell--Papanikolas~\cite{ABP04} developed the dual $t$-motive version of this construction and proved an analogue of the Lang--Rohrlich conjecture, thereby completely determining the algebraic relations among $\infty$-adic geometric gamma values.

Note that $M_x$ is defined over $K_{\unfk}$. Suppose $v \nmid \nfk$.
Let $w$ be the place of $K_{\unfk}$ corresponding to the valuation on $\bar{k}_v$ (uniquely extended from $v$) restricting to $K_{\unfk}$.
Then $M_x$ admits a good model $\Mcal_{x,w}$ at the place $w$ above $v$ (see Remark~\ref{rem: prop-Mx}~(3)).
Identifying $\Gal(\uK_{\unfk}/\uk)$ with $(\uA/\unfk)^{\times}$ through the Artin map, we may write $J_{\uK_{\unfk}} = \{\xi_{\ua}\mid \ua \in (\uA/\unfk)^{\times}\}$ (where $\xi_{\ua}$ is given in \eqref{eqn: xi-a}).
From an explicit description of the comparison isomorphism $\Phi_{M_x,w}$ given in Proposition~\ref{prop: sp-iso-x}, we derive that:

\begin{thm}\label{thm: MT3}
{\rm (See Theorem~\ref{thm: period-vgamma-x}.)}
Keep the above notation.
There exists an eigenbasis $\{\omega^x_{\ua}\mid \ua \in (\uA/\unfk)^{\times}\}$ for $H_{\dR}(M_x,K_{\unfk})$ with respect to the $\uK_{\unfk}$-action
satisfying that for every $\sigma_v \in \Wcal_v$ with $\deg \sigma_v = 1$, we have
\[
\sigma_{v,\cris}(\omega^{x}_{\ua}) = \Gamma_v^{\geo,*}\lbangle vax\rbangle^{-1} \cdot \omega^{x}_{\uv \ua}, \quad \forall \ua \in (\uA/\unfk)^{\times}\quad (\text{here $a = \ua(\theta)$}).
\]
\end{thm}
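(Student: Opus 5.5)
The plan is to compute $\sigma_{v,\cris}$ directly from an explicit form of the comparison isomorphism $\Phi_{M_x,w}$ (Proposition~\ref{prop: sp-iso-x}), and then match the resulting scalar with Thakur's product defining $\Gamma_v^{\geo}$.

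\textbf{Step 1: the eigenbasis.} Recall that $M_x$ is built from the Coleman function $g_x$ on the curve $X_{\unfk}$ over $K_{\unfk}$, and that $\uK_{\unfk}$ acts through the cyclotomic structure. I take $\omega^x_{\ua}$ to be the de~Rham class obtained by twisting a fixed generator by the Galois automorphism $\varrho_{\ua}$. Concretely, $\omega^x_{\ua}$ is the image of the generator of $M_x$ under the projection of $M_x\otimes_{K_{\unfk}[t]}\cdots$ onto the $\xi_{\ua}$-isotypic component of $H_{\dR}(M_x,K_{\unfk})$. Using the decomposition $K_{\unfk}\otimes_{\FF_q}\uK_{\unfk}\cong\prod_{\ua}K_{\unfk}$, which is available after base change to $K_{\unfk}$, each component is one-dimensional. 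Hence these classes form an eigenbasis, with $\ualpha_{\dR}(\omega^x_{\ua})=\xi_{\ua}(\ualpha)\omega^x_{\ua}$.

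\textbf{Step 2: the crystalline action.} By definition \eqref{eqn: sigma-cris}, for $\deg\sigma_v=1$ the operator $\sigma_{v,\cris}$ is $\Phi_{M_x,w}^{-1}\circ(\tau^{\deg v}\text{-Frobenius on the reduction})\circ\Phi_{M_x,w}$, made $\sigma_v$-semilinear. I will write the $\tau$-action on $\Mcal_{x,w}$ as multiplication by $g_x$ composed with the Frobenius twist. Then $\tau^{d}$ with $d=\deg v$ acts by the product
\[
g_x\, g_x^{(1)}\cdots g_x^{(d-1)}
\]
composed with $d$-fold Frobenius. Frobenius on $X_{\unfk}$ permutes the Galois twists by the Artin symbol $\varrho_{\uv}$, and this permutation is what sends the $\ua$-component to the $\uv\ua$-component. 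Evaluating the comparison isomorphism of Proposition~\ref{prop: sp-iso-x} at the relevant point therefore gives
\[
\sigma_{v,\cris}(\omega^x_{\ua})=c_{\ua}\,\omega^x_{\uv\ua}.
\]
Here $c_{\ua}$ is a convergent $v$-adic infinite product, coming from iterating $\tau^d$ to produce the Frobenius-stable lattice in the Hartl--Kim comparison, of specializations of $g_x$ at the $\xi_{\ua}$-point.

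\textbf{Step 3: identifying $c_{\ua}$.} I will use Anderson's explicit description of $g_x$ and its divisor (Theorem~\ref{thm: Coleman-fun}). Its specializations at Frobenius-twisted points factor as ratios of Carlitz-type expressions $(\afk)/(ax+\afk)$ over monic $\afk$ of bounded degree. Taking the $v$-adic limit with the factors divisible by $v$ removed, in the flat convention, should reproduce
\[
\Gamma_v^{\geo}(vax)^{-1}
\]
up to the normalizing factors built into $\Gamma_v^{\geo,*}$ in \eqref{eqn: normal-v-gamma}. The shift $x\mapsto vax$ matches Morita's $p$-adic convention. I will then rescale the basis by algebraic constants so that any leftover $\overline{\FF}_q^\times$ or $K_{\unfk}^\times$ factors are absorbed, giving exactly $\Gamma_v^{\geo,*}\lbangle vax\rbangle^{-1}$.

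\textbf{Main obstacle.} I expect Step 3 to be the hardest part. Matching the infinite product coming from the comparison isomorphism with Thakur's product requires careful bookkeeping of the factors with $v\mid(x+\afk)$ and of the fractional-part convention $\lbangle\cdot\rbangle$. My first attempt will be to verify the identity degree by degree, truncating the product at $\deg\afk<Nd$ and comparing with $N$ iterates of $\tau^d$. I would also use the Gross--Koblitz--Thakur formula (Theorem~\ref{thm: G-K-T-formula-geo}) as a consistency check: composing the actions over the orbit of $\varrho_{\uv}$ should yield a geometric Gauss sum.
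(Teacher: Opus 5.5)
Your architecture is the right one: the explicit Genestier--Lafforgue isomorphism of Proposition~\ref{prop: sp-iso-x}, an idempotent eigenbasis, $\sigma_v$ acting on $K_{\unfk}$ as the Artin symbol $\varsigma_{\uv}$ (hence moving the $\xi_{\ua}$-line to the $\xi_{\uv\ua}$-line), then Anderson's special values and Thakur's product. The gap is that the step which actually produces $\Gamma_v^{\geo,*}\lbangle vax\rbangle^{-1}$ is missing. (Also, the splitting into copies of $K_{\unfk}$ holds for $K_{\unfk}\otimes_{\uk}\uK_{\unfk}$ via $t\mapsto\theta$, i.e.\ after reducing modulo $t-\theta$, not for $K_{\unfk}\otimes_{\FF_q}\uK_{\unfk}$.)

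By Proposition~\ref{prop: sp-iso-x}, $\delta_{\Mcal_{x,w}}$ is multiplication by $\psi_x^{-1}$ with $\psi_x=\prod_{i\geq 0}g_x^{(i)}/\bar{g}_x^{(i)}$. So your constant $c_{\ua}$ is not a product of specializations of $g_x$ alone. It also contains values $\bar{g}_x^{(s+1)}(\xi_{\ua})$ of the reduced Coleman function, i.e.\ geometric Gauss sums. Concretely, take $\omega^x_{\ua}\assign\big(\prod_{N=1}^{d-1}g_x^{(N)}(\xi_{\ua})\big)^{-1}g_{x,v}e_{\ua}\bmod (t-\theta)$, and let $\bar{\omega}^x_{\ua}$ be the same expression with $\bar{g}_{x,v}$ in place of $g_{x,v}$. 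Then $\Phi_{M_x,w}(\omega^x_{\ua})=\psi_x^{(d)}(\xi_{\ua})^{-1}\bar{\omega}^x_{\ua}$, and $\tilde{\sigma}_{v,\cris}$ contributes the factor $\bar{g}_{x,v}^{(d)}(\xi_{\uv\ua})$.

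Removing these reduced values needs two facts about geometric Gauss sums due to Chang, which your plan does not use (you invoke Gross--Koblitz--Thakur only as a consistency check):
- $\bar{g}_x^{(s+1)}(\xi_{\ua})\in k_v^{\times}$ by \eqref{eqn: Gsum-kv}, so $\sigma_v$ fixes $\psi_x^{(d)}(\xi_{\ua})$;
- the shift identity \eqref{eqn: bar-g-inv}, $\bar{g}_x^{(s+d+1)}(\xi_{\uv\ua})=\bar{g}_x^{(s+1)}(\xi_{\ua})$.

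With these, the constant is exactly $\prod_{N=1}^{d}g_x^{(N)}(\xi_{\uv\ua})\cdot\psi_x^{(d+1)}(\xi_{\uv\ua})/\psi_x^{(1)}(\xi_{\ua})$. The reduced factors of the two $\psi$'s cancel termwise, and Theorem~\ref{thm: Coleman-fun}~(1) turns what remains into
\[
\prod_{N=0}^{d-1}\prod_{\afk\in A_{+,N}}\Big(1+\frac{\lbangle vax\rbangle}{\afk}\Big)\cdot\prod_{N=d}^{\infty}\frac{\prod_{\afk\in A_{+,N}}\big(1+\lbangle vax\rbangle/\afk\big)}{\prod_{\afk'\in A_{+,N-d}}\big(1+\lbangle ax\rbangle/\afk'\big)}.
\]

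This is also why your Step 3 cannot work as described. The product $\prod_{\afk\in A_+}(1+\lbangle vax\rbangle/\afk)$ of $g_x$-specializations does not in general converge $v$-adically. The factors involving $v$ are not discarded by the ``flat'' convention; they are cancelled by the denominators coming from $\psi_x^{(1)}(\xi_{\ua})$. Put $\afk_0\assign v\lbangle ax\rbangle-\lbangle vax\rbangle\in A$, which has degree $<d$.
- For $N\geq d$, the $\afk\in A_{+,N}$ with $v\mid\afk+\lbangle vax\rbangle$ are exactly $\afk_0+v\afk'$ with $\afk'\in A_{+,N-d}$, and then $\afk+\lbangle vax\rbangle=v(\afk'+\lbangle ax\rbangle)$. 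Those with $v\mid\afk$ are the $v\afk'$. Hence the $v$-divisible parts contribute precisely $\prod_{\afk'\in A_{+,N-d}}(1+\lbangle ax\rbangle/\afk')$, and each degree-$N$ ratio above equals $\prod_{\afk\in A_{+,N}}(\afk+\lbangle vax\rbangle)_{\flat}/\afk_{\flat}$.
- In degrees $<d$, the only such factor is $\afk_0+\lbangle vax\rbangle=v\lbangle ax\rbangle$, present exactly when $\afk_0$ is monic. It is matched by $(v\partial_v\lbangle vax\rbangle)^{-\langle-\lbangle vax\rbangle_{\circ}/v\rangle_{\geo}}$ in \eqref{eqn: normal-v-gamma}.

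Finally, rescaling the basis cannot stand in for this computation. Let $\ell$ be the order of $v$ modulo $\nfk$, so $\sigma_v^{\ell}$ is trivial on $K_{\unfk}$.
- Replacing $\omega^x_{\ua}$ by $c_{\ua}\omega^x_{\ua}$ with $c_{\ua}\in K_{\unfk}^{\times}$ changes the constant only by $\sigma_v(c_{\ua})/c_{\uv\ua}$. It leaves the eigenvalue of $\sigma_{v,\cris}^{\ell}$ on each line unchanged.
- Multiplying by $\zeta\in\overline{\FF}_q^{\times}\setminus\FF_q^{\times}$ leaves $H_{\dR}(M_x,K_{\unfk})$, because $\overline{\FF}_q\cap K_{\unfk}=\FF_q$.
- You have not shown that the discrepancy between your $c_{\ua}$ and $\Gamma_v^{\geo,*}\lbangle vax\rbangle^{-1}$ is algebraic at all, and that is the real content of the theorem.
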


Thus the normalized $v$-adic geometric special gamma values appear directly as $v$-isocrystal periods of soliton $t$-motives.

In particular, let $\ell$ be the order of $\uv \bmod \unfk \in (\uA/\unfk)^{\times}$.
The Gross--Koblitz--Thakur formula established by Chang~\cite{Ch25} then gives (see Theorem~\ref{thm: G-K-T-formula-geo})
\[
\prod_{i=0}^{\ell-1}\Gamma_v^{\geo,*}\lbangle v^ix\rbangle^{-1} = G_{\ell}^{\geo}(x),
\]
where $G_{\ell}^{\geo}(x)$ is the so-called \emph{geometric Gauss sum} (which lies in $\bar{k} \cap k_v^{\times}$).
As a consequence, we obtain the following period interpretation of geometric Gauss sums (see Remark~\ref{rem: period-Gauss-vmidn}):

\begin{cor}\label{cor: G-period}
Let $M_{\boxx_v}$ be the general soliton $t$-motive associated with the formal sum $\boxx_v\assign \sum_{i=0}^{\ell-1}[v^{i}x]$ (constructed in Section~\ref{sec: M-boxx}).
There exists an eigenbasis $\{\omega_{\ua}^{\boxx_v}\mid \ua \in (\uA/\unfk)^{\times}\}$ for $H_{\dR}(M_{\boxx_v},K_{\unfk})$ with respect to the $\uK_{\unfk}$-action satisfying that for every $\sigma_v \in \Wcal_v$ with $\deg \sigma_v=1$, we have 
\[
\sigma_{v,\cris}(\omega_{\ua}^{\boxx_v}) = G_{\ell}^{\geo}(ax) \cdot \omega_{\uv \ua}^{\boxx_v}, \quad \forall \ua \in (\uA/\unfk)^{\times}\quad (\text{here $a = \ua(\theta)$}).
\]  
\end{cor}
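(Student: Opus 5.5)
The plan is to derive the corollary from Theorem~\ref{thm: MT3}, applied to each soliton $t$-motive $M_{v^ix}$ for $0\le i\le \ell-1$, together with the Gross--Koblitz--Thakur formula (Theorem~\ref{thm: G-K-T-formula-geo}).

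First I would describe the general soliton $t$-motive $M_{\boxx_v}$ attached to $\boxx_v=\sum_{i=0}^{\ell-1}[v^ix]$. It should be built from the product of the Coleman functions $g_{v^ix}$, and it should be realized as (a sub/quotient of, or isogenous to) the tensor product $M_{x}\otimes M_{vx}\otimes\cdots\otimes M_{v^{\ell-1}x}$ over $K_{\unfk}\otimes \uK_{\unfk}$. Here the tensor product is formed over the CM field, so the $\uK_{\unfk}$-eigenlines are tensored componentwise. Concretely, I would set
\[
\omega^{\boxx_v}_{\ua}\assign \omega^{x}_{\ua}\otimes\omega^{vx}_{\ua}\otimes\cdots\otimes\omega^{v^{\ell-1}x}_{\ua},
\]
using the eigenbases of Theorem~\ref{thm: MT3}. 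Each factor lies in the $\xi_{\ua}$-eigenline, so this is again an eigen-differential. I would then check that these elements form a basis of $H_{\dR}(M_{\boxx_v},K_{\unfk})$, which is free of rank one over $K_{\unfk}\otimes\uK_{\unfk}$.

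The key step is the compatibility of the crystalline action with this tensor construction. The comparison isomorphism $\Phi_M$ of Hartl--Kim is functorial and tensor-compatible, as is the $\tau^{\deg v}$-action on the reductions, and the good models $\Mcal_{v^ix,w}$ tensor together to a good model of $M_{\boxx_v}$. It follows that $\sigma_{v,\cris}$ acts on the tensor product factorwise. Applying Theorem~\ref{thm: MT3} to $v^ix$ with index $\ua$ gives
\[
\sigma_{v,\cris}(\omega^{v^ix}_{\ua})=\Gamma_v^{\geo,*}\lbangle v^{i+1}ax\rbangle^{-1}\,\omega^{v^ix}_{\uv\ua}.
\]
Taking the product over $i$ then yields
\[
\sigma_{v,\cris}(\omega^{\boxx_v}_{\ua})=\prod_{i=1}^{\ell}\Gamma_v^{\geo,*}\lbangle v^{i}ax\rbangle^{-1}\,\omega^{\boxx_v}_{\uv\ua}.
\]

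To close, I would use that $v^\ell\equiv 1 \bmod \nfk$, so that $\lbangle v^\ell ax\rbangle=\lbangle ax\rbangle$; the bracket only depends on $ax$ modulo $A$, or in the normalization of \eqref{eqn: normal-v-gamma} this follows from the translation formula. The product above is therefore a cyclic reindexing of $\prod_{i=0}^{\ell-1}\Gamma_v^{\geo,*}\lbangle v^i ax\rbangle^{-1}$, which equals $G_\ell^{\geo}(ax)$ by Theorem~\ref{thm: G-K-T-formula-geo}, applied with $ax\in\nfk^{-1}A\setminus A$ (note $\gcd(a,\nfk)=1$).

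The main obstacle I expect is justifying the identification of $M_{\boxx_v}$ with the tensor product. This means checking that the construction in Section~\ref{sec: M-boxx} is compatible with the Coleman-function description, and that the de~Rham, crystalline and comparison structures, including the explicit form of Proposition~\ref{prop: sp-iso-x}, are multiplicative. If an honest isomorphism is unavailable, I would instead redo the computation of Proposition~\ref{prop: sp-iso-x} directly for $M_{\boxx_v}$ with the function $\prod_i g_{v^ix}$. That computation produces the product of the individual factors, since each factor in that proof arises from evaluating the corresponding Coleman function along Frobenius twists.
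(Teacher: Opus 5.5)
Your proposal is correct and follows essentially the paper's route. The paper obtains the corollary in Remark~\ref{rem: period-Gauss-vmidn} by applying Theorem~\ref{thm: period-vgamma-boxx} to $\boxx_v$ and then invoking the Gross--Koblitz--Thakur formula (Theorem~\ref{thm: G-K-T-formula-geo}); that theorem itself rests on viewing $\Mcal_{\boxx}$ as the tensor product of the $\Mcal_{x}$'s (or on redoing the computation with $g_{\boxx}$), which is exactly your factor-by-factor argument. Your cyclic reindexing via $\lbangle v^{\ell}ax\rbangle=\lbangle ax\rbangle$ is the step the paper leaves implicit in its remark $G^{\geo}_{\ell}(vx)=G^{\geo}_{\ell}(x)$; it holds directly from the definition of $\Gamma_v^{\geo,*}\lbangle\cdot\rbangle$, so no translation formula is needed.
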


\subsection{Proof strategy for the main theorem}

With the period interpretation of $v$-adic geometric special gamma values established, our main theorem follows by combining the theory of complex multiplication with the isogeny theorem proved in \cite{BCPW22}.
We briefly outline the argument below and refer the reader to Section~\ref{sec: v-CS} for the details.

Let $\unfk \in \uA$ be monic with $\uv \nmid \unfk$ as before.
For each $x \in \nfk^{-1}A\setminus A$, the soliton $t$-motive $M_x$ is actually a CM $t$-motive with CM type $(\uK_{\unfk},\Xi_x)$ over $K_{\unfk}$ (see Example~\ref{ex: CM-t-mot}~(2)).
Given a CM $t$-motive $M$ with CM type $(\uK,\Xi)$ over $\bar{k}$, where $\uK$ is a subextension of $\uK_{\unfk}$ over $\uk$, the \emph{inflation} $\Inf_{\uK}^{\uK_{\unfk}}(M)$ is a CM $t$-motive with CM type $(\uK_{\unfk},\Inf_{\uK}^{\uK_{\unfk}}(\Xi))$ (see Proposition~\ref{prop: CM}~(2)).
The recipe developed in \cite[Section 6.3]{Wei26} provides nonnegative integers $h$, $n$, and $n'_{\ucfk}(\varrho,\ua)$ (given in \eqref{eqn: h'-x-xi-ell}, \eqref{eqn: CM-type-n} and \eqref{eqn: m-c}) so that the following equality of generalized CM types holds:
\begin{equation}\label{eqn: eq-CM-type}
h\Inf_{\uK}^{\uK_{\unfk}}(\Xi) + n \Inf_{\uk}^{\uK_{\unfk}}(\xi_0)
= \sum_{\varrho \in G_{\uK}} \sum_{1\neq \ucfk|\unfk}
\sum_{\subfrac{\ua \in \uA,\, \deg \ua< \deg \ucfk}{\gcd(\ua,\ucfk)=1}}n'_{\ucfk}(\varrho,\ua)\Xi_{a/\cfk}.
\end{equation}
Here $\xi_0:\uk \stackrel{\sim}{\rightarrow} k\subset \bar{k}$ is the $\FF_q$-algebra embedding sending $t$ to $\theta$ (and $(\uk,\xi_0)$ is the CM type of the \emph{Carlitz $t$-motive} $M_C$, see Example~\ref{ex: CM-t-mot}~(1)).
Let $M_{\boxx_{\Xi,n}}$ be the generalized soliton $t$-motive associated with the formal sum 
\[
\boxx_{\Xi,n}\assign \sum_{\varrho \in G_{\uK}}\sum_{1\neq \ucfk|\unfk}
\sum_{\subfrac{\ua \in \uA,\, \deg \ua< \deg \ucfk}{\gcd(\ua,\ucfk)=1}}n'_{\ucfk}(\varrho,\ua)[a/\cfk],
\]
and denote $M^{\otimes_{\uK}h}(n)$ to be the  \emph{$n$-th Tate twist} (see \eqref{eqn: Tate-twist}) of the tensor power $M^{\otimes_{\uK}h}$ (given in \eqref{eqn: M-tensor-power}). 
Then \eqref{eqn: eq-CM-type} says that $M_{\boxx_{\Xi,n}}$ and $\Inf_{\uK}^{\uK_{\unfk}}(M^{\otimes_{\uK}h}(n))$ have the same CM type.
By the isogeny theorem stated in Theorem~\ref{thm: isog-thm}, we obtain a $\uK_{\unfk}$-isogeny $f: M_{\boxx_{\Xi,n}} \rightarrow \Inf_{\uK}^{\uK_{\unfk}}(M^{\otimes_{\uK}h}(n))$ over $\bar{k}$, which induces a $\uK_{\unfk}$-isomorphism 
\[
f_{\dR}:H_{\dR}(M_{\boxx_{\Xi,n}},\bar{k}_v)\stackrel{\sim}{\rightarrow} H_{\dR}\big(\Inf_{\uK}^{\uK_{\unfk}}(M^{\otimes_{\uK}h}(n)),\bar{k}_v\big).
\]

Now, for each $\varrho_0 \in G_{\uK}$, choose $\ua_0 \in (\uA/\unfk)^{\times}$ so that $\varrho_{\ua_0}\big|_{\uK} = \varrho_0$.
Composing with the \emph{trace morphism} $\tr_{\uK_{\unfk}/\uK}: \Inf_{\uK}^{\uK_{\unfk}}(M^{\otimes_{\uK}h}(n)) \rightarrow M^{\otimes_{\uK}h}(n)$ (given in \eqref{eqn: trace-morphism}), we show that there exists an eigen-differential $\omega^M_{\varrho_0} \in H_{\dR}(M,\bar{k})$, unique up to multiplication by roots of unity, such that
\[
(\tr_{\uK_{\unfk}/\uK}\circ f)_{\dR}(\omega_{\ua_0}^{\boxx_{\Xi,n}}) = (\omega_{\varrho_0}^M)^{\otimes_{\uK}h} \otimes \omega_C^{\otimes n} \quad \in H_{\dR}(M^{\otimes_{\uK}h}(n),\bar{k}).
\]
Here $\omega_C \in H_{\dR}(M_C,k)$ is the differential given in Example~\ref{sec: Ex-Car-t}.
Finally, the morphism
$(\tr_{\uK_{\unfk}/\uK}\circ f)_{\dR}: H_{\dR}(M_{\boxx_{\Xi,n}},\bar{k}_v)\rightarrow H_{\dR}\big(M^{\otimes_{\uK}h}(n),\bar{k}_v\big)$
is equivariant with respect to the crystalline action (see \eqref{eqn: equivariance}).
Combining this equivariance with the explicit crystalline-action formula for soliton $t$-motives obtained from Theorem~\ref{thm: MT3}, and then taking the $h$-th root, yields the formula stated in Theorem~\ref{thm: MT1}.

\subsection{Transcendental aspects}\label{sec: Trans}

The preceding formulas naturally lead to questions concerning the algebraic relations among $v$-isocrystal periods and $v$-adic special gamma values.
Let $M$ be a $t$-motive over $\bar{k}\subset\bar{k}_v$.
Then there exists a finite extension $F_0$ of $k$ in $\bar{k}$ such that $M$ is obtained from a $t$-motive $M_0$ over $F_0$ by base change.
Let $F$ be the compositum of $F_0$ and $k_v$ inside $\bar{k}_v$, which is a finite extension of $k_v$.
We may therefore regard $M$ as a $t$-motive over $F$.

Assume that $M$ has good reduction over $F$, and let $\beta$ be an $F_0$-basis of $H_{\dR}(M_0,F_0)$.
In Remark~\ref{rem: c-d-period-matrix}, we introduce the \emph{de~Rham--crystalline comparison period matrix}
$[\Phi_M]_{\beta}$ of $M$ with respect to $\beta$.
Its entries are expected to involve transcendental elements over $k$ (see Example~\ref{sec: Ex-Car-t} and Remark~\ref{rem: RC}).
Furthermore, by Remark~\ref{rem: matrix P-Phi}, the associated $v$-isocrystal period matrix
$\Pcal_{\beta}^M(\sigma)$, for $\sigma\in\Wcal_v$, is obtained by semilinear conjugation of the algebraic crystalline-action matrix with the comparison period matrix $[\Phi_M]_{\beta}$.
This provides a conceptual explanation for viewing the entries of $\Pcal_{\beta}^M(\sigma)$ as $v$-adic periods.
Consequently, the study of algebraic relations among $v$-adic geometric special gamma values naturally fits into the broader framework of the transcendence theory of de~Rham--crystalline comparison periods.

Note that a $v$-adic Lang--Rohrlich conjecture for arithmetic gammas was derived in \cite{CWY26} by showing a $v$-adic analogue of Papanikolas's theorem on the Grothendieck period conjecture (stated in \eqref{eqn: v-G-conj})
for restrictions of scalars of Carlitz $t$-motives, using a trick of `switching $v$ and $\infty$'.
Suppose \eqref{eqn: v-G-conj} also holds for the soliton $t$-motives $M_x$ where $x\in k\setminus A$.
Together with the known description of the motivic Galois groups in the $\infty$-adic setting, it would determine the transcendence degree of the field generated by the entries of the comparison period matrix $[\Phi_{M_x}]_{\beta}$ over $\bar{k}$, which would in turn determine all algebraic relations among the special values of Thakur's $v$-adic geometric gamma function.

Motivated by this perspective, we formulate in Conjecture~\ref{rem: v-LR-conj} a $v$-adic analogue of the Lang--Rohrlich conjecture for geometric gamma values.
A systematic investigation of this conjecture lies beyond the scope of the present paper.

\subsection{Contents}

The paper is organized as follows.
Section~\ref{sec: Pre} establishes the basic notation and conventions used throughout the paper. Then we recall the definitions of Drinfeld modules in Section~\ref{sec: DM} and the basic properties of the Carlitz module and Carlitz cyclotomic fields in Section~\ref{sec: cyclotomic-field}. Section~\ref{sec: t-mot} gives a brief review of $t$-motives, including Gardeyn's notion of good reduction.

Section~\ref{sec: dR-cry} is devoted to the de~Rham--crystalline comparison theory of Hartl--Kim for $t$-motives. We review the de~Rham and crystalline realizations in Sections~\ref{sec: dR} and~\ref{sec: cry}, respectively, and then recall the comparison isomorphism in Section~\ref{sec: comp}. The crystalline action of the Weil group on de~Rham realizations is introduced in Section~\ref{sec: cris-act}.

Section~\ref{sec: Sol} studies soliton $t$-motives. We first recall Anderson's Coleman functions  in Section~\ref{sec: Col}.
Then we give an explicit description of the de~Rham--crystalline comparison isomorphism for soliton $t$-motives in Section~\ref{sec: v-period}, together with a concrete formula for the crystalline Weil action. Section~\ref{sec: v-gamma} discusses fundamental properties of Thakur's $v$-adic geometric gamma function, and Section~\ref{sec: v-gamma-period} establishes its period interpretation by proving Theorem~\ref{thm: MT3}.

Finally, our $v$-adic Chowla--Selberg formula is established in Section~\ref{Sec: v-CS}. After reviewing the definitions and basic properties of CM $t$-motives in Section~\ref{sec: CM}, we prove Theorems~\ref{thm: MT1} and~\ref{thm: MT2} in Section~\ref{sec: v-CS}. We conclude by formulating a $v$-adic analogue of the Lang–Rohrlich conjecture for geometric gamma values at the end.

\section{Preliminaries}
\label{sec: Pre}

\subsection{Basic settings}\label{Sec: Basic-set}
Let $p$ be a prime number and $q$ be a power of $p$. Let $\FF_q$ be a finite field with $q$ elements, and $A=\FF_q[\theta]$ denotes the polynomial ring with one variable $\theta$ over $\FF_q$.
Put $k=\FF_q(\theta)$, the fraction field of $A$.
Let $|\cdot|_{\infty}$ be the absolute value on $k$ associated with the ``degree valuation'', i.e.~
\[
\left|\frac{a}{b}\right|_{\infty}\assign q^{\deg a - \deg b} \quad \text{for every $a,b \in A$ with $b\neq 0$}.
\]
Given $x \in k$, let $\lbangle x \rbangle$ be the fractional part of $x$, i.e.~$\lbangle x \rbangle$ is the unique element in $k$ with $|\lbangle x \rbangle|_{\infty}<1$ and $x-\lbangle x \rbangle \in A$.
Then $\lbangle x+a \rbangle = \lbangle x \rbangle$ for every $x \in k$ and $a \in A$.

Let $A_+$ be the set of monic polynomials in $A$.
Fix an irreducible polynomial $v \in A_+$ once and for all, which corresponds to an absolute value $|\cdot|_v$ on $k$ given by
\[
\left| \frac{a}{b}\right|_v \assign q^{\deg v \cdot (\ord_v(b)-\ord_v(a))}, \quad \forall a,b \in A \text{ with } b \neq 0.
\]
Denote by $k_v$ the completion of $k$ with respect to $|\cdot|_v$ and $\CC_v$ the completion of a chosen algebraic closure $\ok_v$ of $k_v$.
We extend $|\cdot|_v$ to a unique absolute value on $\CC_v$.
Let $\bar{k}$ be the algebraic closure of $k$ in $\CC_v$. 

\subsection{Drinfeld modules}
\label{sec: DM}

Given an $A$-subalgebra $R$ of $\CC_v$,
let $\GG_{a/R}$ be the additive group over $R$.
Denote by $\End_{\FF_q}(\GG_{a/R})$ the $\FF_q$-linear endomorphism ring of $\GG_{a/R}$ over $R$, i.e.
\[
\End_{\FF_q}(\GG_{a/R}) = \bigg\{
\sum_{i=0}^m a_i \rmx^{q^i} \ \bigg|\  a_i \in R
\bigg\}.
\]
Let $\uA\assign \FF_q[t]$ be the polynomial ring with another variable $t$ over $\FF_q$ (i.e.~$t$ is transcendental over $\CC_v$). 
Given a positive integer $r$, a \emph{Drinfeld module of rank $r$ over $R$} is a pair $\uE_{\rho} = (\GG_{a/R}, \rho)$, where $\rho: \uA \rightarrow \End_{\FF_q}(\GG_{a/R})$ is an $\FF_q$-algebra homomorphism satisfying
\[
\rho_t(\rmx) = \theta \rmx +a_1 \rmx^q+\cdots + a_r \rmx^{q^r} \quad \text{ with $a_r \in R^\times$.}
\]

Let $\uE_{\rho}$ and $\uE_{\rho'}$ be two Drinfeld modules over $R$.
A \emph{morphism} $f: \uE_{\rho} \rightarrow \uE_{\rho'}$ is an $\FF_q$-linear endomorphism $f \in \End_{\FF_q}(\GG_{a/R})$ satisfying
\[
f \circ \rho_{\ua} = \rho'_{\ua} \circ f, \quad \forall \ua \in \uA.
\]
Let
$\End(\uE_{\rho/R})
\assign \big\{\text{morphism } f: \uE_{\rho} \rightarrow \uE_{\rho} \text{ over $R$}\big\}$,
called the \emph{endomorphism ring of $\uE_{\rho}$ over $R$}, and put $\End(\uE_{\rho})\assign \End(\uE_{\rho/\CC_v})$.
For every
\[
f(\rmx) = c_0(f)\rmx+ \cdots + c_{\ell}(f) \rmx^{q^{m}} \quad \in \End_{\FF_q}(\GG_{a/R}),
\]
we write $\partial f \assign c_0(f) \in R$ and let $\iota: \uA\stackrel{\sim}{\rightarrow} A$ be the $\FF_q$-algebra isomorphism with $\iota(t) = \theta$.
Then $\partial: \End(\uE_{\rho})\rightarrow \CC_v$ is an $\uA$-algebra embedding satisfying $\partial \rho_{\ua} = \iota(\ua)$ for every $\ua \in \uA$.

It is known that $\End(\uE_{\rho})$ is a commutative $\uA$-algebra which is free as an $\uA$-module with $\rank_{\uA}(\End(\uE_{\rho}))\leq r$ where $r$ is the rank of $\uE_{\rho}$ (see \cite[(4.10) Remark]{DH87}).
We call $\uE_{\rho}$ a \emph{CM Drinfeld module} if 
$\rank_{\uA}(\End(\uE_{\rho})) = r$.

\subsection{Carlitz polynomials and cyclotomic function fields}
\label{sec: cyclotomic-field}

Recall that the \emph{Carlitz module} (over $A$) is the rank one Drinfeld module $\uC \assign (\GG_{a/A},C)$ defined by $C_t(\rmx) = \theta \rmx + \rmx^q$. 
Given a monic polynomial $\unfk \in \uA$, the group of \emph{Carlitz $\unfk$-torsions} 
\[
\uC[\unfk]\assign \big\{\lambda \in \bar{k}\ \big|\ C_{\unfk}(\lambda) = 0\big\}
\]
is equipped with an $\uA$-module structure given by $\ua \cdot \lambda\assign C_{\ua}(\lambda)$ for every $\ua \in \uA$ and $\lambda \in \uC[\unfk]$. It is known that $\uC[\unfk]\cong \uA/\unfk \uA$.
The \emph{$\unfk$-th Carlitz cyclotomic function field over $k$} is $K_{\unfk}\assign k(\uC[\unfk])\subset \bar{k}$, which is a finite abelian extension of $k$ with the Galois group $\Gal(K_{\unfk}/k)$ isomorphic to $(\uA/\unfk \uA)^{\times}$ via the Artin map (see \cite[Theorem 12.8]{Ros02} and \cite[Corollary 2.5]{Hay74}):
\begin{equation}\label{eqn: Artin-map}
(\uA/\unfk \uA)^{\times} \overset{\sim}{\longrightarrow} \Gal(K_{\unfk}/k), \quad 
(\ua \bmod \unfk) \longmapsto \varsigma_{\ua}\assign (\lambda \mapsto C_{\ua}(\lambda)), \quad  \lambda \in \uC[\unfk].
\end{equation}

Let $\lambda_{\unfk}$ be a primitive element in $\uC[\unfk]$, i.e.
\[
\uC[\unfk] = \{C_{\ua}(\lambda_{\unfk})\mid \ua \in \uA\}.
\]
Then the irreducible polynomial of $\lambda_{\unfk}$ over $k$ is called the  \emph{$\unfk$-th Carlitz  cyclotomic polynomial} and denoted by $C_{\unfk}^*(\rmx) \in A[\rmx]$.
Let $O_{\unfk}$ be the integral closure of $A$ in $K_{\unfk}$. Then (see \cite[Proposition 12.9]{Ros02}) 
\[
O_{\unfk} = A[\lambda_{\unfk}] \cong \frac{A[\rmx]}{(C_{\unfk}^*(\rmx))}.
\]

Let $\uA_+$ be the set of monic polynomials in $\uA$.
For convention, we fix a set of compatible generators $\{\lambda_{\unfk}\mid \unfk \in \uA_+\}$ once and for all (i.e.~$C_{\umfk}(\lambda_{\unfk}) = \lambda_{\unfk/\umfk}$ for every $\umfk,\unfk \in \uA_+$ with $\umfk \mid \unfk$).


\subsection{\texorpdfstring{$t$}{t}-motives and their models}\label{sec: t-mot}

Let $\Rcal$ be an $\FF_q$-algebra together with an $\FF_q$-algebra injective endomorphism $(h\mapsto h^{(1)})$ on $\Rcal$. The twisted polynomial ring $\Rcal[\tau]$ is defined by the following multiplication law:
\[
\tau h = h^{(1)} \tau, \quad \forall h \in \Rcal.
\]

Now, let $R$ be an $\FF_q$-subalgebra of $\CC_v$ and define the \emph{$q$-Frobenius twisting map on $R[t]$} by
\[
(\sum_{i=0}^n c_i t^i)^{(1)}\assign \sum_{i=0}^n c_i^{q} t^i, \quad \forall \sum_{i=0}^n c_i t^i \in R[t].
\]
We put $R[t,\tau]\assign R[t][\tau]$, which is the twisted polynomial ring satisfying
\[
t c = c t, \quad t \tau = \tau t, \quad \tau c = c^q \tau, \quad \forall c \in R.
\]
We naturally regard $R[t]$ and $R[\tau]$ as subrings of $R[t,\tau]$.

\begin{defn}\label{defn: t-mot}
Let $F$ be a field extension of $k$.
A \emph{$t$-motive over $F$} is an $F[t,\tau]$-module $M$ which is free of finite rank over $F[t]$ and satisfies
\begin{equation}\label{eqn: *-con}
(t-\theta)^n M \subset F \cdot (\tau M), \quad \text{ for }  n \gg 0.
\end{equation}
\end{defn}

\begin{rem}
${}$
\begin{itemize}
    \item[(1)]
    Our $t$-motives here coincide with Hartl--Juschka's `effective' $t$-motive (over $F$) in \cite{HJ20}.
    \item[(2)] The inclusion \eqref{eqn: *-con} implies in particular that the map $\tau: M \rightarrow M$ is injective, showing that our $t$-motives are `$\tau$-modules over $F[t]$' introduced by Gardeyn in \cite{Gar03}.
    \item[(3)] Suppose $F$ is perfect. Then one has $F\cdot (\tau M) = \tau M$.
    Furthermore, if $M$ is finitely generated over $F[\tau]$, then our definition of $t$-motives agrees with Anderson's in \cite{A86}.
\end{itemize}
\end{rem}

Let $M$ be a $t$-motive over $F$ with $k\subset F\subset \CC_v$. 
For each subextension $L$ of $F$ in $\CC_v$, the \emph{base change of $M$ to $L$} is defined to be $M_{/L}\assign L\otimes_F M$, which is a $t$-motive over $L$ under the natural left $L[t,\tau]$-module structure.

Let $M$ and $M'$ be $t$-motives over $F$.
For each extension $L$ of $F$, a \emph{morphism $f:M\rightarrow M'$ over $L$} is simply an $L[t,\tau]$-module homomorphism from $M_{/L}$ to $M'_{/L}$.
We call $f$ an \emph{isogeny over $L$} if $f$ is injective with finite dimensional cokernel over $L$.
In this case, it is known (by \cite[Proposition 3.1.2]{Tael09} and \cite[Remark 2.3.2 (d)]{HJ20}) that there exist $\ua \in \FF_q[t]$ and an isogeny $g:M'\rightarrow M$ such that 
\[
g\circ f(m) = \ua \cdot m,\quad 
f\circ g (m') = \ua \cdot m', \quad \forall m \in M,\ m' \in M'.
\]
We say that $M$ and $M'$ are \emph{isogenous over $L$} if there exists an isogeny $f:M\rightarrow M'$ over $L$.

Let  
$\Hom(M_{/L},M'_{/L})$ consist of all morphisms from $M$ to $M'$ over $L$, and
put
\[
\End(M_{/L})\assign \Hom(M_{/L},M_{/L}),
\]
called the \emph{endomorphism ring of $M$ over $L$}.
It is known that $\End(M_{/L})$ is a free $\FF_q[t]$-module of rank less than or equal to $\rank_{F[t]}(M)^2$
(see \cite[Corollary 1.7.2]{A86} and \cite[Remark 2.3.7(c)]{HJ20}).
Denote
$\Hom(M,M')\assign \Hom(M_{/\CC_v},M'_{/\CC_v})$ and
$\End(M)\assign \End(M_{/\CC_v})$.
In particular, when $M$ and $M'$ are both `rigid analytically trivial' and defined over $\bar{k}$, it is known that $\Hom(M,M') = \Hom(M_{/\bar{k}},M'_{/\bar{k}})$, whence $\End(M) = \End(M_{/\bar{k}})$ (see \cite[Lemma~2.2.7]{Wei26}).

Given two $t$-motives $M_1$ and $M_2$ over $F$, the \emph{tensor product of $M_1$ and $M_2$} is defined to be 
\[
M_1\otimes M_2 \assign M_1 \otimes_{F[t]} M_2,
\quad \text{with}\quad 
\tau\cdot (m_1\otimes m_2)\assign (\tau m_1 \otimes \tau m_2), \quad \forall m_1 \in M_1,\ m_2 \in M_2.
\]
Let $\text{$t$-}\!\mathscr{M}_{\!F}$ be the category of $t$-motives over $F$. Then it is an $\uA$-linear symmetric monoidal preabelian category.

\begin{Subsubsec}[\emph{$t$-motives associated with Drinfeld modules}]

Let $F$ be a field with $k\subset F \subset \CC_v$.
We may identify
\[
F[\tau] \stackrel{\sim}{\longrightarrow} \End_{\FF_q}(\GG_{a/F}), \quad 
\sum_{i=0}^{n}a_i \tau^i \longmapsto \sum_{i=0}^n a_i \rmx^{q^i}, \quad \forall a_0,...,a_n \in F.
\]
Let $\uE_{\rho}$ be a Drinfeld module over $F$.
The \emph{$t$-motive associated with $\uE_{\rho}$} is $M_{\rho}\assign F[\tau]$
whose left $F[t,\tau]$-module structure is extended from the natural left multiplication of $F[\tau]$ and
\[
t \cdot m \assign m \rho_t, \quad \forall m \in M_{\rho}.
\]
Moreover, we may identify $\End(\uE_{\rho/F})$ with $\End(M_{\rho/F})$ by sending $f \in \End(\uE_{\rho/F})$ to 
\[
\tilde{f}: M_{\rho}\rightarrow M_{\rho}
\quad \text{ with }\quad 
\tilde{f}(m)\assign m f, \quad \forall m \in M_{\rho} = F[\tau].
\]

In particular, let $M_{C}$ be the $t$-motive (over $F$) associated with the Carlitz module $\uC$.
We may identify $M_C$ with $F[t]$ equipped with the $\tau$-action defined by (see \cite[Example 2.3.6]{HJ20}
\[
\tau\cdot f(t) = (t-\theta)\cdot f^{(1)}(t), \quad \forall f(t) \in F[t].
\]
For every $t$-motive $M$ and a non-negative integer $n$, the \emph{$n$-th Tate twist of $M$} is defined by
\begin{equation}\label{eqn: Tate-twist}
M(0) \assign M \quad \text{ and } \quad 
M(n) \assign M \otimes M_{C}^{\otimes n} \quad \text{ for $n>0$.}
\end{equation}
\end{Subsubsec}

\begin{Subsubsec}[\emph{Models and good reductions}]
\label{sec: good-red}

Let $M$ be a $t$-motive over $F$, where $F$ is a finite extension of $k_{v}$ in $\CC_{v}$.
Denote by $O_{F}$ the ring of integers in $F$ and $\pfk_F$ the unique maximal ideal of $O_{F}$.
Let $\FF_F\assign O_{F}/\pfk_F$, the residue field of $F$, which is naturally regarded as a subfield of $O_{F}$ under the Teichm\"uller embedding
 $\eT: \FF_F \hookrightarrow O_F$ 
given as follows:
\begin{equation}\label{eqn: Teichmuller}
\FF_F= \frac{O_F}{\pfk_F}\ni \ c\bmod \pfk_F \ \longmapsto \ \lim_{n\rightarrow\infty } c^{\#(\FF_F)^n} \ \in O_F.
\end{equation}
A \emph{model of $M$} is a left $O_{F}[t,\tau]$-submodule $\Mcal$ of $M$ which is locally-free of finite rank over $O_{F}[t]$ and satisfies $F \Mcal = M$.
By Seshadri's theorem (i.e.~Quillen--Suslin theorem over a principal ideal domain, see \cite{Qui76}, \cite{Sus76} and \cite[Ch.~5, Theorem 2.7]{Lam78}), one has that $\Mcal$ must be free over $O_F[t]$.
Define $\overline{\Mcal} \assign \Mcal / \pfk_F \Mcal$, which is naturally equipped with a left $\FF_F[t,\tau]$-module structure.

\begin{defn}\label{defn: model}
Keep the above notation. The model $\Mcal$ of $M$ is \emph{good} if the map $\tau: \overline{\Mcal} \rightarrow \overline{\Mcal}$ is injective.
We say that \emph{$M$ has good reduction} if there exists a good model of $M$.
\end{defn}

\begin{rem}\label{rem: good-model}
Let $M$ be a $t$-motive over a finite extension $F$ of $k_{v}$ in $\CC_{v}$ as above.
\begin{itemize}
    \item[(1)] It can be seen that a model $\Mcal$ of $M$ is good if and only if there exists a sufficiently large integer $n$ such that
    \[
    (t-\bar{\theta})^n \overline{\Mcal} \subset \tau \overline{\Mcal}.
    \]
    To see this, let $\{m_1,...,m_r\}$ be an $O_F[t]$-base for $\Mcal$. There exists $U \in \Mat_r(O_F[t])$ so that 
    \[
    \begin{pmatrix}
        \tau m_1 \\ 
        \vdots \\
        \tau m_r
    \end{pmatrix}
    = U \cdot \begin{pmatrix}
        m_1 \\ \vdots \\ m_r
    \end{pmatrix}.
    \]
    The condition~\eqref{eqn: *-con} implies that there exists a sufficiently large integer $n$ and $c \in O_F$ such that $\det(U) = c(t-\theta)^n$.
    Then $\tau: \overline{\Mcal}\rightarrow \overline{\Mcal}$ is injective if and only if $c \in O_F^{\times}$, which is equivalent to $(t-\theta)^n\overline{\Mcal} \subset \tau \overline{\Mcal}$ for $n\gg 0$.
    \item[(2)] Suppose $M$ has good reduction.
    Then $M$ has a unique good model up to a unique isomorphism, see \cite[Proposition 2.13~(i)]{Gar03}.
    \item[(3)] Given a morphism $f: M_1\rightarrow M_2$ of $t$-motives over $F$, suppose $M_1$ and $M_2$ both have good reduction (with good models $\Mcal_1$ and $\Mcal_2$, respectively). Then $f$ can be `extended' to an $O_F[t,\tau]$-module homomorphism $f_0:\Mcal_1\rightarrow \Mcal_2$, see \cite[Proposition 2.13~(ii)]{Gar03}.
\end{itemize} 
\end{rem}

Let $L$ be a finite extension of $k$ in $\bar{k}\subset \CC_v$, and let $w$ be the place of $L$ corresponding to $|\cdot|_v$ restricting to $L$.
Let $L_w$ be the closure of $L$ in $\CC_{v}$, regarding as the completion of $L$ at $w$.
We say that a $t$-motive $M$ over $L$ \emph{has good reduction at $w$} if $M_{/L_w}$ has a good model.

\end{Subsubsec}

\section{de~Rham and crystalline realizations of \texorpdfstring{$t$}{t}-motives}\label{sec: dR-cry}

In this section, we recall the de~Rham and crystalline realizations of $t$-motives introduced in \cite{HK20} and their comparison isomorphism.

\subsection{de~Rham module of \texorpdfstring{$t$}{t}-motives}\label{sec: dR}

Let $M$ be a $t$-motive over a field $F$ with $k\subset F \subset \CC_v$.
For each field $L$ with $F\subset L \subset \CC_v$, the \emph{de Rham module of $M$ over $L$} is given by
\[
H_{\dR}(M,L):= \frac{L\cdot \tau M_{/L}}{(t-\theta) \big(L\cdot \tau M_{/L}\big)}.
\]

It is clear that $L\otimes_F H_{\dR}(M,F)\cong H_{\dR}(M,L)$. Moreover, we have the following identification (cf.~\cite[Lemma 3.4.4]{CWY26}):
\begin{lem}\label{lem: dR-id}
Keep the above notation. For each positive integer $\ell$,
the natural inclusion $\tau^{\ell} M\subset \tau M$ induces the following isomorphism
\[
\frac{L\cdot \tau^{\ell} M_{/L}}{(t-\theta)\big(L\cdot \tau^{\ell} M_{/L}\big)} \overset{\sim}{\longrightarrow}
\frac{L\cdot \tau M_{/L}}{(t-\theta) \big(L\cdot \tau M_{/L}\big)} = H_{\dR}(M,L).
\]  
\end{lem}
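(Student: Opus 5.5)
By induction on $\ell$, it suffices to show that for each $j\geq 1$ the inclusion $N_{j+1}\subset N_j$ induces an isomorphism
\[
N_{j+1}/(t-\theta)N_{j+1}\xrightarrow{\sim} N_j/(t-\theta)N_j,
\qquad N_j\assign L\cdot\tau^j M_{/L}.
\]
Composing these for $j=1,\dots,\ell-1$ then gives the statement. Throughout, replace $M$ by $M_{/L}$ and drop $L$ from the notation.

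First I need each $N_j$ to be a free $L[t]$-module of the same rank $r$ as $M$, with $N_{j+1}\subset N_j$ of finite $L$-codimension. Here $N_j$ is the $L$-span, hence the $L[t]$-submodule, generated by $\tau^jM$; indeed $t$ commutes with $\tau$. It is finitely generated and contained in $M$, hence free over the PID $L[t]$. From \eqref{eqn: *-con} one gets $(t-\theta)^n M\subset N_1$. Applying $\tau^j$ and using that $\tau$ commutes with $t$ and $\tau^j(\theta m)=\theta^{q^j}\tau^j m$, this yields
\[
(t-\theta^{q^j})^n N_j\subset N_{j+1}.
\]
Hence every quotient $N_j/N_{j+1}$ is a torsion $L[t]$-module annihilated by a power of $(t-\theta^{q^j})$, and all the $N_j$ have rank $r$.

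Next, $N_j/(t-\theta)N_j$ and $N_{j+1}/(t-\theta)N_{j+1}$ are both $L$-vector spaces of dimension $r$, since the modules are free of rank $r$. It therefore suffices to prove injectivity, or equivalently $N_{j+1}\cap(t-\theta)N_j=(t-\theta)N_{j+1}$. Take $x=(t-\theta)y\in N_{j+1}$ with $y\in N_j$. Then the image $\bar y\in N_j/N_{j+1}$ is killed by $(t-\theta)$. That quotient is annihilated by $(t-\theta^{q^j})^n$, and $(t-\theta)$ and $(t-\theta^{q^j})$ are coprime in $L[t]$ because $\theta\neq\theta^{q^j}$ ($\theta$ is transcendental over $\FF_q$). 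Hence $\bar y=0$, so $y\in N_{j+1}$ and $x\in(t-\theta)N_{j+1}$. This proves injectivity, and surjectivity follows by the dimension count.

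The main obstacle is the Frobenius bookkeeping in the step $(t-\theta^{q^j})^nN_j\subset N_{j+1}$: one must check that twisting by $\tau^j$ turns $t-\theta$ into $t-\theta^{q^j}$. This holds because $\tau^j(t-\theta)m=(t-\theta^{q^j})\tau^jm$. One must also check that taking the $L$-span commutes correctly with $\tau^j$, i.e.\ that $\tau^j(L\tau M)\subset L\tau^{j+1}M$, which holds since $\tau^j c=c^{q^j}\tau^j$. Everything after that is the coprimality argument above.
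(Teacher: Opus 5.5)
Your proof is correct and uses the same key idea as the paper: twist $(t-\theta)^n M\subset L\cdot\tau M$ by $\tau^i$ to get $(t-\theta^{q^i})^n$-torsion quotients, then use that $t-\theta$ is coprime to $t-\theta^{q^i}$. The paper applies one B\'ezout identity to the whole chain $L\cdot\tau^\ell M\subset L\cdot\tau M$ to get surjectivity and simply asserts the kernel, whereas you work step by step, prove injectivity by coprimality and obtain surjectivity from the rank-$r$ dimension count, which also supplies the kernel claim the paper leaves implicit.
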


\begin{proof}
It suffices to prove the case when $L=F$.
The statement clearly holds when $\ell=1$. Suppose $\ell>1$. 
From the definition of $t$-motives one has that
\[
(t-\theta)^n M \subset F\cdot (\tau M) \quad \text{ for sufficiently large integer $n$.}
\]
Multiplying $\tau^{i}$, $1\leq i \leq \ell-1$ on both sides, we get
\[
\prod_{i=1}^{\ell-1}(t-\theta^{q^i})^n \cdot (F\cdot \tau M) \subset F \cdot (\tau^{\ell} M).
\]
Take $a,b \in F[t]$ with 
\[
a(t-\theta)^n + b\prod_{i=1}^{\ell-1}(t-\theta^{q^i})^n = 1.
\]
Then for every $m \in F\cdot \tau M$, 
we have
\[
m = 1\cdot m = (t-\theta)^n \cdot (a m) + \prod_{i=1}^{\ell-1}(t-\theta^{q^i})^n \cdot (bm) \in F\cdot \tau^{\ell} M + (t-\theta) \cdot (F\cdot \tau M).
\]
Hence the inclusion $\tau^{\ell} M \subset \tau M$ induces a surjective homomorphism
\[
F \cdot \tau^{\ell} M \longtwoheadrightarrow \frac{F\cdot \tau M}{(t-\theta)(F\cdot \tau M)} \quad  \text{with kernel $(t-\theta) (F\cdot \tau^{\ell} M)$.}
\]
This shows the desired isomorphism.
\end{proof}

\begin{rem}
Let $\uE_\rho$ be an ``abelian $t$-module'' over $F$.
The de~Rham module of $\uE_\rho$ was first introduced via \lq\lq bi-derivations\rq\rq\ in~\cite{Ge89, Yu90, BP02}. We also refer the readers to \cite[(4.3.3) and (4.3.4)]{NP24} for further details and the compatibility with the de~Rham module $H_{\dR}(M_{\rho},F)$, where $M_{\rho}$ is the $t$-motive associated with $\uE_{\rho}$. 
\end{rem}

Let $f: M_1 \rightarrow M_2$ be a morphism between $t$-motives $M_1$ and $M_2$ over $F$.
As 
\[
f(\tau M_1) = \tau f(M_1) \subset \tau M_2,
\]
we obtain an induced $F$-linear transformation $f_{\dR}: H_{\dR}(M_1,F)\rightarrow H_{\dR}(M_2,F)$.
In particular, $f_{\dR}$ is an isomorphism if $f$ is an isogeny.
Moreover, the natural homomorphism
from $(F\cdot \tau M_1) \otimes_{F[t]} (F\cdot \tau M_2)$ 
to $H_{\dR}(M_1,F)\otimes_F H_{\dR}(M_2,F)$ is surjective and induces a surjective $F$-linear homomorphism from $H_{\dR}(M_1\otimes M_2,F)$ onto $H_{\dR}(M_1,F)\otimes_F H_{\dR}(M_2,F)$. As they have the same dimension, we get a natural isomorphism
\[
H_{\dR}(M_1\otimes M_2,F) \cong H_{\dR}(M_1,F)\otimes_F H_{\dR}(M_2,F).
\]
In conclusion, let $\mathscr{V}_{\!F}$ be the category of finite dimensional vector spaces over $F$. Then the functor
$H_{\dR}(\cdot,F):\text{$t$-}\mathscr{M}_{\!F} \rightarrow \mathscr{V}_{\!F}$ 
given by
\[
M\longmapsto H_{\dR}(M,F), \quad \forall \text{ $t$-motive $M$ \quad and } \quad f\longmapsto f_{\dR}, \quad \forall \text{ morphism $f:M_1\rightarrow M_2$ over $F$},
\]
is a covariant tensor functor.


\subsection{Crystalline module of \texorpdfstring{$t$}{t}-motives}\label{sec: cry}

Recall that $v$ is a fixed monic irreducible polynomial in $A$.
Put $d\assign \deg(v)$. Let $A_v$ be the closure of $A$ in $k_v$, which is equal to the valuation ring $\big\{a_v \in k_v\ \big|\  |a_v|_v\leq 1\big\}$.
We may identify $A_v$ as the power series ring $\FF_{q^d}[\![v]\!]$ in $v$ over $\FF_{q^d}$, where $\FF_{q^d}$ is the finite extension of $\FF_q$ of degree $d$ in $\CC_v$.
Let $\varepsilon_v$ be the Teichm\"uller image of $\theta$ in $\FF_{q^d}$, i.e.~
$\varepsilon_v \assign \lim_{n\rightarrow \infty}\theta^{q^{dn}} \in \FF_{q^d}$.
Then $\FF_{q^d}=\FF_q(\varepsilon_v)$, and
we may also regard $A_v$ as the power series ring $\FF_{q^d}[\![\theta-\varepsilon_v]\!]$.

Let $\iota:\uA \overset{\sim}{\rightarrow} A$ be the $\FF_q$-algebra isomorphism sending $t$ to $\theta$.
Put $\uv\assign \iota^{-1}(v)$, and let $\uA_{\uv}$ be the completion of $\uA$ at $\uv$.
The isomorphism $\iota$ can be extended to an isomorphism $\iota:\uA_{\uv} \overset{\sim}{\rightarrow} A_v$, which provides an identification of $\uA_{\uv}$ as $\FF_{q^d}[\![\uv]\!]$ and also $\FF_{q^d}[\![t-\varepsilon_v]\!]$.
Given a finite extension $F$ of $k_v$ in $\CC_v$, recall in Section~\ref{sec: good-red} that $O_F$ is the ring of integers in $F$, $\pfk_F$ is the unique maximal ideal of $O_F$, and $\FF_F=O_F/\pfk_F$ is the residue field of $F$.
Put 
\[
\uA_{O_{F}}\assign O_{F}\otimes_{\FF_q}\uA \cong O_{F}[t], \quad 
\uA_{\FF_F}:= \FF_F\otimes_{\FF_q}\uA \cong \FF_F[t],
\]
and
\[
\uA_{\uv,O_{F}}\assign O_{F}[\![\uv]\!]
\cong O_{F}[\![t-\varepsilon_v]\!],
\quad 
\uA_{\uv,\FF_F}\assign \FF_F[\![\uv]\!]
\cong \FF_F[\![t-\varepsilon_v]\!]
.
\]
Then the reduction modulo $\pfk_F$ gives a surjection from 
$\uA_{O_F}$ onto $\uA_{\FF_F}$ and from 
$\uA_{\uv,O_F}$ onto $\uA_{\uv,\FF_F}$, and the Teichm\"uller embedding
$\eT:\FF_F\hookrightarrow O_F$ given in \eqref{eqn: Teichmuller}
induces an injection from $\uA_{\FF_F}$ into $\uA_{O_F}$
and from $\uA_{\uv,\FF_F}$ into $\uA_{\uv,O_F}$. In other words, we have the following commutative diagram:
\[
\hspace{2cm} \xymatrix{
\uA_{\FF_F} \ar@{^{(}->}[rr]
\ar@{^{(}->}[rd]^{\eT}
\ar@{=}[dd]
&& \uA_{\uv,\FF_F} \ar@{^{(}->}[rd]^{\eT} \ar@{=}[dd] |!{[d]}\hole && \\
& \uA_{O_F} \ar@{^{(}->}[rr]
\ar@{->>}[ld]^{\bmod \pfk_F} && 
\uA_{\uv,O_F}
\ar@{->>}[ld]^{\bmod \pfk_F} \\
\uA_{\FF_F}
\ar@{^{(}->}[rr] && \uA_{\uv,\FF_F} &
}
\]

Define the $q^d$-Frobenius twisting map on $\uA_{\uv,O_F} = O_F[\![\uv]\!]$ (resp.~$\uA_{\uv, \FF_F} = \FF_F[\![\uv]\!]$) by
\[
(\sum_{i\geq 0} c_i \uv^i)^{(d)}\assign \sum_{i\geq 0} c_i^{q^d}\uv^i, \quad \forall c_0,...,c_i,... \in O_F\quad 
\text{(resp.~$\FF_F$)}.
\]
The twisted polynomial ring $\uA_{\uv,O_F}[\tau^d]$  (resp.~$\uA_{\uv,\FF_F}[\tau^d]$) satisfies
\[
\tau^d \cdot \big(\sum_{i\geq 0} c_i \uv^i\big) = \big(\sum_{i\geq0} c_i^{q^d} \uv^i\big) \cdot \tau^d, \quad \forall\  \sum_{i\geq 0} c_i \uv^i \in O_F[\![\uv]\!] \quad \text{(resp.~$\FF_F[\![\uv]\!]$)}.
\]
Given a $t$-motive $M$ over $F$ which has good reduction,
let $\Mcal$ be a good model of $M$.
The \emph{local shtuka associated with $M$}
is the $\uA_{\uv,O_F}[\tau^d]$ module
\[
\hat{\Mcal}\assign \uA_{\uv,O_F} \otimes_{\uA_{O_F}} \Mcal
\quad \text{ with } \quad 
\tau^d \cdot \bigg(\big(\sum_{n\geq 0} c_n \uv^i\big) \otimes m \bigg)
\assign 
\big(\sum_{n\geq 0} c_n^{q^d} \uv^i\big) \otimes (\tau^d m).
\]
The reduction of $\hat{\Mcal}$ is defined to be
$
\overline{\hat{\Mcal}}\assign \hat{\Mcal}/\pfk_F \hat{\Mcal}$,
which is an $\FF_F[\![\uv]\!][\tau^d]$-module.
Following \cite[Definition 3.5.14]{HK20}, we introduce the crystalline module of $M$ as follows.

\begin{defn}\label{defn: cris}
Keep the above notation. The \emph{crystalline module of $M$} is
\[
H_{\cris}\big(M,\FF_F(\!(\uv)\!)\big)\assign 
\FF_F(\!(\uv)\!)
\otimes_{\FF_F[\![\uv]\!]} 
\tau^d \overline{\hat{\Mcal}},
\] 
\end{defn}

\begin{rem}
${}$
\begin{itemize}
\item[(1)] We may extend the $\tau^d$-action to $\FF_F(\!(\uv)\!)\otimes_{\FF_F[\![\uv]\!]} \overline{\hat{\Mcal}}$ by
\[
\tau^d\cdot \Big((\sum_{i\geq n}^{\infty}c_i \uv^i)\otimes \bar{m}\Big) \assign (\sum_{i\geq n}^{\infty}c_i^{q^d} \uv^i)\otimes \tau^d \bar{m}, \quad \forall \sum_{i\geq n}^{\infty}c_i \uv^i \in \FF_F(\!(\uv)\!) \ \text{ and } \ \bar{m} \in \overline{\hat{\Mcal}}.
\]
Then 
\[
H_{\cris}\big(M,\FF_F(\!(\uv)\!)\big) = \tau^d \cdot \Big(\FF_F(\!(\uv)\!)\otimes_{\FF_F[\![\uv]\!]} \overline{\hat{\Mcal}}\Big),
\]
and the pair $(H_{\cris}\big(M,\FF_F(\!(\uv)\!)\big),\tau^d)$ is called the \emph{$v$-isocrystal of the local shtuka $\hat{\Mcal}$}, see \cite[Example 3.5.7]{HK20}.
\item[(2)]
Since
$\overline{\hat{\Mcal}} \cong \FF_F[\![\uv]\!] \otimes_{\FF_F[t]} \overline{\Mcal}$, we may identify
\[
H_{\cris}\big(M,\FF_F(\!(\uv)\!)\big)\cong \FF_F(\!(\uv)\!) \otimes_{\FF_F[\![\uv]\!]}\tau^d\big(\FF_F[\![\uv]\!] \otimes_{\FF_F[t]} \overline{\Mcal}\big)
\cong \FF_F(\!(\uv)\!)\otimes_{\FF_F[t]}\tau^d \overline{\Mcal}.
\]
\end{itemize}
\end{rem}

Let $f:M_1\rightarrow M_2$ be a morphism between $t$-motives over $F$. Suppose $M_1$ and $M_2$ have good models $\Mcal_1$ and $\Mcal_2$, respectively.
From Remark~\ref{rem: good-model}~(3), we can extend $f$ to an $O_F[t,\tau]$-module homomorphism $f_0:\Mcal_1\rightarrow \Mcal_2$,
which induces an $\FF_F[\![\uv]\!][\tau^d]$-module homomorphism $\bar{f}_0:\overline{\Mcal}_1\rightarrow \overline{\Mcal}_2$.
By tensoring with $\FF_F(\!(\uv)\!)$, we obtain an $\FF_F(\!(\uv)\!)$-linear map
$f_{\cris}:H_{\cris}\big(M_1,\FF_F(\!(\uv)\!)\big)\rightarrow H_{\cris}\big(M_2,\FF_F(\!(\uv)\!)\big)$ such that 
the following diagram commutes:
\[
\xymatrix{
\tau^d \overline{\Mcal}_1\ar[rrr]^{\bar{f}_0} 
\ar[d] & && \tau^d \overline{\Mcal}_2 
\ar[d]\\
\FF_F(\!(\uv)\!)\otimes_{\FF_F[t]} \tau^d \overline{\Mcal}_1 \ar[r]^{\sim} & H_{\cris}\big(M_1,\FF_F(\!(\uv)\!)\big) \ar[r]^{f_{\cris}} &
H_{\cris}\big(M_2,\FF_F(\!(\uv)\!)\big)
& \FF_F(\!(\uv)\!)\otimes_{\FF_F[t]}\tau^d \overline{\Mcal}_2 \ar[l]_{\sim}.
}
\]
In particular, $f_{\cris}$ preserves the $\tau^d$-actions on both sides, and $f_{\cris}$ becomes an isomorphism when $f$ is an isogeny.
Moreover, $\Mcal_1 \otimes_{O_F[t]} \Mcal_2$ is a good model of $M_1\otimes M_2$, and the identity
\[
O_F \cdot \tau^d (\Mcal_1\otimes_{O_F[t]} \Mcal_2) = (O_F \cdot \tau^d \Mcal_1)\otimes_{O_F[t]}(O_F\cdot \tau^d \Mcal_2) \quad \subset \Mcal_1\otimes_{O_F[t]}\Mcal_2
\]
modulo $\pfk_F$ gives
\[
\tau^d (\overline{\Mcal_1\otimes_{O_F[t]}\Mcal_2})\  \cong \ \tau^d \overline{\Mcal}_1\otimes_{\FF_F[t]} \tau^d \overline{\Mcal}_2 \quad \subset \overline{\Mcal}_1\otimes_{\FF_F[t]}\overline{\Mcal}_2.
\]
By tensoring with $\FF_F(\!(\uv)\!)$, we get
\[
H_{\cris}\big(M_1\otimes M_2,\FF_F(\!(\uv)\!)\big) \ \cong \ H_{\cris}\big(M_1,\FF_F(\!(\uv)\!)\big) \otimes_{\FF_F(\!(\uv)\!)} H_{\cris}\big(M_2,\FF_F(\!(\uv)\!)\big).
\]
In conclusion,
let $\text{$t$-}\mathscr{M}_{\!F}^{\rm good}$
be the full-subcategory of $\text{$t$-}\mathscr{M}_{\!F}$ whose objects are $t$-motives over $F$ which have good reduction, and 
let $\mathscr{V}_{\!\FF_F(\!(\uv)\!)}$ be the category of finite dimensional vector spaces over $\FF_F(\!(\uv)\!)$. Then the functor
$H_{\cris}\big(\cdot,\FF_F(\!(\uv)\!)\big):\text{$t$-}\mathscr{M}_{\!F}^{\rm good} \rightarrow \mathscr{V}_{\!\FF_F(\!(\uv)\!)}$ 
given by
\[
M\longmapsto H_{\cris}(M,\FF_F(\!(\uv)\!)\big), \quad \forall \text{ $M \in {\rm ob}(\text{$t$-}\mathscr{M}_{\!F}^{\rm good})$ \quad and }\quad f\longmapsto f_{\cris}, \quad \forall \text{ morphism $f:M_1\rightarrow M_2$},
\]
is a covariant tensor functor.

\begin{rem}
In fact, this functor actually factors through the category of ``$v$-isocrystals'', see \cite[Definition 3.5.11]{HK20}.
\end{rem}

\subsection{Comparison isomorphism}\label{sec: comp}

To establish the de~Rham--crystalline comparison isomorphism, we first recall the following 
``big ring'' (see \cite[equation (3.5.2)]{HK20}
\begin{equation}\label{eqn: F-ring}
O_F[\![\uv, \uv^{-1}\}
\assign
\left\{
\sum_{i=-\infty}^{\infty} c_i \uv^i\ 
\bigg|\ c_i \in O_F
\ \text{ and } \lim_{i\rightarrow -\infty} |c_i|_v |v|_v^{si} = 0 \text{ for all $s>0$}
\right\},
\end{equation}
which contains (see \cite[Lemma 3.5.2]{CWY26}) 
\[
\omega_v(\uv) \assign \prod_{i=0}^{\infty}\left(1-\frac{v^{q^{di}}}{\uv}\right)
\ \ \text{and} \ \  \ 
\omega_{\theta,v}^{(N)}(\uv) \assign
\prod_{i=N}^{\infty}\left(1-\frac{(\theta-\varepsilon_v)^{q^{i}}}{t-\varepsilon_v^{q^i}}\right),\quad \forall N \in \ZZ_{\geq 0}.
\]
Here we identify $t$ as a power series $t(\uv)\in \FF_{q^d}[\![\uv]\!]$ via the embedding 
$\FF_q[t]=\uA\subset \uA_{\uv} = \FF_{q^d}[\![\uv]\!]$ with $t\big|_{\uv=v} = \theta$ (see \cite[(3.2.1)]{CWY26}).
Note that the equalities \cite[(3.5.7) and (3.5.12)]{CWY26} say in particular that for each $N \in \ZZ_{\geq 0}$, there exists $u_N(\uv) \in 1+ v A_v[\![\uv]\!] \subset A_v[\![\uv]\!]^{\times}$ such that 
\[
\omega_v(\uv) = u_N(\uv) \cdot \omega_{\theta,v}^{(N)}(\uv).
\]
Also,
one has that 
\[ 
(1-\frac{v}{\uv}) \cdot
\omega_v^{(d)}(\uv)=  \omega_v(\uv) \quad \text{ and } \quad 
\left(\prod_{i=0}^{d-1}(1-\frac{(\theta-\varepsilon_v)^{q^i}}{t-\varepsilon_v^{q^i}})
\right)
\cdot 
\omega_{\theta,v}^{(d)}(\uv) = \omega_{\theta,v}(\uv).
\]
which implies that for every $ i \in \ZZ_{\geq 0}$,
\begin{equation}\label{eqn: t-theta-inv}
 (1-\frac{(\theta-\varepsilon_v)^{q^i}}{t-\varepsilon_v^{q^i}}) = \frac{t-\theta^{q^i}}{t-\varepsilon_v^{q^i}} \ \text{ is invertible in }\  O_F[\![\uv,\uv^{-1}\}[\omega_v(\uv)^{-1}].
\end{equation}
This enables us to define a $q^d$-Frobenius twisting map on $O_F[\![\uv,\uv^{-1}\}[\omega_v(v)^{-1}]$ by
\[
(\sum_{i=-\infty}^{\infty}c_i \uv^i)^{(d)}\assign \sum_{i=-\infty}^{\infty}c_i^{q^d} \uv^i
\quad \text{ and } \quad 
(\omega_v(\uv)^{-1})^{(d)} \assign (1-\frac{v}{\uv}) \cdot \omega_v(\uv)^{-1}.
\]
Let $O_F[\![\uv,\uv^{-1}\}[\omega_v(v)^{-1}][\tau^d]$ be the associated twisted polynomial ring.
The lemma of Genestier--Lafforgue
\cite[Lemma 7.4]{GL11}
shows
(see also \cite[Lemma 3.5.8]{HK20}):

\begin{lem}\label{lem: key-iso}
There exists a unique $O_F[\![\uv,\uv^{-1}\}[\omega_v(\uv)^{-1}][\tau^d]$-module isomorphism
\[
\delta_{\Mcal}:
 O_F[\![\uv,\uv^{-1}\}[\omega_v(\uv)^{-1}]   
\otimes_{O_F[t]}
\Mcal
\overset{\sim}{\longrightarrow}  O_F[\![\uv,\uv^{-1}\}[\omega_v(\uv)^{-1}] 
\otimes_{\FF_F[t]}
\overline{\Mcal}
\]
which is congruent, modulo $\pfk_F$, to the identity map on 
$\FF_F(\!(\uv)\!)\otimes_{\FF_F[t]} \overline{\Mcal}$.
\end{lem}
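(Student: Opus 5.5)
We will construct $\delta_{\Mcal}$ by the standard Frobenius-fixed-point (successive approximation) argument of Genestier--Lafforgue, working in matrix form. Write $\Rcal\assign O_F[\![\uv,\uv^{-1}\}[\omega_v(\uv)^{-1}]$. Choose an $O_F[t]$-basis $m_1,\dots,m_r$ of $\Mcal$ (free by Seshadri's theorem). We write $\tau^d m = \Phi\cdot m$ with $\Phi\in \Mat_r(O_F[t])$. Its reduction $\bar m_i$ gives an $\FF_F[t]$-basis of $\overline{\Mcal}$ with $\tau^d\bar m=\bar\Phi\,\bar m$. We view $\bar\Phi$ inside $\Mat_r(O_F[t])$ via the Teichm\"uller embedding $\eT$, so both sides of the desired isomorphism are free $\Rcal$-modules of rank $r$. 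An $\Rcal$-linear map sending $m\mapsto X\bar m$ commutes with $\tau^d$ if and only if
\[
X^{(d)}\,\bar\Phi=\Phi\, X .
\]
The condition ``congruent to the identity mod $\pfk_F$'' means $X\equiv 1 \pmod{\pfk_F}$.

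The key input is invertibility. Since $\Mcal$ is a good model, Remark~\ref{rem: good-model}(1) gives $\det\Phi = c\prod_{i=0}^{d-1}(t-\theta^{q^i})^{n}$ up to a unit of $O_F$, with $c\in O_F^{\times}$ (iterate $\tau$ $d$ times). By \eqref{eqn: t-theta-inv}, each factor $t-\theta^{q^i}$ is invertible in $\Rcal$, since $t-\varepsilon_v^{q^i}$ is a unit in $O_F[\![\uv]\!]$-type rings. The same holds for $\bar\Phi$, whose determinant is the reduction, involving $(t-\varepsilon_v^{q^i})$. Hence $\Phi,\bar\Phi\in\GL_r(\Rcal)$. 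This allows us to rewrite the equation as a fixed point problem
\[
X=\Phi^{-1}X^{(d)}\bar\Phi .
\]
We set $X_0=1$ and $X_{n+1}=\Phi^{-1}X_n^{(d)}\bar\Phi$. Then $X_{n+1}-X_n=\Phi^{-1}(X_n-X_{n-1})^{(d)}\bar\Phi$, and $X_1-X_0=\Phi^{-1}(\bar\Phi-\Phi)\in\pfk_F\,\Mat_r(\Rcal)$ because $\Phi\equiv\bar\Phi$ mod $\pfk_F$.

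The main obstacle is convergence. The twist $(\cdot)^{(d)}$ raises coefficients to the $q^d$-th power, so if $Y$ has coefficients in $\pfk_F^{j}$ then $Y^{(d)}$ has coefficients in $\pfk_F^{q^dj}$. However, $\Phi^{-1}$ carries denominators: powers of $\omega_v(\uv)^{-1}$, and inverses of $t-\theta^{q^i}$ whose expansion in $\uv^{-1}$ has coefficients growing like powers of $v$. We need to show the contraction from $q^d$-powering beats this fixed loss. The plan is to work in the subring with bounded $\omega_v^{-1}$-denominator and to control each iteration with the Gauss-type norms $\|\sum c_i\uv^i\|_s=\sup|c_i|_v|v|_v^{si}$ that define $O_F[\![\uv,\uv^{-1}\}$. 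For each $s$, multiplication by $\Phi^{-1}$ (after clearing a fixed power of $\omega_v$, using $(\omega_v^{-1})^{(d)}=(1-v/\uv)\omega_v^{-1}$ to keep the denominator exponent stable) multiplies the norm by a bounded constant $C_s$. Frobenius sends norm $\epsilon$ to at most $\epsilon^{q^d}$ for suitably adjusted $s$ (the $s$ shifts by a factor $q^d$, which is harmless since all $s>0$ are required). Hence the differences go to $0$ doubly exponentially, and $X=\lim X_n$ exists in $\Mat_r(\Rcal)$ with $X\equiv1$.

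Invertibility of $X$ follows by running the same construction with the roles of $\Phi$ and $\bar\Phi$ swapped, or directly since $\det X\equiv1$ mod $\pfk_F$ and is a unit by the same approximation. For uniqueness, the difference $D=X-X'$ of two solutions satisfies $D=\Phi^{-1}D^{(d)}\bar\Phi$ with $D\equiv0$ mod $\pfk_F$. Iterating, the same estimate forces $\|D\|_s\le C\|D\|_{s'}^{q^{dn}}\to0$, so $D=0$. Finally, independence of the choice of basis follows from uniqueness.
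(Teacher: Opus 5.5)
Your proposal is correct: it is the standard Frobenius fixed-point (successive approximation) argument behind the Genestier--Lafforgue lemma, and the paper does not reprove that lemma but cites it from \cite[Lemma 7.4]{GL11} and \cite[Lemma 3.5.8]{HK20}. Two points should be made explicit. First, write $\det\Phi=c\prod_{i=0}^{d-1}(t-\theta^{q^i})^{e}$ and note that $\prod_{i=0}^{d-1}(t-\theta^{q^i})$ is $(\uv-v)$ times a unit of $O_F[\![\uv]\!]$; then $Y_n\assign\omega_v(\uv)^{e}(X_{n+1}-X_n)$ satisfies $\|Y_n\|_s\le |v|_v^{-se}\,\|Y_{n-1}\|_{s/q^d}^{q^d}$, so the accumulated loss is only $|v|_v^{-sen}$ against the decay $|\pi_F|^{q^{dn}}$, where $\pi_F$ is a uniformizer of $F$ (an arbitrary $s$-independent constant would not suffice here). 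Second, invertibility of $X$ should come from your swapped construction, which lands in $O_F[\![\uv,\uv^{-1}\}$ because $\det\bar\Phi$ is a unit times $\uv^{e}$, combined with uniqueness, since $\det X\equiv 1 \bmod \pfk_F$ alone does not make $\det X$ a unit.
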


Moreover, we embed $O_F[\![\uv,\uv^{-1}\}$ into $F[\![\uv-v]\!]$
through the identification $\uv = v+(\uv-v)$.
One sees that
$\omega_v^{(d)}(\uv)$ and
$\omega_{\theta,v}^{(d)}(\uv)$
are invertible in $F[\![\uv-v]\!]$ (as their  values at $\uv = v$ are 
\[
\omega_v^{(d)}(v)=
\prod_{i=1}^{\infty}\left(1-\frac{v^{q^di}}{v}\right)\quad \text{ and } \quad 
\omega_{\theta,v}^{(d)}(v)
= \prod_{i=d}^{\infty}\left(
1-\frac{(\theta-\varepsilon_v)^{q^i}}{\theta-\varepsilon_v^{q^i}}
\right),
\quad \text{which are both nonzero).}
\]
Therefore the isomorphism $\delta_{\Mcal}$ in Lemma~\ref{lem: key-iso} leads to an $F[\![\uv-v]\!]$-module isomorphism
\begin{equation}\label{eqn: delta-tilde-M}
\tilde{\delta}_{\Mcal}:F[\![\uv-v]\!] \otimes_{O_F[t]} \Big(O_F \cdot (\tau^d \Mcal)\Big)  
\ \overset{\sim}{\longrightarrow}\ 
F[\![\uv-v]\!] \otimes_{\FF_F[t]} \tau^d \overline{\Mcal}
.
\end{equation}
Here $t$ maps into $F[\![\uv-v]\!]$ via the inclusion 
\[
\FF_q[t]=\uA \subset \uA_{\uv} = \FF_{q^d}[\![\uv]\!] \subset \FF_F[\![\uv]\!] =\uA_{\uv,\FF_F} \overset{\eT}{\hookrightarrow} \uA_{\uv,O_F} = O_F[\![\uv]\!] \subset O_F[\![\uv,\uv^{-1}\} \subset F[\![\uv-v]\!].
\]

Taking reduction modulo $(\uv-v)$ on both sides, one sees that the left hand side of \eqref{eqn: delta-tilde-M} becomes
\[
F \underset{\theta\mapsfrom t,\, O_F[t]}{\otimes}  \Big(O_F \cdot (\tau^d \Mcal)\Big)  \ \cong \ \frac{F\cdot (\tau^d M)}{(t-\theta) \big(F\cdot \tau^d M)} = H_{\dR}(M,F); 
\]
while the right hand side of \eqref{eqn: delta-tilde-M} becomes
\begin{align*}
F
\underset{\theta \mapsfrom t,\, \FF_F[t]}{\otimes}
\tau^d \overline{\Mcal}
& \cong  F 
\underset{v \mapsfrom \uv, \, \FF_F(\!(\uv)\!)}{\otimes} 
\FF_F(\!(\uv)\!) \otimes_{\FF_F[t]} \tau^d \overline{\Mcal} 
\\
& = 
F 
\underset{v \mapsfrom \uv, \, \FF_F(\!(\uv)\!)}{\otimes} 
H_{\cris}\big(M,\FF_F(\!(\uv)\!)\big)
\rassign H_{\cris}(M,F).
\end{align*}
Therefore:
\begin{prop}\label{prop: c-d-iso}
The isomorphism $\tilde{\delta}_{\Mcal}$ in \eqref{eqn: delta-tilde-M} induces the de~Rham--crystalline comparison isomorphism
\[
\Phi_M: H_{\dR}(M,F) \overset{\sim}{\longrightarrow} H_{\cris}(M,F).
\]
\end{prop}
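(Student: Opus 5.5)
The argument is essentially a base change of $\tilde{\delta}_{\Mcal}$ along the evaluation map $\uv\mapsto v$, followed by the two identifications already recorded before the statement. First I check that $\tilde{\delta}_{\Mcal}$ is defined, i.e.\ that $\delta_{\Mcal}$ of Lemma~\ref{lem: key-iso} restricts, after extending scalars to $F[\![\uv-v]\!]$, to an isomorphism between $F[\![\uv-v]\!]\otimes_{O_F[t]}(O_F\cdot\tau^d\Mcal)$ and $F[\![\uv-v]\!]\otimes_{\FF_F[t]}\tau^d\overline{\Mcal}$.

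\emph{Step 1: $\delta_{\Mcal}$ carries $\tau^d$-images to $\tau^d$-images.} Since $\delta_{\Mcal}$ is $\tau^d$-equivariant, it maps $\tau^d\bigl(O_F[\![\uv,\uv^{-1}\}[\omega_v(\uv)^{-1}]\otimes\Mcal\bigr)$ onto the corresponding $\tau^d$-image on the right. The module generated by these images contains the twist of the scalars. Twisting scalars introduces factors $(\omega_v(\uv)^{-1})^{(d)}=(1-v/\uv)\,\omega_v(\uv)^{-1}$, and more generally factors of the form $\frac{t-\theta^{q^i}}{t-\varepsilon_v^{q^i}}$ as in \eqref{eqn: t-theta-inv}. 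After embedding into $F[\![\uv-v]\!]$:
\begin{itemize}
\item the element $\omega_v(\uv)=(1-v/\uv)\,\omega_v^{(d)}(\uv)$ becomes a unit times $(\uv-v)$, because $\omega_v^{(d)}(v)\neq 0$;
\item $t-\theta$ is likewise a unit multiple of $\uv-v$.
\end{itemize}
The crux is therefore a bookkeeping check that on both sides the $O_F\cdot\tau^d$-submodule, tensored with $F[\![\uv-v]\!]$, equals the full module tensored with $F[\![\uv-v]\!]$, up to the same power of $(\uv-v)$. On the right this is immediate, because $\tau^d$ on $\overline{\Mcal}$ has determinant a unit times $(t-\bar\theta)^n$, which is already a unit away from $\uv=v$. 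On the left, $\det\tau^d$ involves $(t-\theta)^n$, and Lemma~\ref{lem: dR-id}, applied with $\ell=d$, shows that the relevant lattice mod $(t-\theta)$ is the de~Rham module. I would phrase this by writing $\delta_{\Mcal}$ in bases and comparing it with the matrix of $\tau^d$. The map $\tilde{\delta}_{\Mcal}$ is then an isomorphism of free $F[\![\uv-v]\!]$-modules of equal rank.

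\emph{Step 2: reduce modulo $(\uv-v)$.} Any isomorphism of free modules stays an isomorphism after reduction modulo $(\uv-v)$. The preceding computation identifies the two sides of the reduction as follows:
\begin{itemize}
\item on the left, $t\mapsto\theta$ gives $F\cdot\tau^dM/(t-\theta)F\cdot\tau^dM$, which is $H_{\dR}(M,F)$ by Lemma~\ref{lem: dR-id};
\item on the right, $H_{\cris}(M,F)$ is obtained by specializing $\uv\mapsto v$; here we use that $F$ is an $\FF_F(\!(\uv)\!)$-algebra via the Teichm\"uller map.
\end{itemize}
Their composite is the map $\Phi_M$ of the statement.

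I expect Step 1 to be the main obstacle. The substance is showing that $\delta_{\Mcal}$, a priori defined only after inverting $\omega_v$, matches the $\tau^d$-lattices exactly in the completion at $\uv=v$. If direct bookkeeping is awkward, I would fall back on \cite[Lemma 3.5.8 and Definition 3.5.14]{HK20}, where this lattice statement is the defining property of the comparison.
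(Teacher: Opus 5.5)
\textbf{Verdict.} Your Step 2 is the paper's argument: reduce $\tilde\delta_{\Mcal}$ modulo $(\uv-v)$, then identify the two sides with $H_{\dR}(M,F)$ (via Lemma~\ref{lem: dR-id}) and with $H_{\cris}(M,F)$. The genuine gap is in Step 1, where you place the crux in a determinant comparison ``up to the same power of $(\uv-v)$''.

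\textbf{Why the Step 1 criterion fails.} First, the claim is false as stated, and your own two computations show this. After $\otimes F[\![\uv-v]\!]$, the lattice $\tau^d\overline{\Mcal}$ equals $\overline{\Mcal}$. By contrast, $O_F\cdot\tau^d\Mcal$ has colength $n$ in $\Mcal$, because $t-\theta$ is a uniformizer while $t-\theta^{q^i}$ is a unit for $1\leq i\leq d-1$. It is also generally not $(\uv-v)^j$ times $\Mcal$: for a rank $r\geq2$ Drinfeld module the colength is $1$. Second, and more importantly, no determinant or index count can show that a given isomorphism of $F(\!(\uv-v)\!)$-vector spaces carries one lattice onto another; think of $\mathrm{diag}(\uv-v,(\uv-v)^{-1})$. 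Third, $\delta_{\Mcal}$ itself does not extend over $F[\![\uv-v]\!]$, since $\omega_v(\uv)$ vanishes at $\uv=v$. So there is no meaningful comparison of full modules there.

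\textbf{The missing step.} It is the identity you half-wrote: $(\omega_v(\uv)^{-1})^{(d)}=(1-v/\uv)\,\omega_v(\uv)^{-1}=\omega_v^{(d)}(\uv)^{-1}$. Take an $O_F[t]$-basis $\{m_i\}$ of $\Mcal$ and write $\delta_{\Mcal}(m_i)=\sum_j\Delta_{ij}\,\bar m_j$, with $\Delta\in\GL_r\big(O_F[\![\uv,\uv^{-1}\}[\omega_v(\uv)^{-1}]\big)$.
\begin{itemize}
\item By $\tau^d$-equivariance, $\delta_{\Mcal}(\tau^d m_i)=\sum_j\Delta^{(d)}_{ij}\,\tau^d\bar m_j$. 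Hence $\tilde\delta_{\Mcal}$ has matrix $\Delta^{(d)}$ with respect to the bases $\{\tau^d m_i\}$ and $\{\tau^d\bar m_i\}$.
\item Both $\Delta^{(d)}$ and $(\Delta^{(d)})^{-1}=(\Delta^{-1})^{(d)}$ have entries in $O_F[\![\uv,\uv^{-1}\}[\omega_v^{(d)}(\uv)^{-1}]$.
\item This ring embeds into $F[\![\uv-v]\!]$ because $\omega_v^{(d)}(v)\neq0$. Therefore $\Delta^{(d)}\in\GL_r(F[\![\uv-v]\!])$.
\end{itemize}
This is exactly why the paper only needs to note that $\omega_v^{(d)}(\uv)$ is invertible in $F[\![\uv-v]\!]$. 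Replace your bookkeeping with this, and Step 2 completes the proof as in the paper. Your fallback to \cite{HK20} would merely import the statement, not prove it.
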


\begin{rem}
The uniqueness of $\delta_M$ implies in particular that for $t$-motives $M_1$ and $M_2$ over $F$ which have good reduction and a morphism $f:M_1 \rightarrow M_2$, we have
\[
\Phi_{M_2} \circ f_{\dR} = f_{\cris} \circ \Phi_{M_1}.
\]
Moreover, the following diagram commutes:
\[
\xymatrix{
H_{\dR}(M_1\otimes M_2,F) \ar[rr]^{\Phi_{M_1\otimes M_2}}_{\sim} 
\ar[d]_[@! 90]{\sim}
&& H_{\cris}(M_1\otimes M_2,F) \ar[d]^[@! 90]{\sim} \\
H_{\dR}(M_1,F)\otimes_F H_{\dR}(M_2,F) \ar[rr]^{\Phi_{M_1}\otimes \Phi_{M_2}}_{\sim} &&
H_{\cris}(M_1,F)\otimes_F H_{\cris}(M_2,F).
}
\]
Thus the isomorphism $\Phi_M$ for $M \in {\rm ob}(\text{$t$-}\mathscr{M}_{\!F}^{v})$ gives a tensor isomorphism between the tensor functors $H_{\dR}(\cdot,F)$ and $H_{\cris}(\cdot,F)$.
\end{rem}

\begin{rem}\label{rem: c-d-period-matrix}
Suppose $M$ is defined over $F_0\assign F \cap \bar{k}$, i.e.~$M = M_{0/F} = F \otimes_{F_0} M_0$ where $M_0$ is defined over $F_0$.
Let $\beta_0=\{m_1,...,m_r\}$ be an $O_F[t]$-base  for the good model $\Mcal$ of $M$ with $m_1,...,m_r \in M_0$.
Then 
\[
\beta_{0,\dR}\assign\big\{\omega_{0,i}\assign \tau^d m_i  \bmod (t-\theta) \in H_{\dR}(M_0,F_0)\ \big|\  i=1,...,r\big\}
\]
is an $F_0$-basis for $H_{\dR}(M_0,F_0)$.
On the other hand, for $i=1,...,r$, let $\bar{m}_i \assign m_i \bmod \pfk_F \in \overline{\Mcal}$.
Then
\[
\beta_{0,\cris}\assign 
\Big\{
\bar{\omega}_{0,i}\assign 1\otimes \tau^d \bar{m}_i \in F
\underset{\theta \mapsfrom t,\, \FF_F[t]}{\otimes}
\tau^d \overline{\Mcal} = H_{\cris}(M,F)\ \Big|\  i=1,...,r \Big\}
\]
is an $F$-basis for $H_{\cris}(M,F)$.
The \emph{de~Rham--crystalline comparison period matrix of $M$ with respect to $\beta_0$} is the matrix $[\Phi_M]_{\beta_0} \in \Mat_r(F)$ defined by
\[
\begin{pmatrix}
\Phi_M(\omega_{0,1}) \\
\vdots \\
\Phi_M(\omega_{0,r})
\end{pmatrix}
= 
[\Phi_M]_{\beta_0}^{\rm t} \cdot 
\begin{pmatrix}
    \bar{\omega}_{0,1} \\
    \vdots \\
    \bar{\omega}_{0,r}
\end{pmatrix}.
\]
Here $[\Phi_M]_{\beta_0}^{\rm t}$ denotes the transpose of $[\Phi_M]_{\beta_0}$.
In general, suppose $\beta = \{\omega_1,...,\omega_r\}$ is another $F_0$-basis of $H_{\dR}(M_0,F_0)$.
Write
\[
\begin{pmatrix}
    \omega_1 \\
    \vdots \\
    \omega_r
\end{pmatrix}
= U^{\rm t} \cdot 
\begin{pmatrix}
    \omega_{0,1} \\
    \vdots \\
    \omega_{0,r}
\end{pmatrix}
\quad \text{ where $U \in \GL_r(F_0)$}.
\]
Let $\bar{\beta}\assign\{\bar{\omega}_1,...,\bar{\omega}_r\} \subset H_{\cris}(M,F)$ be defined by
\[
\begin{pmatrix}
    \bar{\omega}_1 \\
    \vdots \\
    \bar{\omega}_r
\end{pmatrix}
\assign U^{\rm t} \cdot 
\begin{pmatrix}
    \bar{\omega}_{0,1} \\
    \vdots \\
    \bar{\omega}_{0,r}
\end{pmatrix}.
\]
Then
\begin{equation}\label{eqn: [Phi-M]}
\begin{pmatrix}
    \Phi_{M}(\omega_1) \\
    \vdots \\
    \Phi_{M}(\omega_r)
\end{pmatrix}
= [\Phi_M]_{\beta}^t \cdot
\begin{pmatrix}
    \bar{\omega}_1 \\
    \vdots \\
    \bar{\omega}_r
\end{pmatrix}
\quad \text{ where } \quad 
[\Phi_M]_{\beta} \assign U^{-1}\cdot [\Phi_M]_{\beta_0} \cdot U.
\end{equation}

Note that in \cite[Theorem~1.2.3 and 1.3.1]{CWY26}, we completely determined the transcendence properties of the entries in $[\Phi_M]_{\beta}$ when $M$ is the \emph{restriction of scalars of a degree-$\ell$ Carlitz $t$-motive} (see Remark~\ref{rem: RC}).
This example suggests the possibility of a $v$-adic analogue of the Grothendieck period conjecture for de~Rham--crystalline comparison periods. More precisely, it is natural to ask whether
\begin{equation}\label{eqn: v-G-conj}
\trdeg_{\bar{k}}
\Big(\bar{k}\big([\Phi_M]_{\beta}\big)\Big)
\stackrel{?}{=}
\dim\Gamma_M,
\end{equation}
where $\Gamma_M$ denotes the motivic Galois group of $M$.
A systematic investigation of this question lies beyond the scope of this paper, and we do not pursue it further here.
\end{rem}

\subsection{Crystalline action}\label{sec: cris-act}

Let $\Wcal_v$ be the \emph{Weil group} in $\Gal(\bar{k}_v/k_v)$, i.e.~elements in $\Wcal_v$ are lifts of powers of the Frobenius automorphism on the residue fields. More precisely, for every $\sigma \in \Wcal_v$, there exists a unique integer $\deg \sigma$ such that 
\[
|\sigma(c) -  c^{q^{d\deg \sigma}}|_v<1, \quad \forall c \in \bar{k}_v \ \text{ with }\  |c|_v\leq 1.
\]
Then the $\tau^d$-action on $H_{\cris}\big(M,\FF_F(\!(\uv)\!)\big)$ extends to a semilinear $\Wcal_v$-action on
\[
H_{\cris}(M,\bar{k}_v)\assign \bar{k}_v \otimes_F H_{\cris}(M,F)\  \cong \ \bar{k}_v \underset{v \mapsfrom \uv,\, \FF_F(\!(\uv)\!)}{\otimes} H_{\cris}\big(M,\FF_F(\!(\uv)\!)\big)
\]
defined as follows: for every $\sigma \in \Wcal_v$ with $\deg \sigma\geq 0$, the corresponding $\sigma$-linear map $\tilde{\sigma}_{\cris}$ is given by
\begin{equation}\label{eqn: t-sigma-cris}
\tilde{\sigma}_{\cris}(c \otimes m) \assign \sigma(c) \otimes (\tau^{d\deg \sigma} \cdot m),
\quad \forall c \in \bar{k}_v,\ m \in H_{\cris}\big(M,\FF_F(\!(\uv)\!)\big).
\end{equation}
Extending the isomorphism $\Phi_M$ from $F$ to $\bar{k}_v$, we may transform this semilinear $\Wcal_v$-action to $H_{\dR}(M,\bar{k}_v)$,
called the \emph{crystalline action}.
In concrete terms,
for every $\sigma \in \Wcal_v$, we set 
\begin{eqnarray}\label{eqn: sigma-cris}
\sigma_{\cris}& \assign & \Phi_M^{-1}\circ \tilde{\sigma}_{\cris} \circ \Phi_M: H_{\dR}(M,\bar{k}_v) \longrightarrow H_{\dR}(M,\bar{k}_v),
\end{eqnarray}
which is $\sigma$-linear 
so that the following diagram commutes:
\[
\xymatrix{
H_{\dR}(M,\bar{k}_v) \ar[r]^{\Phi_M}_{\sim}
\ar[d]_{\sigma_{\cris}} &
H_{\cris}(M,\bar{k}_v) \ar[d]^{\tilde{\sigma}_{\cris}} \\
H_{\dR}(M,\bar{k}_v) \ar[r]^{\Phi_M}_{\sim}
& H_{\cris}(M,\bar{k}_v).
}
\]
From the functoriality of the $\tau^d$-action on $H_{\cris}\big(M,\FF_F(\!(\uv)\!)\big)$ and the isomorphism $\Phi_M$, we have the compatibility of this crystalline action, i.e.~given $M_1,M_2 \in {\rm ob}(\text{$t$-}\mathscr{M}_{\!F}^{v})$ and a morphism $f:M_1\rightarrow M_2$ over $\bar{k}_v$, the following diagrams commute
\begin{equation}\label{eqn: equivariance}
\xymatrix{
H_{\dR}(M_1,\bar{k}_v)\ar[r]^{f_{\dR}} \ar[d]_{\sigma_{\cris}} & H_{\dR}(M_2,\bar{k}_v) \ar[d]^{\sigma_{\cris}} \\
H_{\dR}(M_1,\bar{k}_v) \ar[r]^{f_{\dR}} & M_{\dR}(M_2,\bar{k}_v),
}
\end{equation}
\[
\xymatrix{
H_{\dR}(M_1\otimes M_2,\bar{k}_v) \ar[r]^<<<<{\sim} 
\ar[d]_{\sigma_{\cris}} 
& H_{\dR}(M_1,\bar{k}_v)\otimes_F H_{\dR}(M_2,\bar{k}_v) \ar[d]^{\sigma_{\cris}\otimes \sigma_{\cris}} \\
H_{\dR}(M_1\otimes M_2,\bar{k}_v) \ar[r]^<<<<{\sim}  
& H_{\dR}(M_1,\bar{k}_v)\otimes_F H_{\dR}(M_2,\bar{k}_v).
}
\]

Let $r = \dim_{\bar{k}_v}\big(H_{\dR}(M,\bar{k}_v)\big)$ and $\beta = \{\omega_1,...,\omega_r\}$ be an ordered $\bar{k}_v$-basis for $H_{\dR}(M,\bar{k}_v)$.
For every $\sigma \in \Wcal_v$,
there exists $\Pcal^M_{\beta}(\sigma) \in \Mat_r(\bar{k}_v)$ so that
\begin{equation}\label{eqn: v-PM}
\hspace{2.5cm}
\begin{pmatrix}
\sigma_{\cris}(\omega_1) \\
\vdots \\
\sigma_{\cris}(\omega_r)
\end{pmatrix}
= \Pcal^M_{\beta}(\sigma)^{\rm t} \cdot 
\begin{pmatrix}
    \omega_1 \\
    \vdots \\
    \omega_r
\end{pmatrix} \hspace{1cm} \text{(here $\Pcal^M_{\beta}(\sigma)^{\rm t}$
is the transpose of $\Pcal^M_{\beta}(\sigma)$)}.
\end{equation}
We call $\Pcal^M_{\beta}(\sigma)$ the \emph{$v$-isocrystal period matrix of $\sigma$ on $M$ with respect to $\beta$}.
In particular, we have the following ``$1$-cocycle relation''
\[
\Pcal^M_{\beta}(\sigma \sigma') = 
\Pcal^M_{\beta}(\sigma)\cdot {}^{\sigma} \Pcal^M_{\beta}(\sigma'), \quad \forall \sigma,\sigma' \in \Wcal_v.
\]

\begin{rem}\label{rem: matrix P-Phi}   

Suppose $M$ is defined over $F_0$ as in Remark~\ref{rem: c-d-period-matrix} and $\beta$ is algebraic, which means $\beta \subset H_{\dR}(M,F_0)$.  
Then
\[
\Pcal_{\beta}^M(\sigma) = [\Phi_M]_{\beta}^{-1} \cdot [\tilde{\sigma}_{\cris}]_{\bar{\beta}} \cdot {}^{\sigma}[\Phi_M]_{\beta},
\]
where $[\Phi]_{\beta}$ is the crystalline-de~Rham comparison period matrix of $M$ with respect to $\beta$, ${}^{\sigma}[\Phi_M]_{\beta}$ is obtained by applying $\sigma$ to each entry of $[\Phi_M]_{\beta}$,
and the matrix $[\tilde{\sigma}_{\cris}]_{\bar{\beta}} \in \GL_r(\bar{k})$ is determined by
\[
\begin{pmatrix}
    \tilde{\sigma}_{\cris}(\bar{\omega}_1) \\
    \vdots \\
    \tilde{\sigma}_{\cris}(\bar{\omega}_r)
\end{pmatrix}
= [\tilde{\sigma}_{\cris}]_{\bar{\beta}}^{\rm t} \cdot 
\begin{pmatrix}
    \bar{\omega}_1 \\
    \vdots \\
    \bar{\omega}_r
\end{pmatrix}.
\]
Consequently, $\Pcal_\beta^M(\sigma)$ is obtained by transporting the algebraic crystalline action to the de~Rham realization via the comparison isomorphism. This justifies regarding $\Pcal_\beta^M(\sigma)$ as the natural $v$-adic analogue of a period matrix.
\end{rem}

\begin{rem}
The evaluation $\uv\mapsto v$ gives an isomorphism $\FF_F(\!(\uv)\!)\stackrel{\sim}{\rightarrow} \FF_F(\!(v)\!) =\FF_Fk_v$, the maximal unramified subextension of $F$ over $k_v$.
Keep the notation of Remark~\ref{rem: c-d-period-matrix}.
Then for every $\sigma \in \Wcal_v$ with $\deg \sigma =s >0$, we have
\[
[\tilde{\sigma}_{\cris}]_{\beta_{0,\cris}}
= [\tilde{\sigma}_{v,\cris}]_{\beta_{0,\cris}} \cdot ({}^{\sigma_v}[\tilde{\sigma}_{v,\cris}]_{\beta_{0,\cris}}) \cdots ({}^{\sigma_v^{s-1}}[\tilde{\sigma}_{v,\cris}]_{\beta_{0,\cris}}),
\]
where $\sigma_v \in \Wcal_v$
is any choice of Frobenius automorphism of $\bar{k}_v/k_v$ (i.e.~$\deg\sigma_v = 1$).
In other words, to describe the crystalline action, it is essentially sufficient to determine a single $v$-isocrystal period matrix $\Pcal^M_{\beta}(\sigma_v)$ (with respect to a given $F$-basis $\beta$ for $H_{\dR}(M,F)$).
\end{rem}

\begin{ex}[\emph{Carlitz $t$-motive}]\label{sec: Ex-Car-t}
Recall that $M_C = k[t]$ has a good model $\Mcal_C = A_v[t]$.
From the identifications of both $A_v[\![\uv,\uv^{-1}\}[\omega_{v}(\uv)^{-1}]\otimes_{A_v[t]} \Mcal_C$
and 
$A_v[\![\uv,\uv^{-1}\}[\omega_{v}(\uv)^{-1}]\otimes_{\FF_v[t]} \overline{\Mcal}_C$
with $A_v[\![\uv,\uv^{-1}\}[\omega_{v}(\uv)^{-1}]$,
the isomorphism $\delta_{M_C}$ in Lemma~\ref{lem: key-iso} is given by
\[
\delta_{M_C}(m) = \omega_{\theta,v}(\uv)\cdot m, \quad \forall m \in A_v[\![\uv,\uv^{-1}\}[\omega_{v}(\uv)^{-1}].
\]
Moreover, let $\omega_C \in H_{\dR}(M_C,k_v)$ be the differential corresponding to the element
\[
\prod_{i=1}^{d-1}(\theta-\theta^{q^i})^{-1} \cdot (\tau^d \cdot 1) \in k \cdot \tau^d M_C.
\]
As $\omega_{\theta,v}^{(d)}(v) \in k_v^{\times}$,
the crystalline action of $\sigma_v$ on $\omega_C$ is given by \cite[Theorem~1.2.3]{CWY26}
\[
\sigma_{v,\cris}(\omega_C) = \omega_{\theta,v}^{(d)}(v)^{-1} \cdot \left(\prod_{i=0}^{d-1}(\theta-\varepsilon_v^{q^i})\right) \cdot \omega_{\theta,v}^{(d)}(v) \cdot  \omega_C = v \cdot \omega_C.
\]
In other words, if we let $\beta = \{\omega_C\}$,
then
\[
[\Phi_{M_C}]_{\beta} =  {}^{\sigma}[\Phi_{M_C}]_{\beta} = \omega_{\theta,v}^{(d)}(v), \quad \forall \sigma \in \Wcal_v, \quad 
\text{ and } 
\Pcal_{M_C}(\sigma_v)=[\tilde{\sigma}_{v,\cris}]_{\bar{\beta}} = v.
\]
In this case, it is known that $\omega_{\theta,v}(v)$ is transcendental over $k$ (see \cite[Theorem~1.3.1]{CWY26}), and the crystalline action becomes simply multiplying the corresponding power of $v$.
\end{ex}

\begin{rem}\label{rem: RC}
Given a positive integer $\ell$, let $\uC_{\ell} = (\GG_{a/k},C_{\ell})$ be the degree-$\ell$ Carlitz module (over $k$) defined by $C_{\ell,t}(\rmx) = \theta \rmx + \rmx^{q^{\ell}}$.
Then $\uC_{\ell}$ is a Drinfeld module of rank $\ell$ whose endomorphism ring $\End(\uC_{\ell}) \cong \FF_{q^{\ell}}[t]$.
The corresponding $t$-motive, denoted by 
$R_{\tau}(\uC_{\ell})$, is called the \emph{restriction of scalars of $\uC_{\ell}$}  (see \cite[Definition~3.1.1]{CWY26}). 
It was shown in \cite[Theorem 1.2.1]{CWY26} that the associated $v$-isocrystal period matrix $\Pcal^{R_{\tau}(\uC_{\ell})}_{\beta}(\sigma_v)$ (for a particular ``algebraic'' basis $\beta$) admits an explicit description in terms of $v$-adic arithmetic gamma values, which agrees with the formula in Example~\ref{sec: Ex-Car-t} when $\ell=1$.
Moreover, from the decomposition of $\Pcal^{R_{\tau}(\uC_{\ell})}_{\beta}(\sigma_v)$ in terms of the associated de~Rham--crystalline period matrix as in Remark~\ref{rem: matrix P-Phi}, a $v$-adic ``arithmetic'' Lang--Rohrlich conjecture was proved in \cite[Theorem~1.4.1]{CWY26} by completely deteremining the transencental properties of the de~Rham--crystalline period matrix. 
\end{rem}

In the following sections, we turn to the ``geometric'' case and establish a $v$-isocrystal period interpretation of $v$-adic geometric special gamma values using ``soliton $t$-motives''.

\section{Soliton \texorpdfstring{$t$}{t}-motives}\label{sec: Sol}

Given a monic polynomial $\unfk \in \uA$, recall that $C_{\unfk}^*(\rmx) \in A[\rmx]=\FF_q[\theta,\rmx]$ is the $\unfk$-th Carlitz cyclotomic polynomial introduced in Section~\ref{sec: cyclotomic-field}.
Let $\FF_q[t,z]=\FF_q[t][z]$ be the polynomial ring with one variable $z$ over $\FF_q[t]=\uA$.
Extend $\iota:\uA \stackrel{\sim}{\rightarrow} A$ to $\uA[z]\stackrel{\sim}{\rightarrow} A[\rmx]$ by sending $z$ to $\rmx$.
Let 
\[
C_{\unfk}^*(t,z)\assign\iota^{-1}(C_{\unfk}^*(\rmx)) \in \FF_q[t,z],
\quad \text{ and put} \quad 
\uO_{\unfk} \assign \frac{\FF_q[t,z]}{(C_{\unfk}^*(t,z))} \underset{\sim}{\overset{\iota}{\longrightarrow}} \frac{A[\rmx]}{C_{\unfk}^*(\rmx)} \cong O_{\unfk}.
\]
The last isomorphism induces from the evaluation homomorphism $A[\rmx]\rightarrow O_{\unfk}$ sending $\rmx$ to $\lambda_{\unfk}$, where $\lambda_{\unfk}$ is the fixed primitive Carlitz $\unfk$-torsion in the end of Section~\ref{sec: cyclotomic-field}.

Let $\uU_{\unfk}\assign \Spec(\uO_{\unfk})$, $U_{\unfk} \assign \Spec(O_{\unfk})$, and their smooth projective models are denoted by $\uX_{\unfk}$ and $X_{\unfk}$, respectively.
Then $X_{\unfk}\times \uX_{\unfk}$ is a smooth projective surface over $\FF_q$ with an affine open subset 
\[
U_{\unfk}\times \uU_{\unfk} = \Spec(O_{\unfk}\otimes_{\FF_q}\uO_{\unfk}), \quad \text{ and } \quad 
O_{\unfk}\otimes_{\FF_q}\uO_{\unfk}
\cong \frac{O_{\unfk}[t,z]}{(C_{\unfk}^*(t,z))}.
\]

Given  $\ua \in \uA$ that is coprime to $\unfk$, we put
\begin{equation}\label{eqn: xi-a}
\xi_{\ua}\assign \varsigma_{\ua}\circ \iota: \uO_{\unfk} \overset{\sim}{\longrightarrow} O_{\unfk} \quad \text{ with } \xi_{\ua}(t)=\theta \text{ and } \xi_{\ua}(z)= C_{\ua}(\lambda_{\unfk}).
\end{equation}
We also denote the $O_{\unfk}$-valued point $(\theta,C_{\ua}(\lambda_{\unfk}))$ of $\uU_{\unfk}$ by $\xi_{\ua}$.
Moreover, let $\uK_{\unfk}$ be the fraction field of $\uO_{\unfk}$.
Then $\xi_{\ua}$ can be extended to an isomorphism $\uK_{\unfk}\stackrel{\sim}{\rightarrow} K_{\unfk}$.
For each nonnegative integer $i$, let $\xi_{\ua}^{(i)}$ be the $i$-th Frobenius twist of $\xi_{\ua}$, which is regarded as an $\FF_q$-algebra homomorphism 
\[
\xi_{\ua}^{(i)}: \uK_{\unfk}\rightarrow K_{\unfk}
\quad \text{with} \quad 
\xi_{\ua}^{(i)}(t) = \theta^{q^i} \text{ and } \xi_{\ua}^{(i)}(z) = C_{\ua}(\lambda_{\unfk})^{q^i}.
\]
On the other hand,
$\xi_{\ua}^{(i)}$ induces an $\FF_q$-morphism $\xi_{\ua}^{(i),*}:
X_{\unfk}\rightarrow \uX_{\unfk}$ with
$\xi_{\ua}^{(i),*}(U_{\unfk}) \subset \uU_{\unfk}$.
The graph of $\xi_{\ua}^{(i),*}$ forms a divisor $Z_{\ua}^{(i)}$ on the surface $X_{\unfk}\times \uX_{\unfk}$,
whose restriction to $U_{\unfk}\times \uU_{\unfk}$
corresponds to the kernel ideal of the $\FF_q$-algebra homomorphism 
\[
{\rm id}_{O_{\unfk}}\otimes \xi_{\ua}^{(i)}: O_{\unfk}\otimes_{\FF_q}\uO_{\unfk} \longrightarrow O_{\unfk}, \quad 
c \otimes 1 \mapsto c,\ 
1 \otimes t \mapsto \theta^{q^i},\ 
1\otimes z \mapsto C_{\ua}(\lambda_{\unfk})^{q^i}.
\]
These divisors enable us to describe the divisors associated with \emph{Coleman functions} introduced in the next subsection.

\subsection{Coleman functions}\label{sec: Col}

Put $\nfk\assign \iota(\unfk) \in A$ and $U'_{\unfk}\assign \Spec(O_{\unfk}[\nfk^{-1}])\subset U_{\unfk}$.
Let $\pi:X_{\unfk}\rightarrow \PP^1$ (resp.~$\upi: \uX_{\unfk} \rightarrow \uPP^1$) be the morphism to the projective $\theta$-line (resp.~$t$-line) corresponding to the inclusion $k\subset K_{\unfk}$ (resp.~$\uk\subset \uK_{\unfk}$).
Let $\infty$ (resp.~$\uinfty$) be the infinite place of $k$ (resp.~$\uk$), corresponding to the infinity point on $\PP^1$ (resp.~$\uPP^1$).
Denote by 
\[
\infty_{\unfk}\assign \sum_{\tilde{\infty} \in \pi^{-1}(\infty)} \tilde{\infty} \quad \in \Div(X_{\unfk})
\hspace{1.5cm} \text{(resp.~$\uinfty_{\unfk}\assign \sum_{\tilde{\uinfty} \in \upi^{-1}(\uinfty)} \tilde{\uinfty} \quad \in \Div(\uX_{\unfk})$)}.
\]
Also, we let $V_{\unfk}\assign U_{\unfk}'\cup \pi^{-1}(\infty) \subset X_{\unfk}$.
By \cite[Theorem 2]{A92}, \cite[Theorem 2.2.4 and 2.2.8]{Sinha97} and \cite[6.3.6 and 6.3.7]{ABP04}, we have that:

\begin{thm}\label{thm: Coleman-fun}
Keep the above notation.
For  $\ua \in \uA$ with $\gcd(\ua,\unfk)=1$, put $a = \iota(\ua) \in A$.
Let $x \in \nfk^{-1} A \setminus A$.
Recall in Section~\ref{Sec: Basic-set} we let $\lbangle ax \rbangle \in k$ be the fractional part of $ax$, i.e.~$|\lbangle ax \rbangle|_{\infty}<1$ and $ax - \lbangle ax \rbangle \in A$.
There exists a unique $g_x \in (O_{\unfk}[\nfk^{-1}])\otimes_{\FF_q}\uO_{\unfk}$ satisfying:
\begin{itemize}
\item[(1)] For every $N \in \ZZ_{\geq 0}$,
\[
g_x^{(N+1)}(\xi_{\ua}) = \prod_{\afk \in A_{+,N}}\left(1+\frac{\lbangle ax \rbangle}{\afk}\right).
\]
Here $A_{+,N}$ consists of all monic polynomials of degree $N$ in $A$; $\xi_{\ua}$ is regarded as an $O_{\unfk}$-valued point on $\uU_{\unfk}$;
and for a pure-tensor $g = c\otimes h \in (O_{\unfk}[\nfk^{-1}])\otimes_{\FF_q}\uO_{\unfk}$, the $s$-th Frobenius twist of $g$ is given by
\[
g^{(s)}\assign c^{q^s}\otimes h, \quad \forall s \in \ZZ_{\geq 0}.
\]
\item[(2)] When regarding as a rational function on $X_{\unfk}\times \uX_{\unfk}$, $g_x$ is regular on $V_{\unfk}\times \uU_{\unfk}$ and
\[
\divv(g_x)\big|_{V_{\unfk}\times \uX_{\unfk}}
=-V_{\unfk}\times \uinfty_{\unfk} + \sum_{\subfrac{\ua \in \uA,\deg \ua <\deg \unfk}{\gcd(\ua,\unfk)=1}}\sum_{N=0}^{\infty}\langle ax\rangle_N \cdot Z_{\ua}^{(N)}\big|_{V_{\unfk}\times\uX_{\unfk}}.
\]
Here for every $f \in k$ and $N\in \ZZ_{\geq 0}$,
\[
\langle f\rangle_N \assign 
\begin{cases}
1, & \text{ if\quad $\inf\big\{|f-b-\theta^{-N-1}|_{\infty}\ \big|\ b \in A\big\}<|\theta|_{\infty}^{-N-1}$;} \\
0, & \text{ otherwise.}
\end{cases}
\]
\end{itemize}
\end{thm}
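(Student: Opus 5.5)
The plan is to treat uniqueness and existence separately. Uniqueness is quick once existence is known. If $g_x$ and $g'_x$ both satisfy (1) and (2), then $u\assign g_x/g'_x$ has trivial divisor on $V_{\unfk}\times \uX_{\unfk}$. On the generic fibre $\uX_{\unfk}\otimes K_{\unfk}$, which is geometrically irreducible and proper, $u$ is therefore a constant $c\in (O_{\unfk}[\nfk^{-1}])^{\times}$. Condition (1) with $N=0$ gives $c^{q}=1$ after evaluation at $\xi_{\ua}$, so $c\in \FF_q^{\times}$. Then (1) for all $N$ forces $c=1$, since the constant is untwisted on the $\uO_{\unfk}$-side and twisted on the other.

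For existence I would follow Anderson \cite{A92}. First I would check that $\langle ax\rangle_N=0$ for $N\geq \deg\unfk$, so the divisor
\[
W\assign -V_{\unfk}\times \uinfty_{\unfk}+\sum_{\ua}\sum_{N}\langle ax\rangle_N Z_{\ua}^{(N)}
\]
is a finite sum. Next I would verify that $W$ has degree $0$ on the generic fibre: the number of pairs $(\ua,N)$ with $\langle ax\rangle_N=1$ should equal $\deg \uinfty_{\unfk}$, which amounts to counting monic-type digits of the fractional parts. Then I would show that the restriction of $W$ to the generic fibre $\uX_{\unfk/K_{\unfk}}$ is principal.

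This principality is the main obstacle. I would try Sinha's soliton/Coleman argument. Using the compatible Carlitz torsion $\lambda_{\unfk}$ and the Anderson generating function $\sum_i \exp_C(\tilde\pi a x/\theta^{i+1})t^i$, one realises the needed function as a ratio of shtuka-type functions. Equivalently, one shows that the class of $W$ in $\mathrm{Jac}(\uX_{\unfk})(K_{\unfk})$ is killed by the Stickelberger element, and that it is moreover trivial because the divisor comes from the Carlitz module's $\unfk$-torsion via the Hayes description of $\uX_{\unfk}$. Once the generic fibre is handled, principality on $V_{\unfk}\times\uX_{\unfk}$ follows: $O_{\unfk}[\nfk^{-1}]$ has trivial relevant Picard contribution after inverting $\nfk$, and vertical divisors are adjusted by units in $O_{\unfk}[\nfk^{-1}]$. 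The regularity on $V_{\unfk}\times\uU_{\unfk}$ is then read off from $W$.

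Finally I would normalise the constant and prove (1). Consider the ratio $g_x^{(N+1)}/g_x^{(N)}$ (twisting only the $O_{\unfk}$-factor, with $g_x^{(0)}=g_x$). By (2) its divisor is supported on the graphs $Z_{\ua}^{(N)}$ and their twists, so evaluating at $\xi_{\ua}$ yields a recursion. Telescoping from $N=0$ reduces (1) to the identity
\[
g_x^{(N+1)}(\xi_{\ua})\big/g_x^{(N)}(\xi_{\ua})=\prod_{\afk\in A_{+,N}}\bigl(1+\lbangle ax\rbangle/\afk\bigr)\Big/\prod_{\afk\in A_{+,N-1}}(\cdots),
\]
which I would verify via the product formula for the Carlitz exponential applied to $\tilde\pi\lbangle ax\rbangle$. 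I would fix the constant so that (1) holds at $N=0$; this is possible because the value there is a unit in $O_{\unfk}[\nfk^{-1}]$.
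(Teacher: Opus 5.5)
The paper does not prove this statement. It imports it from Anderson's two-dimensional Stickelberger theorem \cite[Theorem 2]{A92}, together with \cite[Theorems 2.2.4 and 2.2.8]{Sinha97} and \cite[6.3.6 and 6.3.7]{ABP04}. Your uniqueness argument is fine, with one slip: in characteristic $p$, $c^{q}=1$ already forces $c=1$, rather than merely putting $c$ in $\FF_q^{\times}$. Your preliminary checks are also correct, namely that the sum is finite and that the divisor has degree zero on the generic fibre.

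The existence part has a genuine gap exactly at the step you call the main obstacle, which is the substance of Anderson's theorem. If the class of the divisor in the Jacobian of $\uX_{\unfk}$ over $K_{\unfk}$ is annihilated by a Stickelberger element, that does not make it trivial. The appeal to Hayes' description of the Carlitz $\unfk$-torsion is not a mechanism. And from the $\infty$-adic Anderson generating function you never produce an element of $O_{\unfk}[\nfk^{-1}]\otimes_{\FF_q}\uO_{\unfk}$ with the prescribed divisor.

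The descent from the generic fibre to $V_{\unfk}\times\uX_{\unfk}$ is also wrong as stated. A function with the correct horizontal divisor can carry vertical components $\sum_P n_P\,(P\times\uX_{\unfk})$ with $P\in V_{\unfk}$, including points over $\infty$. Units of $O_{\unfk}[\nfk^{-1}]$ have trivial divisor on $U'_{\unfk}$, so they cannot remove such components there. A constant $c\in K_{\unfk}^{\times}$ removes them only if $\sum_P n_P P$ is principal on $V_{\unfk}$. But $\Pic(V_{\unfk})=\Pic(X_{\unfk})/\langle \text{primes above } \nfk\rangle$ is not trivial in general. For example, if $\nfk$ is irreducible of degree at least $2$, the unique prime above it is totally ramified of degree $\deg\nfk$, and the degree map gives a surjection $\Pic(V_{\unfk})\twoheadrightarrow \ZZ/(\deg\nfk)\ZZ$. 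That $g_x$ has no vertical components over $V_{\unfk}$ is part of what must be proved.

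Property (1) also does not follow from (2) plus a normalisation. Two functions satisfying (2) differ by some $c\in\Ocal(V_{\unfk})^{\times}$, which multiplies $g_x^{(N+1)}(\xi_{\ua})/g_x^{(N)}(\xi_{\ua})$ by $c^{q^{N+1}-q^{N}}$. Intersecting $\divv(g_x^{(N+1)}/g_x^{(N)})$ with the graph of $\xi_{\ua}$ gives at best the divisor of this value on $V_{\unfk}$. That pins the value down only up to a unit of $\Ocal(V_{\unfk})$, a group of rank $s-1$ where $s$ is the number of primes of $K_{\unfk}$ above $\nfk$. You do not even carry out that intersection computation. So there is no recursion to telescope; note also that $g_x^{(0)}(\xi_{\ua})=0$ whenever $\langle ax\rangle_0=1$.

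Nor can a single constant enforce (1) at $N=0$ for every $\ua$ at once. That would require knowing in advance that the ratios $(1+\lbangle ax\rbangle)/g_x^{(1)}(\xi_{\ua})$ all equal one and the same $c^{q}$ with $c\in\Ocal(V_{\unfk})^{\times}$, which is essentially (1) itself. Your stated reason, that the target value is a unit of $O_{\unfk}[\nfk^{-1}]$, is false: for $\nfk=\theta$, $x=1/\theta$, $a=1$ one gets $1+\lbangle ax\rbangle=(\theta+1)/\theta$.

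The special values in (1) are independent information and must be proved from an explicit construction of $g_x$. Invoking the product formula for the Carlitz exponential presupposes an analytic description of $g_x$ that your proposal never supplies. As written, the proposal establishes uniqueness only.
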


\begin{rem}\label{rem: good-red}
Suppose $v\nmid \nfk$. Then $v$ is unramified in $K_{\unfk}$ (see \cite[Theorem 12.8]{Ros02}). Let $w$ be the place of $K_{\unfk}$ lying above $v$ which corresponds to the restriction of the absolute value $|\cdot|_v$ on $\CC_v$, and $K_{\unfk,w}$, $O_{\unfk,w}$ be the closure of $K_{\unfk}$ and $O_{\unfk}$ in $\CC_v$, respectively.
Let $\pfk_w$ be the maximal ideal of $O_{\unfk,w}$ and $\FF_w:=O_{\unfk,w}/\pfk_w$ be the residue field of $O_{\unfk,w}$.
The description of the divisor of $g_x$ restricting to $V_{\unfk}\times \uX_{\unfk}$ in Theorem~\ref{thm: Coleman-fun} (2) says in particular that the function 
\[
\bar{g}_x\assign g_x \bmod \pfk_w \quad \in \FF_w\otimes_{O_{\unfk}[\nfk^{-1}]} \Big(O_{\unfk}[\nfk^{-1}] \otimes_{\FF_q}\uO_{\unfk}\Big) = \FF_w\otimes_{\FF_q}\uO_{\unfk},
\]
which is the reduction of $g_x$ modulo $\pfk_w$, is nonzero.
Moreover, take $\ua \in \uA$ with $\gcd(\ua,\unfk)=1$.
Let $\ell$ be the order of $v \bmod \nfk$ in $(A/\nfk)^{\times}$.
Identifying $\FF_w$ as a subfield of $\CC_v$ via the Teichm\"uller embedding in \eqref{eqn: Teichmuller}, by \cite[Remark~3.3.3]{Ch25} the special value $\bar{g}^{(s+1)}_x(\xi_{\ua})$ for $s \in \ZZ_{\geq 0}$ coincides with the  ``geometric Gauss sum'' (introduced in \cite[Definition~3.1.1 and the Frobenius twists in (5.3)]{Ch25}), i.e.
\begin{equation}\label{eqn: Geo-GS}
\bar{g}_x^{(s+1)}(\xi_{\ua}) = \Gcal_{ax}^{\geo,(s)}
\assign 1+ \sum_{u \in \FF_w^{\times}} \omega_C\big(\bar{C}_{\underline{(v^{\ell}-1)ax}}(u^{-1})\big)\cdot \eT(u)^{q^{s}}.
\end{equation}
Here 
\begin{itemize}
    \item $a=\ua(\theta)=\iota(\ua) \in A$; \item $\omega_C: \FF_w \stackrel{\sim}{\rightarrow}\uC[\uv^{\ell}-1]\subset \bar{k}$ is the ``geomtric Teichm\"uller character'' (see \cite[(5.2)]{Ch25});
    \item $\bar{C}$ is the reduction of the Carlitz module modulo $\pfk_w$, i.e.~
\[
\bar{C}_{\ub}(c \bmod \pfk_w) \assign C_{\ub}(c) \bmod \pfk_w,
\quad \forall c \bmod \pfk_w \in \frac{O_{\unfk,w}}{\pfk_w} = \FF_w;
\] 
\item $\underline{(v^{\ell}-1)ax} = \iota^{-1}((v^{\ell}-1)ax) \in \uA$;
\item $\eT: \FF_w \hookrightarrow O_{\unfk,w}$ is the (usual) Teichm\"uller embedding given in \eqref{eqn: Teichmuller}.
\end{itemize}
Note that by \cite[Proposition 3.2.1 (2)]{Ch25} we know that $\Gcal_{vax}^{\geo,(s+d)} = \Gcal_{ax}^{\geo,(s)}$, which says that 
\begin{equation}\label{eqn: bar-g-inv}
\bar{g}_x^{(s+d+1)}(\xi_{\uv\ua})=
\bar{g}_{x}(\xi_{\ua})^{(s+1)}, \quad \forall s \in \ZZ_{\geq 0}.
\end{equation}
Moreover, as a consequence of the Gross--Koblitz--Thakur formula in \cite[Theorem~5.1.1]{Ch25}, we have that 
\begin{equation}\label{eqn: Gsum-kv}
\bar{g}^{(s+1)}_x(\xi_{\ua}) \in k_v^{\times} \cap \bar{k}^{\times}
\quad \text{ for every $s \in \ZZ_{\geq 0}$.}
\end{equation}
\end{rem}

\subsection{ Soliton \texorpdfstring{$t$}{t}-motives and their \texorpdfstring{$v$}{v}-adic periods}\label{sec: v-period}

Recall that $X_{\unfk}\times \uX_{\unfk}$ is a smooth projective surface over $\FF_q$. Thus every Weil divisor $D$ is a Cartier divisor, and the corresponding sheaf $\Ocal_{X_{\unfk}\times \uX_{\unfk}}(D)$ is locally free of rank one over the structure sheaf $\Ocal_{X_{\unfk}\times \uX_{\unfk}}$ of the surface $X_{\unfk}\times \uX_{\unfk}$.

Recall that the \emph{geometric diamond bracket on $k$} is given by (see \cite[5.5.1.]{ABP04})
\[
\langle f\rangle_{\geo} \assign \sum_{N=0}^{\infty} \langle f \rangle_N \ \in \{0,1\}, \quad \forall\ f \in k.
\]
In particular $\langle f+a\rangle_{\geo} = \langle f \rangle_{\geo}$ for every $a \in A$, and (see \cite[5.5.5.]{ABP04})
\begin{align}\label{eqn: geo-bra-ref}
\sum_{\epsilon \in \FF_q^{\times}} \langle \epsilon f \rangle_{\geo} & = \begin{cases}   
1, & \text{ if $f \notin A$;}\\
0, & \text{ otherwise.}
\end{cases}
\end{align}
Given $x \in \nfk^{-1}A \setminus A$,
let 
\[
W_x \assign
\sum_{\subfrac{\ua \in \uA,\deg \ua <\deg \unfk}{\gcd(\ua,\unfk)=1}}\sum_{N=1}^{\infty}\langle ax\rangle_N \cdot 
\sum_{i=0}^{N-1}Z_{\ua}^{(i)}
\quad \text{ and }\quad 
\tilde{\Xi}_x
\assign 
\sum_{\subfrac{\ua \in \uA,\deg \ua <\deg \unfk}{\gcd(\ua,\unfk)=1}}\langle ax\rangle_{\geo} \cdot Z_{\ua},
\]
Then $W_{x+a} = W_{x}$ and $\tilde{\Xi}_{x+a} = \tilde{\Xi}_{x}$ for every $a \in A$. Moreover, Theorem~\ref{thm: Coleman-fun}~(2) can be rewritten as (cf.~\cite[6.3.9]{ABP04})
\[
\divv(g_x)\big|_{V_{\unfk}\times \uX_{\unfk}}
=-V_{\unfk}\times \uinfty_{\unfk} + \tilde{\Xi}_x\big|_{V_{\unfk}\times\uX_{\unfk}}+W^{(1)}_x\big|_{V_{\unfk}\times\uX_{\unfk}}-
W_x\big|_{V_{\unfk}\times\uX_{\unfk}}.
\]
\begin{defn}\label{defn: Soliton M-x}
(cf.~\cite[Corollary 3.3.8.1]{Sinha97}, \cite[4.2.2]{BP02}, and \cite[6.4.2]{ABP04}) Define
\[
\Mcal_x \assign \Ocal_{X_{\unfk}\times \uX_{\unfk}}(W_x)(U'_{\unfk}\times \uU_{\unfk}),
\] 
which is a locally free of rank one module over $O_{\unfk}[\nfk^{-1}]\otimes_{\FF_q}\uO_{\unfk}$ equipped with a $\tau$-action defined by
\[
\tau \cdot h \assign g_x \cdot h^{(1)}, \quad \forall h \in \Mcal_x.
\]
The \emph{soliton $t$-motive associated with $x$ over $K_{\unfk}$} is given by
\[
M_x \assign K_{\unfk} \otimes_{O_{\unfk}[\nfk^{-1}]} \Mcal_x,
\]
which is a locally free module of rank one over $K_{\unfk}\otimes_{\FF_q}\uO_{\unfk}$ with the extended $\tau$-action.
\end{defn}

\begin{rem}\label{rem: prop-Mx}
${}$
\begin{itemize}
    \item[(1)] $M_x$ is actually a \emph{``CM $t$-motive with CM type $(\uK_{\unfk},\Xi_{x})$''},
    where 
    \[
    \Xi_{x} = \sum_{\subfrac{\ua \in A,\, \deg \ua < \deg \unfk}{\gcd(\ua,\unfk)=1}}\langle ax \rangle_{\geo}\cdot \xi_{\ua}.
    \]
    We refer the readers to \cite[Section 3]{BCPW22} and \cite[Lemma 5.3.1]{Wei26} for further details.
    \item[(2)] Observe that $\Mcal_{x+a} = \Mcal_x$ and $M_{x+a} = M_x$ for every $a \in A$.
    \item[(3)] Suppose $v \nmid \nfk$ as above.
Keep the notation as in Remark~\ref{rem: good-red}.
Then
\[
\Mcal_{x,w}\assign O_{\unfk,w}\otimes_{O_{\unfk}[\nfk^{-1}]} \Mcal_x \quad \text{ is a model $M_x$ at $w$.}
\]
Since $\bar{g}_x \neq 0$ by \eqref{eqn: Gsum-kv}, the  map $\tau:\overline{\Mcal}_{x,w}\rightarrow \overline{\Mcal}_{x,w}$ is injective, whence $\Mcal_{x,w}$ is a good model of $M_x$ at $w$.
\end{itemize}
\end{rem}

In order to study the $v$-isocrystal periods of $M_x$, we first provide a concrete description of the isomorphism $\delta_{\Mcal_{x,w}}$ in Lemma~\ref{lem: key-iso}.
Observe that for every $i \in \ZZ_{\geq 0}$, we have
\[
g_x^{(i)} \in O_{\unfk}[\nfk^{-1}]\otimes_{\FF_q} \uO_{\unfk} \subset O_{\unfk,w}\otimes_{\FF_q} \uO_{\unfk}
\quad \text{ and } \quad 
\bar{g}_x^{(i)} \in \Big(\big(\FF_w \otimes_{\FF_q} \uO_{\unfk} \big)\setminus \{0\}\Big) \subset \Big(\FF_w(\!(\uv)\!)\otimes_{\FF_q[t]}\uO_{\unfk}\Big)^{\times}.
\]
Hence $\displaystyle \frac{g_x^{(i)}}{\bar{g}_x^{(i)}} \in O_{\unfk,w}[\![\uv,\uv^{-1}\} \otimes_{\FF_q[t]} \uO_{\unfk}$ is congruent to $1$ modulo $\pfk_w$.
Define
\[
\psi_x\assign
\prod_{i=0}^{\infty}\frac{g_x^{(i)}}{\bar{g}_x^{(i)}}, \quad \text{ which converges in }\   O_{\unfk,w}[\![\uv, \uv^{-1}\}
\otimes_{\FF_q[t]}\uO_{\unfk}.
\]
\begin{lem}\label{lem: psi-x}
Keep the notation as above. We have
\[
\psi_x \in \Big(O_{\unfk,w}[\![\uv,\uv^{-1}\}[\omega_v(\uv)^{-1}]\otimes_{\FF_q[t]}\uO_{\unfk}\Big)^{\times}.
\]
\end{lem}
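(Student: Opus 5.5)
The plan is to mimic the Carlitz case. There $\omega_v(\uv)=u_N(\uv)\,\omega^{(N)}_{\theta,v}(\uv)$ with $u_N$ a unit, and each factor $(t-\theta^{q^i})/(t-\varepsilon_v^{q^i})$ is invertible after inverting $\omega_v(\uv)$, by \eqref{eqn: t-theta-inv}. The idea is to show that $\psi_x$ agrees, up to a unit of $O_{\unfk,w}[\![\uv,\uv^{-1}\}\otimes_{\FF_q[t]}\uO_{\unfk}$ and finitely many invertible factors, with a product whose inverse is visibly regular.

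First I would split the infinite product $\psi_x=\prod_{i=0}^{N-1}\frac{g_x^{(i)}}{\bar g_x^{(i)}}\cdot\psi_x^{(N)}$ with $N$ large, say a multiple of $d$ exceeding the finitely many $N$ for which some $\langle ax\rangle_N\neq 0$. The denominators $\bar g_x^{(i)}$ are units, since they are nonzero elements of $\FF_w\otimes_{\FF_q}\uO_{\unfk}$ and hence units after inverting $\uv$ (they lie in the field $\FF_w(\!(\uv)\!)\otimes_{\FF_q[t]}\uO_{\unfk}$, a product of fields). The factors $g_x^{(i)}$ are handled by Theorem~\ref{thm: Coleman-fun}(2). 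On $\Spec(O_{\unfk,w}\otimes\uO_{\unfk})$, the divisor of $g_x^{(i)}$ is supported on graphs $Z^{(j)}_{\ua}$, via $\tilde\Xi_x^{(i)}+W_x^{(i+1)}-W_x^{(i)}$, with no contribution from $\uinfty$ on this affine part. Each $Z^{(j)}_{\ua}$ is cut out, after base change to the ring of $\uv$-expansions, by an element of the form $(t-\theta^{q^j})\cdot(\text{unit})$ on the $\ua$-component. So $g_x^{(i)}$ equals a unit times a product of factors $(t-\theta^{q^j})/(t-\varepsilon_v^{q^j})$, and these are invertible in the localized ring by \eqref{eqn: t-theta-inv}. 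Here I would use that over $O_{\unfk,w}$ the étale algebra $O_{\unfk,w}[\![\uv]\!]\otimes_{\FF_q[t]}\uO_{\unfk}$ splits as a product indexed by the embeddings $\xi_{\ua}$, since $v$ is unramified in $K_{\unfk}$.

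For the tail $\psi_x^{(N)}$, I would compare it directly with the Carlitz-type products $\omega_{\theta,v}^{(N)}$. Componentwise along $\xi_{\ua}$, the functional equation $\tau$-twisting gives
\[
\psi_x^{(1)}=\psi_x\cdot \frac{\bar g_x}{g_x}.
\]
Hence $\psi_x\,\omega_v(\uv)^{m}$ for suitable $m$ (namely the number of zeros, which is $\langle ax\rangle_{\geo}$ on each component) satisfies a twisting relation whose factor is a unit. Alternatively, and more concretely, I would use Theorem~\ref{thm: Coleman-fun}(1) evaluated at $\xi_{\ua}^{(i)}$ to see that $g^{(i)}_x$ vanishes only at $t=\theta^{q^{i-1-N'}}$-type points with $N'<N$. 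Thus for $i\ge N$ the factor $g_x^{(i)}/\bar g_x^{(i)}$ is of the form $1+(\text{topologically nilpotent})$ with no zeros in the region $|\uv|\le$ const, and the tail converges to a unit of $O_{\unfk,w}[\![\uv,\uv^{-1}\}\otimes\uO_{\unfk}$.

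The main obstacle is this last step: to control the inverse of $\psi_x^{(N)}$ uniformly in the ring $O_{\unfk,w}[\![\uv,\uv^{-1}\}$, one must show that the zeros of the $g_x^{(i)}$ accumulate only at the points $\uv=v^{q^{dj}}$ and are killed by $\omega_v(\uv)$. I would handle this by matching each zero $Z^{(j)}_{\ua}$ of $g_x^{(j)}$ with the corresponding factor $1-(\theta-\varepsilon_v)^{q^j}/(t-\varepsilon_v^{q^j})$ of $\omega_{\theta,v}$. Then $\psi_x\cdot\prod(\text{such factors})^{-1}$ becomes a convergent product of units congruent to $1$ mod $\pfk_w$, and the claim follows from $\omega_v=u_0\,\omega_{\theta,v}$.
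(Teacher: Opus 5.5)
\textbf{Gap: the tail is not a unit without inverting $\omega_v(\uv)$.} Your argument breaks at the tail $\psi_x^{(N)}$, which is where the whole content of the lemma lies.

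The zeros of $g_x^{(i)}$ come from Theorem~\ref{thm: Coleman-fun}~(2), not (1). The function $g_x^{(i)}$ vanishes along $Z_{\ua}^{(i+N')}$ whenever $\langle ax\rangle_{N'}=1$, i.e.\ at $t=\theta^{q^{i+N'}}$, not at $t=\theta^{q^{i-1-N'}}$. In $O_{\unfk,w}[\![\uv,\uv^{-1}\}\otimes_{\FF_q[t]}\uO_{\unfk}$ this divisor is visible exactly when $d\mid i+N'$. It then appears as a simple zero at $\uv=v^{q^{i+N'}}$ on exactly one component of the split ring, and in general that is not the $\xi_{\ua}$-component.

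By \eqref{eqn: geo-bra-ref}, $\langle ax\rangle_{\geo}=1$ for some $\ua$. Hence for every $N$ the tail $\psi_x^{(N)}$ has infinitely many zeros, accumulating at $\uv=0$. So your claim that the factors with $i\ge N$ are zero-free, and that $\psi_x^{(N)}$ converges to a unit of the unlocalized ring, is false. These infinitely many zeros are exactly why one must invert the infinite product $\omega_v(\uv)$ rather than finitely many factors $t-\theta^{q^j}$. Likewise, a twisting relation of the form $\Psi^{(d)}=\Psi\cdot(\text{unit})$ cannot by itself show that $\Psi$ is invertible.

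Your last paragraph has the right instinct, but its decisive steps are only asserted:
\begin{itemize}
\item The factor $1-(\theta-\varepsilon_v)^{q^j}/(t-\varepsilon_v^{q^j})$ vanishes on \emph{every} component when $d\mid j$, whereas $g_x^{(i)}$ vanishes only on some of them. The matching therefore has to be done componentwise, via idempotents.
\item Congruence to $1$ modulo $\pfk_w$ does not imply invertibility in $O_{\unfk,w}[\![\uv,\uv^{-1}\}$. The factor $1-(\theta-\varepsilon_v)^{q^j}/(t-\varepsilon_v^{q^j})$ is itself $\equiv 1$ and has a zero.
\item To show that your quotients, and their infinite product, are units you need a genuine rigid-analytic argument. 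The quotients are not uniformly close to $1$ as $|\uv|\to 0$, so you must work annulus by annulus (using $\bar g_x^{(i)}\neq 0$ on each component and Weierstrass preparation) and then bound the inverse near $|\uv|=1$.
\item The matched factors form only a partial subproduct of $\omega_{\theta,v}$. Its invertibility after inverting $\omega_v$ needs the extra observation that the complementary subproduct also lies in the unlocalized ring.
\end{itemize}

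\textbf{How the paper avoids this.} The paper uses a norm trick instead of any zero-by-zero analysis.
\begin{itemize}
\item By Theorem~\ref{thm: Coleman-fun}~(2), $\sum_{b\in(A/\nfk)^{\times}}\divv(g_{bx})$ restricted to $U'_{\unfk}\times\uX_{\unfk}$ is a combination of the divisors $\divv(t-\theta^{q^j})$.
\item Hence $\prod_b g_{bx}=u\cdot\prod_j(t-\theta^{q^j})^{n_j}$ with $u\in O_{\unfk}[\nfk^{-1}]^{\times}$.
\item Reducing modulo $\pfk_w$ and forming the twisted infinite product gives the closed form $\prod_b\psi_{bx}=\prod_{i\geq 0}u_1^{q^i}\cdot\prod_j\big(\omega_{\theta,v}^{(j)}(\uv)\big)^{n_j}$.
\item This is a unit once $\omega_v(\uv)$ is inverted, since $\omega_v=u_j\,\omega_{\theta,v}^{(j)}$ with $u_j$ a unit.
\item Each $\psi_{bx}$ already lies in the unlocalized ring, and their product is a unit, so each factor, in particular $\psi_x$, is a unit.
\end{itemize}
No control of individual zeros, and no convergence of inverses, is needed.
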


\begin{proof}
Recall that $U_{\unfk}' = \Spec(O_{\unfk}[\nfk^{-1}])$. Observe that for $N \in \ZZ_{\geq 0}$, 
\[
\divv(t-\theta^{q^N})\big|_{U'_{\unfk}\times \uX_{\unfk}} = -(q-1)\cdot U'_{\unfk}\times \uinfty_{\unfk} + \sum_{\subfrac{\ua \in \uA,\, \deg \ua < \deg \nfk}{\gcd(\ua,\nfk)=1}} Z_{\ua}^{(N)}\big|_{U'_{\unfk}\times \uX_{\unfk}}.
\]
Put
$\phi(\nfk)\assign \#((A/\nfk)^{\times})$
and 
$\phi^+_i(\nfk)\assign 
\#\{a \in A_{+,i}\mid \text{gcd}(a,\nfk)=1\}$ for $0\leq i <\deg \nfk$.
One has $\sum_{i=0}^{\deg \nfk-1} \phi_i^+(\nfk) = \phi(\nfk)/(q-1)$. Suppose $x= x'/\mfk$ with $\mfk \in A_+$, $\mfk\mid \nfk$, and $\gcd(x',\mfk)=1$.
Then Theorem~\ref{thm: Coleman-fun}~(2) implies that 
\begin{align*}
& \hspace{-1cm} \sum_{b \in (A/\nfk)^{\times}}\divv(g_{bx})\Big|_{U'_{\unfk}\times \uX_{\unfk}} \\
& = -\phi(\nfk)\cdot U'_{\unfk}\times \uinfty_{\unfk}
+
\sum_{N=0}^{\infty}
\sum_{\subfrac{\ua \in \uA,\deg \ua < \deg \unfk}{\gcd(\ua,\unfk)=1}}\sum_{b \in (A/\nfk)^{\times}}\big\langle \frac{abx'}{\mfk}\big\rangle_N \cdot Z_{\ua}^{(N)}\big|_{U_{\unfk}'\times \uX_{\unfk}} \\
& = \frac{\phi(\nfk)}{\phi(\mfk)}
\left(
-\phi(\mfk)\cdot U_{\unfk}'\times \uinfty_{\unfk}
+ 
\sum_{i=0}^{\deg \mfk-1} \phi_{i}^+(\mfk)\cdot 
\sum_{\subfrac{\ua \in \uA,\deg \ua < \deg \unfk}{\gcd(\ua,\unfk)=1}} Z_{\ua}^{(\deg \mfk -i - 1)}\big|_{U_{\unfk}'\times \uX_{\unfk}}
\right)\\
& = \frac{\phi(\nfk)}{\phi(\mfk)} \cdot \sum_{i=0}^{\deg \mfk -1} \phi_{i}^+(\mfk) \cdot \divv(t-\theta^{q^{\deg \mfk -i -1}})\Big|_{U'_{\unfk}\times \uX_{\unfk}}.
\end{align*}
Thus there exists $u \in O_{\unfk}[\nfk^{-1}]^{\times}$ such that 
\[
\prod_{b \in (A/\nfk)^{\times}}g_{bx}
= u \cdot \left(
\prod_{i=0}^{\deg \mfk -1}
(t-\theta^{q^{\deg \mfk - i - 1}})^{\phi^+_i(\mfk)}\right)^{\phi(\nfk)/\phi(\mfk)}.
\]
Take $u_0 \in \FF_w^{\times}$ such that $u-u_0 \in \pfk_w$, and let $u_1 = u/u_0$, which is a $1$-unit in $O_{\unfk,w}^{\times}$. We then obtain that
\[
\prod_{b \in (A/\nfk)^{\times}}\bar{g}_{bx} = u_0\cdot \left(
\prod_{i=0}^{\deg \mfk -1}
(t-\varepsilon_v^{q^{\deg \mfk -i-1}})^{\phi_i^+(\mfk)}
\right)^{\phi(\nfk)/\phi(\mfk)},
\]
whence
\begin{align*}
\prod_{b \in (A/\nfk)^{\times}}\psi_{bx} = \prod_{b \in (A/\nfk)^{\times}}\prod_{i=0}^{\infty}\frac{g_{bx}^{(i)}}{\bar{g}_{bx}^{(i)}}
& = \ \prod_{i=0}^{\infty}u_1^{q^i}
\cdot 
\left(
\prod_{i=0}^{\deg \mfk -1}
\omega_{\theta,v}^{(\deg \mfk - i - 1)}(\uv)^{\phi^+_i(\mfk)}\right)^{\phi(\nfk)/\phi(\mfk)} \\
& \hspace{3cm} \in
\Big(O_{\unfk,w}[\![\uv,\uv^{-1}\}[\omega_v(\uv)^{-1}]\otimes_{\FF_q[t]}\uO_{\unfk}\Big)^{\times}.
\end{align*}
Therefore the desired result holds.
\end{proof}

Note that $W_x$ is an effective divisor, which means that $O_{\unfk}[\nfk^{-1}]\otimes_{\FF_q}\uO_{\unfk} \subset \Mcal_x$.
On the other hand, for each $\ua \in \uA$ with $\gcd(\ua,\unfk)=1$ and $i \in \ZZ_{\geq 0}$, the ideal of $O_{\unfk}\otimes_{\FF_q} \uO_{\unfk}$ corresponding to the divisor $Z_{\ua}^{(i)}\big|_{U_{\unfk}\times \uU_{\unfk}}$ contains $(t-\theta^{q^i})$, from which one has
\[
\left(\prod_{\subfrac{a \in A,\, \deg a < \deg \nfk}{\gcd(a,\nfk)=1}} \prod_{N=1}^{\infty} \prod_{i=0}^{N-1} (t-\theta^{q^i})^{\langle ax\rangle_N}\right) \cdot \Mcal_x \subset O_{\unfk}[\nfk^{-1}]\otimes_{\FF_q}\uO_{\unfk}.
\]
Consider the following commutative diagram
\[
\xymatrix{
\uO_{\unfk} \ar@{^{(}->}[r] \ar@{=}[d] & O_{\unfk,w}\otimes_{\FF_q} \uO_{\unfk} \ar@{^{(}->}[r] \ar[d]_{\bmod \pfk_w} & \Mcal_{x,w}  \ar[d]^{\bmod \pfk_w} \\ 
\uO_{\unfk} \ar@{^{(}->}[r] & \FF_w \otimes_{\FF_q}\uO_{\unfk} \ar@{^{(}->}[r] & \overline{\Mcal}_{x,w}.
}
\]
As $(t-\theta^{q^i}) \in O_{\unfk,w}[\![\uv,\uv^{-1}\}[\omega_v(\uv)^{-1}]^{\times}$ for every $i \in \ZZ_{\geq 0}$ by \eqref{eqn: t-theta-inv} and 
\[
\FF_w(\!(\uv)\!)\otimes_{\FF_q[t]} \uO_{\unfk} = \FF_w(\!(\uv)\!) \otimes_{\FF_w[t]} \overline{\Mcal}_{x,w}, \quad \FF_w(\!(\uv)\!) \subset O_{\unfk,w}[\![\uv,\uv^{-1}\},
\]
tensoring the above two horizontal inclusions by $O_{\unfk,w}[\![\uv,\uv^{-1}\}[\omega_v(\uv)^{-1}]$ induces the following two isomorphisms
\begin{equation}\label{eqn: On-Mw-1}
O_{\unfk,w}[\![\uv,\uv^{-1}\}[\omega_{v}(\uv)^{-1}]\otimes_{\FF_q[t]}\uO_{\unfk} \cong 
O_{\unfk,w}[\![\uv,\uv^{-1}\}[\omega_{v}(\uv)^{-1}]\otimes_{O_{\unfk,w}[t]} \Mcal_{x,w}
\end{equation}
and
\begin{equation}\label{eqn: On-Mw-2}
O_{\unfk,w}[\![\uv,\uv^{-1}\}[\omega_{v}(\uv)^{-1}]\otimes_{\FF_q[t]}\uO_{\unfk} \cong 
O_{\unfk,w}[\![\uv,\uv^{-1}\}[\omega_{v}(\uv)^{-1}]\otimes_{\FF_w[t]} \overline{\Mcal}_{x,w}.
\end{equation}
In other words, the isomorphism $\delta_{\Mcal_{x,w}}$ is uniquely determined by the image of elements in $\uO_{\unfk}$, which can be described as follows:
\begin{prop}\label{prop: sp-iso-x}
For each $h \in \uO_{\unfk} \subset \Mcal_{x,w}$, we have 
\[
\delta_{\Mcal_{x,w}}(h) = \psi_x^{-1}\cdot h
\ \in O_{\unfk,w}[\![\uv,\uv^{-1}\}[\omega_v(\uv)^{-1}]\otimes_{\FF_w[t]}
\overline{\Mcal}_{x,w}.
\]
\end{prop}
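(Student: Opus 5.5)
The plan is to use the uniqueness in Lemma~\ref{lem: key-iso}. Write $\mathcal{B}\assign O_{\unfk,w}[\![\uv,\uv^{-1}\}[\omega_v(\uv)^{-1}]$. Via the identifications \eqref{eqn: On-Mw-1} and \eqref{eqn: On-Mw-2}, both sides of $\delta_{\Mcal_{x,w}}$ become the rank one module $\mathcal{B}\otimes_{\FF_q[t]}\uO_{\unfk}$. Since $\uO_{\unfk}$ generates this module, any $\mathcal{B}$-linear map $\delta$ between them is determined by its values on $\uO_{\unfk}$. I will therefore define
\[
\delta(h)\assign \psi_x^{-1}\cdot h, \qquad h\in \uO_{\unfk},
\]
extend it $\mathcal{B}$-linearly, and check that $\delta$ has the three properties that characterize $\delta_{\Mcal_{x,w}}$.

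The first property is bijectivity. This holds because $\psi_x$ is a unit in $\mathcal{B}\otimes_{\FF_q[t]}\uO_{\unfk}$ by Lemma~\ref{lem: psi-x}.

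The second property is that $\delta$ reduces to the identity modulo $\pfk_w$. Each factor $g_x^{(i)}/\bar{g}_x^{(i)}$ is congruent to $1$ modulo $\pfk_w$, so $\psi_x\equiv 1$, and hence $\delta\equiv\id$ modulo $\pfk_w$.

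The third property is $\tau^d$-equivariance, and this is the main computation. On $\Mcal_{x,w}$ we have $\tau h = g_x h^{(1)}$. Iterating gives
\[
\tau^d h = \Big(\prod_{i=0}^{d-1} g_x^{(i)}\Big)\, h^{(d)} ,
\]
and for $h\in\uO_{\unfk}$ the twist is trivial, so $h^{(d)}=h$. On $\overline{\Mcal}_{x,w}$ the same iteration holds with $\bar{g}_x$ in place of $g_x$. We need $\delta(\tau^d h)=\tau^d\delta(h)$, where $\tau^d$ acts on the coefficients in $\mathcal{B}$ by the $(d)$-twist. The left side is
\[
\Big(\prod_{i=0}^{d-1}g_x^{(i)}\Big)\psi_x^{-1}h .
\]
The right side is
\[
(\psi_x^{-1})^{(d)}\Big(\prod_{i=0}^{d-1}\bar{g}_x^{(i)}\Big)h .
\]
So equivariance reduces to the identity
\[
\psi_x = \psi_x^{(d)}\cdot\prod_{i=0}^{d-1}\frac{g_x^{(i)}}{\bar{g}_x^{(i)}} .
\]
This follows from the definition of $\psi_x$ as an infinite product once we know that $(\bar{g}_x^{(i)})^{(d)}=\bar{g}_x^{(i+d)}$. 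Here the twist on $\FF_w$ coefficients is $c\mapsto c^{q^d}$ under the Teichmüller embedding, which is compatible with how $\bar{g}_x^{(i)}$ is defined.

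The delicate point is that this manipulation is legitimate in $\mathcal{B}\otimes_{\FF_q[t]}\uO_{\unfk}$. Two things must be checked. First, the twist $(\cdot)^{(d)}$ has to be continuous, so that it commutes with the convergent infinite product. Second, the twist must be compatible with the extension to $\omega_v(\uv)^{-1}$ and with the $\uO_{\unfk}$-factor, on which $(\cdot)^{(d)}$ acts trivially. I would verify continuity using the convergence already established in the definition of $\psi_x$: the factors tend to $1$ $\pfk_w$-adically, and raising to the $q^d$-th power preserves this.

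For a general element $m=b\otimes h$ with $b\in\mathcal{B}$, equivariance follows by semilinearity: $\tau^d(bm)=b^{(d)}\tau^d m$ on both sides, and $\delta$ is $\mathcal{B}$-linear. Thus $\delta$ is a $\mathcal{B}[\tau^d]$-module isomorphism that reduces to the identity modulo $\pfk_w$. By the uniqueness in Lemma~\ref{lem: key-iso}, $\delta=\delta_{\Mcal_{x,w}}$, which proves the stated formula.
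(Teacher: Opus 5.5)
Your proposal is correct and follows the paper's argument. The paper's proof is exactly the computation $\psi_x^{-1}(\tau^d h)=\bar{g}_{x,v}\,(\psi_x^{(d)})^{-1}h=\tau^d(\psi_x^{-1}h)$, combined implicitly with the uniqueness in Lemma~\ref{lem: key-iso}. You additionally spell out what the paper leaves tacit: invertibility via Lemma~\ref{lem: psi-x}, the congruence $\psi_x\equiv 1 \bmod \pfk_w$, and the compatibility of the $(d)$-twist with the infinite product.
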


\begin{proof}
As 
\begin{align*}
\psi_x^{-1}\cdot (\tau^d h) = \psi_x^{-1}\cdot (\prod_{i=0}^{d-1}g_x^{(i)}) \cdot h
&= (\prod_{i=0}^{d-1}\bar{g}_x^{(i)}) \cdot (\psi_x^{(d)})^{-1} h \\
& = \tau^d \cdot \big(\psi_x^{-1} h)\ \in 
O_{\unfk,w}[\![\uv,\uv^{-1}\}[\omega_v(\uv)^{-1}]\otimes_{\FF_w[t]}
\overline{\Mcal}_{x,w},
\end{align*}
The desired result holds.
\end{proof}
Put
\begin{equation}\label{eqn:g-x-v}
g_{x,v}\assign \prod_{i=0}^{d-1}g_{x}^{(i)} \quad \text{ and } \quad 
\bar{g}_{x,v}\assign \prod_{i=0}^{d-1}\bar{g}_{x}^{(i)}.
\end{equation}

\begin{cor}
Through~\eqref{eqn: On-Mw-1} and \eqref{eqn: On-Mw-2} we identify $H_{\dR}(M_{x},K_{\unfk,w})$ and $H_{\cris}(M_{x},K_{\unfk,w})$ with
\[
\frac{g_{x,v}\cdot K_{\unfk,w}\otimes_{\FF_q}\uO_{\unfk}}{(t-\theta)g_{x,v}\cdot K_{\unfk,w}\otimes_{\FF_q}\uO_{\unfk}}
\quad \text{ and }\quad 
\frac{\bar{g}_{x,v}\cdot K_{\unfk,w}\otimes_{\FF_q}\uO_{\unfk}}{(t-\theta)\bar{g}_{x,v}\cdot K_{\unfk,w}\otimes_{\FF_q}\uO_{\unfk}}, \quad \text{respectively.}
\]
Then the de~Rham--crystalline isomorphism $\Phi_{M_{x},w}$ is given by
\[
\Phi_{M_{x},w}\big(g_{x,v}\cdot c\otimes h \bmod (t-\theta)\big) = (\psi_{x}^{(d)})^{-1} \cdot \bar{g}_{x,v} \cdot c\otimes h \bmod (t-\theta), \quad \forall c \otimes h \in K_{\unfk,w}\otimes_{\FF_q}\uO_{\unfk}.
\]
\end{cor}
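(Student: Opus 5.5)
The plan is to unwind the definitions in Proposition~\ref{prop: c-d-iso} for the model $\Mcal_{x,w}$, using the explicit form of $\delta_{\Mcal_{x,w}}$ from Proposition~\ref{prop: sp-iso-x}.

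First I identify the two sides. Since $\Mcal_x$ has rank one, generated (after tensoring with $O_{\unfk,w}[\![\uv,\uv^{-1}\}[\omega_v(\uv)^{-1}]$) by $\uO_{\unfk}$ via \eqref{eqn: On-Mw-1}, the module $O_{\unfk,w}\cdot\tau^d\Mcal_{x,w}$ becomes $g_{x,v}\cdot(O_{\unfk,w}\otimes_{\FF_q}\uO_{\unfk})$ inside the big ring tensor $\uO_{\unfk}$. Indeed $\tau^d h=\prod_{i=0}^{d-1}g_x^{(i)}\, h^{(d)}=g_{x,v}h^{(d)}$, and $h\mapsto h^{(d)}$ is bijective on $\uO_{\unfk}$ and on the coefficient field after extending scalars. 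Reducing modulo $(t-\theta)$, i.e.\ modulo $\uv-v$, and inverting $\pfk_w$ gives the stated description of $H_{\dR}(M_x,K_{\unfk,w})$. The crystalline side is handled the same way: $\tau^d\overline{\Mcal}_{x,w}=\bar g_{x,v}\cdot(\FF_w\otimes\uO_{\unfk})$ via \eqref{eqn: On-Mw-2}, and the base change $F\otimes_{\theta\mapsfrom t}$ gives the stated quotient. By Lemma~\ref{lem: dR-id}, replacing $\tau$ by $\tau^d$ is harmless.

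Next I compute $\tilde\delta$ on a generator. I take $m=\tau^d(c\otimes h)=g_{x,v}\cdot(c^{q^d}\otimes h)$, with $c\otimes h$ a pure tensor. Since $\delta_{\Mcal_{x,w}}$ is $\tau^d$-equivariant and semilinear over the big ring, Proposition~\ref{prop: sp-iso-x} gives
\[
\delta(\tau^d(c\otimes h))=\tau^d\big(\psi_x^{-1}\,c\otimes h\big)=(\psi_x^{(d)})^{-1}\,\bar g_{x,v}\,c^{q^d}\otimes h.
\]
This is exactly the computation already carried out in the proof of that proposition. Renaming $c^{q^d}$ as $c$, which is legitimate because Frobenius is surjective on the perfect closure and both sides are $K_{\unfk,w}$-linear after base change, I get
\[
g_{x,v}\cdot c\otimes h\ \longmapsto\ (\psi_x^{(d)})^{-1}\bar g_{x,v}\cdot c\otimes h .
\]

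Finally I pass to $F[\![\uv-v]\!]$ and reduce modulo $\uv-v$. This requires $(\psi_x^{(d)})^{-1}$ to make sense in $F[\![\uv-v]\!]$, i.e.\ that its specialization at $\uv=v$ is defined. I expect this to be the main obstacle. To handle it, I use the norm computation in the proof of Lemma~\ref{lem: psi-x}: after the $d$-fold twist, only factors $\omega_{\theta,v}^{(N)}$ with $N\ge d$ appear, and these are invertible in $F[\![\uv-v]\!]$, just as $\omega^{(d)}_{\theta,v}$ was shown to be. Hence $\psi_x^{(d)}$ is a unit there, and taking the quotient modulo $(t-\theta)$ yields the formula for $\Phi_{M_x,w}$.
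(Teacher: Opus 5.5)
Your proposal is correct and follows essentially the paper's route: the corollary is read off from Proposition~\ref{prop: sp-iso-x}, i.e.\ $\delta_{\Mcal_{x,w}}$ is multiplication by $\psi_x^{-1}$. Combined with the identity $\psi_x^{-1}g_{x,v}=\bar g_{x,v}\,(\psi_x^{(d)})^{-1}$ from that proposition's proof and reduction modulo $t-\theta$ as in Proposition~\ref{prop: c-d-iso}, this gives the formula. Your explicit check, via the twisted norm identity of Lemma~\ref{lem: psi-x}, that $\psi_x^{(d)}$ is a unit in $F[\![\uv-v]\!]\otimes_{\FF_q[t]}\uO_{\unfk}$ is sound; the paper instead takes this from the general construction of $\tilde{\delta}_{\Mcal}$. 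The detour through $c^{q^d}$ is harmless but unnecessary, and the appeal to a perfect closure should be dropped, since $K_{\unfk,w}$ is not perfect: linearity of $\delta_{\Mcal_{x,w}}$ over the big ring already gives the formula for every $c$.
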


Now, identify $K_{\unfk,w}\otimes_{\FF_q} \uO_{\unfk}$ with
\[
\frac{K_{\unfk,w}[t,z]}{(C^*_{\unfk}(t,z))},
\quad \text{ and let }\ 
\bar{z} = z + (C^*_{\unfk}(t,z)) \ \in \frac{K_{\unfk,w}[t,z]}{(C^*_{\unfk}(t,z))}.
\]
Given $\ua \in \uA$ coprime to $\unfk$, define
\begin{equation}\label{eqn: e-ua}
e_{\ua} \assign 
\prod_{\subfrac{\ub \in (\uA/\unfk)^{\times}}{\ub \not\equiv \ua \bmod \unfk}}
\frac{\bar{z}-C_{\ub}(\lambda_{\unfk})}{C_{\ua}(\lambda_{\unfk})-C_{\ub}(\lambda_{\unfk})} \quad \in \frac{K_{\unfk,w}[t,z]}{(C_{\unfk}^*(t,z))} \cong K_{\unfk,w}\otimes_{\FF_q}\uO_{\unfk},
\end{equation}
which satisfies $e_{\ua}^2 = e_{\ua}$ and $\ualpha \cdot e_{\ua} = \xi_{\ua}(\ualpha) \cdot e_{\ua}$ for every $\ualpha \in \uO_{\unfk}$, and let
\begin{align}\label{eqn: omega-a}
\omega_{\ua}^x & \assign 
\left(\prod_{N=1}^{d-1} g_x^{(N)}(\xi_{\ua})\right)^{-1} \cdot g_{x,v} \cdot e_{\ua} \ \bmod (t-\theta) \\ 
& \hspace{2cm}\in  
\frac{g_{x,v}\cdot K_{\unfk}\otimes_{\FF_q}\uO_{\unfk}}{(t-\theta)g_{x,v}\cdot K_{\unfk}\otimes_{\FF_q}\uO_{\unfk}} = H_{\dR}(M_{x},K_{\unfk}). \notag
\end{align}
Then $\{\omega_{\ua}^x\mid \ua \in (\uA/\unfk)^{\times}\}$ forms a basis for $H_{\dR}(M_x,K_{\unfk})$ and
\[
\ualpha_{\dR}(\omega_{\ua}^x) = \xi_{\ua}(\ualpha)\cdot \omega_{\ua}^x, \quad \forall \ualpha \in \uO_{\unfk}.
\]
Here we regard $\ualpha \in \uO_{\unfk}$ as an endomorphism of $M_x$ (by natural multiplication), and $\ualpha_{\dR}$ is the induced map on $H_{\dR}(M_x,K_{\unfk})$.
Moreover, the crystalline action of a Frobenius automorphism $\sigma_v \in \Wcal_{v}$ on this basis can be illustrated as follows:
\begin{prop}\label{prop: v-iso-period-Mx}
Given $\sigma_v \in \Wcal_v$ with $\deg \sigma_v =1$, we have
\[
\sigma_{v,\cris}(\omega_{\ua}^x)
=\left(\prod_{N=1}^{d}g_x^{(N)}(\xi_{\uv \ua})\right) \cdot \frac{\psi_{x}^{(d+1)}(\xi_{\uv \ua})}{\psi_{x}^{(1)}(\xi_{\ua})} \cdot \omega_{\uv \ua}^x, \quad \forall \ua \in (\uA/\unfk)^{\times}.
\]    
\end{prop}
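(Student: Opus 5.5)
The plan is to compute $\sigma_{v,\cris}(\omega^x_{\ua})$ directly from the preceding Corollary, by passing through $H_{\cris}$, applying $\tilde{\sigma}_{v,\cris}$ there, and transporting back. Write $\omega^x_{\ua} = c_{\ua}^{-1}\, g_{x,v} e_{\ua} \bmod (t-\theta)$ with $c_{\ua} := \prod_{N=1}^{d-1} g_x^{(N)}(\xi_{\ua})$. By the Corollary,
\[
\Phi_{M_x,w}(\omega^x_{\ua}) = c_{\ua}^{-1}\,(\psi_x^{(d)})^{-1}\,\bar g_{x,v}\, e_{\ua} \bmod (t-\theta).
\]
Since $e_{\ua}$ is an eigen-idempotent, modulo $(t-\theta)$ any element $f$ of $K_{\unfk,w}\otimes\uO_{\unfk}$ (or of its completion) satisfies $f e_{\ua}\equiv f(\xi_{\ua}) e_{\ua}$. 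This lets us replace $(\psi_x^{(d)})^{-1}$ by the scalar $\psi_x^{(d)}(\xi_{\ua})^{-1}$.

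A point to check first is that $\psi_x^{(d)}(\xi_{\ua})=\psi_x^{(1)}(\xi_{\ua})\cdot$(finite product of $g_x^{(i)}/\bar g_x^{(i)}$ evaluated at $\xi_{\ua}$). This converts to the normalization $\psi^{(1)}$ appearing in the statement, and the extra factors $g_x^{(i)}(\xi_{\ua})$ for $1\le i\le d-1$ cancel against $c_{\ua}$. So I expect
\[
\Phi(\omega^x_{\ua}) = \psi_x^{(1)}(\xi_{\ua})^{-1}\Big(\prod_{i=1}^{d-1}\bar g_x^{(i)}(\xi_{\ua})\Big)^{-1}\bar g_{x,v} e_{\ua}
\]
up to the bookkeeping of the $i=0$ factor.

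Next, apply $\tilde\sigma_{v,\cris}$. It acts $\sigma_v$-linearly on scalars and by $\tau^d$ on the crystalline lattice, with $\tau^d(\bar g_{x,v} h)=\bar g_{x,v}\bar g_{x,v}^{(d)}h^{(d)}$ after identifying $\tau^d\overline{\Mcal}$ with $\bar g_{x,v}\cdot(\cdot)$. The key observation is what happens to the idempotent: $e_{\ua}$ is built from $\bar z$ and the constants $C_{\ub}(\lambda_{\unfk})$. Since $\sigma_v$ restricted to $K_{\unfk}$ is the Frobenius at $w$, namely the Artin symbol $\varsigma_{\uv}$, applying $\sigma_v$ to the coefficients sends $e_{\ua}$ to $e_{\uv\ua}$. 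Thus the result is a scalar times $\bar g_{x,v}e_{\uv\ua}$. The scalars $\bar g^{(i)}_x(\xi_{\ua})\in k_v\cap\bar k$ are fixed by $\sigma_v$ by \eqref{eqn: Gsum-kv}, while $\psi^{(1)}_x(\xi_{\ua})$ is moved to $\psi^{(1)}_x(\xi_{\ua})^{\sigma_v}$. I will need to check that this equals the appropriate twist, which follows from $\sigma_v(C_{\ua}(\lambda))=C_{\uv\ua}(\lambda)$ and $\theta$ being fixed. The new factor $\bar g_{x,v}^{(d)}(\xi_{\uv\ua})=\prod_{i=d}^{2d-1}\bar g_x^{(i)}(\xi_{\uv\ua})$ is rewritten via \eqref{eqn: bar-g-inv} as $\prod_{i=0}^{d-1}\bar g_x^{(i)}(\xi_{\ua})$-type terms, which cancel the earlier denominators.

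Finally, apply $\Phi^{-1}$. We express $\bar g_{x,v}e_{\uv\ua}$ as $\psi_x^{(d)}(\xi_{\uv\ua})\cdot g_{x,v}e_{\uv\ua}=\psi_x^{(d)}(\xi_{\uv\ua})\, c_{\uv\ua}\,\omega^x_{\uv\ua}$. Converting $\psi^{(d)}$ and $c_{\uv\ua}$ into $\psi^{(d+1)}(\xi_{\uv\ua})$ and $\prod_{N=1}^d g_x^{(N)}(\xi_{\uv\ua})$ absorbs one more $g/\bar g$ ratio. Collecting terms should give exactly the stated coefficient.

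The main obstacle is the index bookkeeping: matching the Frobenius twists of $\psi_x$ and the partial products of $g_x^{(i)}$ and $\bar g_x^{(i)}$ at $\xi_{\ua}$ versus $\xi_{\uv\ua}$. A second delicate point is justifying that evaluating $\psi_x$ (a convergent infinite product in the big ring) at $\xi_{\ua}$ commutes with the semilinear $\sigma_v$-action, i.e.\ that $\sigma_v(\psi^{(j)}_x(\xi_{\ua}))=\psi_x^{(j)}(\xi_{\uv\ua})$ up to the $\bar g$ normalization. For this I would argue termwise on the factors $g_x^{(i)}(\xi_{\ua})$, using continuity of $\sigma_v$ and the fact that $\bar g_x^{(i)}(\xi_{\ua})$ is $\sigma_v$-fixed.
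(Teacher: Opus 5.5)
Your route is the same as the paper's. You compute $\Phi_{M_x,w}(\omega^x_{\ua})$ from the Corollary, then apply $\tilde\sigma_{v,\cris}$: it sends $e_{\ua}$ to $e_{\uv\ua}$ because $\sigma_v|_{K_{\unfk}}=\varsigma_{\uv}$, and it produces the factor $\bar g^{(d)}_{x,v}(\xi_{\uv\ua})$. Finally you pull back by $\Phi_{M_x,w}^{-1}$. Your expression for $\Phi_{M_x,w}(\omega^x_{\ua})$ in terms of $\psi^{(1)}_x(\xi_{\ua})$ is also correct.

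The genuine gap is how you treat the scalar $\psi^{(1)}_x(\xi_{\ua})$ (equivalently $\psi^{(d)}_x(\xi_{\ua})$) under $\sigma_v$. You plan to show $\sigma_v(\psi^{(j)}_x(\xi_{\ua}))=\psi^{(j)}_x(\xi_{\uv\ua})$ ``up to the $\bar g$ normalization'', arguing termwise that $\sigma_v(g^{(i)}_x(\xi_{\ua}))=g^{(i)}_x(\xi_{\uv\ua})$. This is false in general. The automorphism $\sigma_v$ acts on the coefficients of $g_x$ (which lie in $O_{\unfk}[\nfk^{-1}]$) as well as on the evaluation point. Writing $g_x=\sum_j c_j\otimes h_j$, you get $\sigma_v(g^{(i)}_x(\xi_{\ua}))=\sum_j\sigma_v(c_j)^{q^i}\,\xi_{\uv\ua}(h_j)$, not $g^{(i)}_x(\xi_{\uv\ua})$. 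Concretely, Theorem~\ref{thm: Coleman-fun}~(1) gives $g^{(i)}_x(\xi_{\ua})=\prod_{\afk\in A_{+,i-1}}(1+\lbangle ax\rbangle/\afk)\in k^{\times}$ for $i\geq 1$, so this value is fixed by $\sigma_v$. By contrast, $g^{(i)}_x(\xi_{\uv\ua})=\prod_{\afk\in A_{+,i-1}}(1+\lbangle vax\rbangle/\afk)$ already differs from it at $i=1$ unless $\lbangle vax\rbangle=\lbangle ax\rbangle$. If you carried your identification through, the denominator would contain $\psi^{(1)}_x$ evaluated at $\xi_{\uv\ua}$ (times uncontrolled Gauss-sum ratios) instead of $\psi^{(1)}_x(\xi_{\ua})$. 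You would not reach the stated formula.

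The missing observation is exactly what the paper uses. For all $N\geq 0$, $g^{(N+1)}_x(\xi_{\ua})\in k^{\times}$ by Theorem~\ref{thm: Coleman-fun}~(1), and $\bar g^{(N+1)}_x(\xi_{\ua})\in k_v^{\times}$ by \eqref{eqn: Gsum-kv}. Hence $\psi^{(1)}_x(\xi_{\ua})$, $\psi^{(d)}_x(\xi_{\ua})$ and $c_{\ua}$ all lie in $k_v^{\times}$ and are simply fixed by $\sigma_v$; the only thing $\sigma_v$ moves is the idempotent.

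There is also a bookkeeping point. \eqref{eqn: bar-g-inv} only covers twists $\geq d+1$ at $\xi_{\uv\ua}$, so $\bar g^{(d)}_{x,v}(\xi_{\uv\ua})=\bar g^{(d)}_x(\xi_{\uv\ua})\prod_{N=1}^{d-1}\bar g^{(N)}_x(\xi_{\ua})$. Do not convert the factor $\bar g^{(d)}_x(\xi_{\uv\ua})$ into $\bar g_x(\xi_{\ua})$, as your ``$\prod_{i=0}^{d-1}$'' suggests. You need it to cancel the $\bar g^{(d)}_x(\xi_{\uv\ua})$ that appears in the last step, when you rewrite $\psi^{(d)}_x(\xi_{\uv\ua})$ as $\bigl(g^{(d)}_x(\xi_{\uv\ua})/\bar g^{(d)}_x(\xi_{\uv\ua})\bigr)\,\psi^{(d+1)}_x(\xi_{\uv\ua})$. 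With these two corrections, your computation yields exactly the stated coefficient.
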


\begin{proof}
Let 
\[
\bar{\omega}_{\ua}^x \assign 
\left(\prod_{N=1}^{d-1} g_x^{(N)}(\xi_{\ua})\right)^{-1} \cdot \bar{g}_{x,v} \cdot e_{\ua} \ \bmod (t-\theta) \ \in  
\frac{\bar{g}_{x,v}\cdot K_{\unfk}\otimes_{\FF_q}\uO_{\unfk}}{(t-\theta)\bar{g}_{x,v}\cdot K_{\unfk}\otimes_{\FF_q}\uO_{\unfk}} = H_{\cris}(M_x,K_{\unfk}).
\]
Then 
\begin{equation}\label{eqn: Phi-M-x}
\Phi_{M_x,w}(\omega_{\ua}^x) = \psi_{x}^{(d)}(\xi_{\ua})^{-1} \cdot \bar{\omega}_{\ua}^x.
\end{equation}
Moreover, from the definition of the $\sigma_v$-linear map $\tilde{\sigma}_{v,\cris}$ on $H_{\cris}(M_x,\bar{k}_v)$ in \eqref{eqn: t-sigma-cris}, we get
\begin{equation}\label{eqn: v-cris-M-x}
\tilde{\sigma}_{v,\cris}(\bar{\omega}_{\ua}^x) = 
\left(\frac{\prod_{N=1}^{d-1} g_x^{(N)}(\xi_{\uv\ua})}{\prod_{N=1}^{d-1} g_x^{(N)}(\xi_{\ua})}\right) \cdot
\bar{g}_{x,v}^{(d)}(\xi_{\uv\ua}) \cdot \bar{\omega}_{\uv \ua}^x.
\end{equation}
Note that from Theorem~\ref{thm: Coleman-fun}~(1) and \eqref{eqn: Gsum-kv} one has that 
\[
g_x^{(N+1)}(\xi_{\ua})\ \text{ and } \ \bar{g}_x^{(N+1)}(\xi_{\ua})
\ \text{ are in } k_v^{\times} \ \text{ for every $N \in \ZZ_{\geq 0}$},
\]
which implies that
$\psi_x^{(d)}(\xi_{\ua}) \in k_v^{\times}$.
As $\bar{g}^{(d)}_{x,v}(\xi_{\uv\ua}) = \bar{g}^{(d)}_x(\xi_{\uv\ua})\cdot  \prod_{N=1}^{d-1}\bar{g}^{(N)}_{x}(\xi_{\ua})$ by \eqref{eqn: bar-g-inv}, we obtain that
\begin{align*}
\sigma_{v,\cris}(\omega_{\ua}^x)
& = \Phi_{M_x,w}^{-1}\circ \tilde{\sigma}_{v,\cris} \circ \Phi_{M_x,w}(\omega_{\ua}^x) \\
& = \Phi_{M_x,w}^{-1}\Big(\tilde{\sigma}_{v,\cris}\big(\psi^{(d)}_x(\xi_{\ua})^{-1} \cdot \bar{\omega}_{\ua}^x\big)\Big) \\
& = \psi^{(d)}_x(\xi_{\ua})^{-1} \cdot 
\Phi_{M_x,w}^{-1}\left(
\left(\frac{\prod_{N=1}^{d-1} g_x^{(N)}(\xi_{\uv\ua})}{\prod_{N=1}^{d-1} g_x^{(N)}(\xi_{\ua})}\right) \cdot
\bar{g}^{(d)}_{x,v}(\xi_{\uv\ua}) \cdot \bar{\omega}_{\uv \ua}^x
\right) \\
& = \frac{\psi_x^{(d)}(\xi_{\uv\ua})}{\psi_{x}^{(d)}(\xi_{\ua})} \cdot 
\left(\frac{\prod_{N=1}^{d-1} g_x^{(N)}(\xi_{\uv\ua})}{\prod_{N=1}^{d-1} g_x^{(N)}(\xi_{\ua})}\right) \cdot
\left(\bar{g}_{x}^{(d)}(\xi_{\uv\ua})\cdot \prod_{N=1}^{d-1}\bar{g}_x^{(N)}(\xi_{\ua})\right) \cdot \omega_{\uv\ua}^x\\
& =
\left(\prod_{N=1}^{d}g_x^{(N)}(\xi_{\uv \ua})\right) \cdot \frac{\psi_{x}^{(d+1)}(\xi_{\uv \ua})}{\psi_{x}^{(1)}(\xi_{\ua})} \cdot \omega_{\uv \ua}^x
\end{align*}
as desired.
\end{proof}

\subsection{General soliton \texorpdfstring{$t$}{t}-motives}
\label{sec: M-boxx}
Let $\Ascr^{\geo}_{\unfk}$ be the free abelian group generated by $x \in \nfk^{-1} A$.
Given $\boxx = \sum_{x} n_x[x] \in \Ascr^{\geo}_{\unfk}$, the weight of $\boxx$ is given by (see \cite[6.1.1]{ABP04})
\[
\wt(\boxx)\assign
\frac{1}{q-1}\sum_{x \in \nfk^{-1}A\setminus A} n_x \quad \in \frac{1}{q-1}\ZZ.
\]
We say that 
$\boxx$ is \emph{effective} if $n_x \geq 0$.
Suppose $\boxx$ is effective with $\wt(\boxx)>0$,
define
\[
W_{\boxx}\assign \sum_{x \in \nfk^{-1}A\setminus A} n_x W_x, \quad g_{\boxx}\assign \prod_{x \in \nfk^{-1}A\setminus A}g_x^{n_x},
\quad \text{ and } \quad 
\Mcal_{\boxx}\assign \Ocal_{X_{\unfk}\times \uX_{\unfk}}(W_{\boxx})(V_{\unfk}\times \uU_{\unfk}),
\]
a locally free of rank one module over $O_{\unfk}[\nfk^{-1}]\otimes_{\FF_q}\uO_{\unfk}$, which is equipped with a $\tau$-action defined by
\[
\tau \cdot h \assign g_{\boxx} \cdot h^{(1)}, \quad \forall h \in \Mcal_{\boxx}.
\]
The \emph{soliton $t$-motive associated with $\boxx$ over $K_{\unfk}$} is $M_{\boxx}\assign K_{\unfk}\otimes_{O_{\unfk}[\nfk^{-1}]} \Mcal_{\boxx}$.

\begin{rem}
Write $\boxx = \sum_{i=1}^d n_i [x_i] + \sum_{i'=1}^{d'} n_i'[a_i]$ with $x_i \notin A$ and $a_i \in A$. 
When $\boxx$ is effective with $\wt(\boxx)>0$,
observe that $\Mcal_{\boxx}$ is isomorphic to
\[
\Mcal_{x_1}^{\otimes_{\unfk} n_1} \otimes_{\unfk} \cdots \otimes_{\unfk} \Mcal_{x_d}^{\otimes_{\unfk} n_d}, 
\quad \text{ where } \quad 
\Mcal_{x}\otimes_{\unfk} \Mcal_{x'} \assign M_{x'}\otimes_{O_{\unfk}[\nfk^{-1}]\otimes_{\FF_q}\uO_{\unfk}} \Mcal_{x'}
\]
with the $\tau$-action given by
\[
\tau \cdot (m\otimes m') \assign (\tau m\otimes \tau m'), \quad \forall m \in M_{x} \text{ and } m' \in M_{x'}.
\]
\end{rem}
Define
\[
\psi_{\boxx} \assign \prod_{i=1}^d\psi_{x_i}^{n_i} \quad \in \
O_{\unfk,w}[\![\uv, \uv^{-1}\}
\otimes_{\FF_q[t]}\uO_{\unfk} \cap 
\Big(O_{\unfk,w}[\![\uv,\uv^{-1}\}[\omega_v(\uv)^{-1}]\otimes_{\FF_q[t]}\uO_{\unfk}\Big)^{\times}
\]
Following the same approach (or, alternatively, regarding $\Mcal_{\boxx}$ as the tensor product in the above),
we can find a $K_{\unfk}$-basis $\{ \omega_{\ua}^{\boxx}\mid \ua \in (\uA/\unfk)^{\times}\}$ for $H_{\dR}(M_{\boxx},K_{\unfk})$
with
\[
\ualpha_{\dR}(\omega_{\ua}^{\boxx}) = \xi_{\ua}(\ualpha)\cdot \omega_{\ua}^{\boxx}, \quad \forall \ualpha \in \uO_{\unfk},
\]
and the $v$-isocrystal action is realized as follows:

\begin{prop}\label{prop: v-iso-period-Mboxx}
Given an effective $\boxx \in \Ascr_{\unfk}^{\geo}$ with $\wt(\boxx)>0$,
\[
\sigma_{v,\cris} (\omega_{\ua}^{\boxx})
=\left(\prod_{N=1}^{d}g_{\boxx}^{(N)}(\xi_{\uv \ua})\right) \cdot \frac{\psi_{\boxx}^{(d+1)}(\xi_{\uv \ua})}{\psi_{\boxx}^{(1)}(\xi_{\ua})} \cdot \omega_{\uv \ua}^{\boxx}, \quad \forall \ua \in (\uA/\unfk)^{\times}.
\]    
\end{prop}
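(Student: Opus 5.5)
The proof will repeat, verbatim, the argument for Proposition~\ref{prop: v-iso-period-Mx}, with $g_x$, $\psi_x$ and $\Mcal_{x,w}$ replaced by $g_{\boxx}$, $\psi_{\boxx}$ and $\Mcal_{\boxx,w}\assign O_{\unfk,w}\otimes_{O_{\unfk}[\nfk^{-1}]}\Mcal_{\boxx}$.

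\textbf{Step 1: good model and $\delta$.} Since $\bar g_{\boxx}=\prod_x \bar g_x^{n_x}\neq 0$, the module $\Mcal_{\boxx,w}$ is a good model of $M_{\boxx}$, exactly as in Remark~\ref{rem: prop-Mx}~(3). The divisor $W_{\boxx}$ is effective and is cut out by products of the factors $(t-\theta^{q^i})$, which are units by \eqref{eqn: t-theta-inv}. Hence the analogues of \eqref{eqn: On-Mw-1} and \eqref{eqn: On-Mw-2} hold for $\Mcal_{\boxx,w}$. Lemma~\ref{lem: psi-x} shows each $\psi_{x_i}$ is a unit, so $\psi_{\boxx}$ is a unit too. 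The same computation as in Proposition~\ref{prop: sp-iso-x} then applies: by multiplicativity $g_{\boxx}^{(i)}/\bar g_{\boxx}^{(i)}$ telescopes inside $\psi_{\boxx}$, so $h\mapsto\psi_{\boxx}^{-1}h$ commutes with $\tau^d$ and is congruent to the identity modulo $\pfk_w$. By the uniqueness in Lemma~\ref{lem: key-iso},
\[
\delta_{\Mcal_{\boxx,w}}(h)=\psi_{\boxx}^{-1}h \quad\text{for } h\in\uO_{\unfk}.
\]

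\textbf{Step 2: the basis and $\Phi$.} Set $g_{\boxx,v}\assign\prod_{i=0}^{d-1}g_{\boxx}^{(i)}$ and
\[
\omega^{\boxx}_{\ua}\assign\Big(\prod_{N=1}^{d-1}g_{\boxx}^{(N)}(\xi_{\ua})\Big)^{-1}g_{\boxx,v}\,e_{\ua}\bmod(t-\theta),
\]
with the crystalline counterpart $\bar\omega^{\boxx}_{\ua}$ defined the same way using $\bar g_{\boxx,v}$. The argument of the Corollary preceding Proposition~\ref{prop: v-iso-period-Mx} then gives
\[
\Phi_{M_{\boxx},w}(\omega^{\boxx}_{\ua})=\psi_{\boxx}^{(d)}(\xi_{\ua})^{-1}\,\bar\omega^{\boxx}_{\ua}.
\]
Evaluating the twisted $\tau^d$-action through the idempotents $e_{\ua}$ gives, as in \eqref{eqn: v-cris-M-x},
\[
\tilde\sigma_{v,\cris}(\bar\omega^{\boxx}_{\ua})=\frac{\prod_{N=1}^{d-1}g^{(N)}_{\boxx}(\xi_{\uv\ua})}{\prod_{N=1}^{d-1}g^{(N)}_{\boxx}(\xi_{\ua})}\;\bar g_{\boxx,v}^{(d)}(\xi_{\uv\ua})\;\bar\omega^{\boxx}_{\uv\ua}.
\]

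\textbf{Step 3: rationality and invariance (the only non-formal point).} To pull the scalar $\psi^{(d)}_{\boxx}(\xi_{\ua})^{-1}$ through the $\sigma_v$-semilinear map, we need $\psi^{(d)}_{\boxx}(\xi_{\ua})\in k_v^\times$, so that $\sigma_v$ fixes it. We also need the identity
\[
\bar g^{(d)}_{\boxx,v}(\xi_{\uv\ua})=\bar g^{(d)}_{\boxx}(\xi_{\uv\ua})\prod_{N=1}^{d-1}\bar g^{(N)}_{\boxx}(\xi_{\ua}).
\]
Both reduce to the single-$x$ statements, Theorem~\ref{thm: Coleman-fun}~(1) together with \eqref{eqn: Gsum-kv} and \eqref{eqn: bar-g-inv}. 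This works because every quantity involved is a finite product over $x$ of the corresponding quantity for $g_x$, raised to the power $n_x$.

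\textbf{Step 4: conclusion.} Substituting into $\Phi^{-1}\circ\tilde\sigma_{v,\cris}\circ\Phi$ and collecting factors, exactly as in the last display of the proof of Proposition~\ref{prop: v-iso-period-Mx}, we get
\[
\frac{\psi^{(d)}_{\boxx}(\xi_{\uv\ua})}{\psi^{(d)}_{\boxx}(\xi_{\ua})}\cdot\frac{\bar g^{(d)}_{\boxx}(\xi_{\uv\ua})}{\bar g^{(1)}_{\boxx}(\xi_{\ua})^{-1}}\cdots
\]
and the factors recombine to $\prod_{N=1}^{d}g^{(N)}_{\boxx}(\xi_{\uv\ua})\cdot\psi^{(d+1)}_{\boxx}(\xi_{\uv\ua})/\psi^{(1)}_{\boxx}(\xi_{\ua})$. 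The recombination uses $\psi^{(d)}=\frac{g^{(d)}}{\bar g^{(d)}}\psi^{(d+1)}$ at $\xi_{\uv\ua}$ and the analogous expansion of $\psi^{(1)}$ at $\xi_{\ua}$ into its first $d-1$ factors.

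Alternatively, for $\boxx=\sum n_i[x_i]$ one can observe that $M_{\boxx}$ is the tensor product of the $M_{x_i}^{\otimes n_i}$ over $\uO_{\unfk}$ and invoke the tensor compatibility of $\sigma_{\cris}$. The eigenline $\omega^{\boxx}_{\ua}$ then corresponds to $\bigotimes_i(\omega^{x_i}_{\ua})^{\otimes n_i}$, and the formula is the product of the formulas in Proposition~\ref{prop: v-iso-period-Mx}. The one check needed is that the normalizing scalars are multiplicative, which is immediate from the definitions.
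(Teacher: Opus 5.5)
Your proposal is correct and follows the paper's route. The paper gives no separate proof: it says only to repeat the argument of Proposition~\ref{prop: v-iso-period-Mx} with $g_{\boxx},\psi_{\boxx}$ in place of $g_x,\psi_x$, or alternatively to use the tensor-product description of $\Mcal_{\boxx}$. Your Steps 1--4, with multiplicativity over $x$ reducing the $k_v$-rationality and \eqref{eqn: bar-g-inv} to the single-$x$ case, carry this out. One small slip: the partial display in Step 4 should have the factor $\bar g^{(d)}_{\boxx}(\xi_{\uv\ua})\prod_{N=1}^{d-1}\bar g^{(N)}_{\boxx}(\xi_{\ua})$, although your verbal description of the recombination is right.
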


\begin{rem}\label{rem: period-omega-a}
From Theorem~\ref{thm: Coleman-fun} (1) and equation \eqref{eqn: bar-g-inv}
we know that 
\begin{align*}
\left(\prod_{N=1}^{d}g_x^{(N)}(\xi_{\uv \ua})\right) \cdot \frac{\psi_{x}^{(d+1)}(\xi_{\uv \ua})}{\psi_{x}^{(1)}(\xi_{\ua})}
&= \left(\prod_{N=0}^{d-1}g_x^{(N+1)}(\xi_{\uv \ua})\right) \cdot
\prod_{N=d}^{\infty} \left(\frac{g_{x}^{(N+1)}(\xi_{\uv\ua})\big/\bar{g}_{x}^{(N+1)}(\xi_{\uv\ua}) }{g_{x}^{(N+1-d)}(\xi_{\ua})\big/\bar{g}_{x}^{(N+1-d)}(\xi_{\ua})}\right) \\ 
&=
\prod_{N=0}^{d-1} \prod_{\afk \in A_{+,N}}\left(1+\frac{\lbangle vax\rbangle}{\afk}\right) \cdot \prod_{N=d}^{\infty}
\frac{\displaystyle
\prod_{\afk \in \uA_{+,N}}\left(1+\frac{\lbangle vax\rbangle }{\afk}\right)
}{\displaystyle
\prod_{\afk \in \uA_{+,N-d}}\left(1+\frac{\lbangle ax\rbangle}{\afk}\right)
}.
\end{align*}
We shall connect this infinite product with the special value of $v$-adic geometric gamma function in the next section.
\end{rem}

\section{\texorpdfstring{$v$}{v}-adic gamma values and their period interpretations}
\label{Sec: v-gamma-period}

\subsection{ \texorpdfstring{$v$}{v}-adic geometric gamma function}\label{sec: v-gamma}

Given $a \in A_v$, put
\[
a_{\flat}\assign
\begin{cases}
    a, & \text{ if $\ord_v(a)=0$,} \\
    1, & \text{ otherwise.}
\end{cases}
\]
The $v$-adic geometric gamma function (introduced by Thakur \cite[Section 5.9]{Tha91}) is defined by
\[
\Gamma_v^{\geo}(x)\assign x_{\flat}^{-1} \cdot \prod_{\afk \in A_+}\frac{\afk_{\flat}}{(x+\afk)_{\flat}} \ \in k_v^{\times}, \quad \forall x \in A_v.
\]
Similar to the $\infty$-adic case, it has the following three standard functional equations:
\begin{prop}\label{prop: FE}
\begin{itemize}
    \item[(1)] (Translation.) For every $a \in A$,
    \[
    \frac{\Gamma_v^{\geo}(x+a)}{\Gamma_v^{\geo}(x)}
    = \frac{x_{\flat}}{(x+a)_{\flat}} \cdot \prod_{N=0}^{\deg a}
    \prod_{\afk \in A_{+,N}} \frac{(x+\afk)_{\flat}}{(x+a+\afk)_{\flat}}.
    \]
    \item[(2)] (Reflection, see \cite[Section 6.4]{Tha91}.)
    \[
    \prod_{\epsilon \in \FF_q^{\times}}\Gamma_v^{\geo}(\epsilon x) = \zeta \cdot x_{\flat}^{2-q}, \quad \text{ where } \quad \zeta \in \FF_q^{\times}.
    \]
    \item[(3)]
    (Multiplication, see \cite[Section 6.3]{Tha91}.) Given $\mfk \in A_+$ with $v \nmid \mfk$,
    \[
    \prod_{\subfrac{a \in A}{\deg a <\deg \mfk}} \Gamma_v^{\geo}(x+\frac{a}{\mfk}) = \Gamma_v^{\geo}(\mfk x) \cdot Q(x,\mfk) \quad \text{ where } \quad Q(x,\mfk) \in k(x)^{\times}.
    \]
\end{itemize}
\end{prop}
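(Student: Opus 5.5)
The three functional equations are proved directly from the product formula, working with partial products. For $N\ge 0$ set
\[
\Gamma_{v,N}(x)\assign x_\flat^{-1}\prod_{i=0}^{N}\prod_{\afk\in A_{+,i}}\frac{\afk_\flat}{(x+\afk)_\flat}.
\]
Then $\Gamma_v^{\geo}(x)=\lim_N\Gamma_{v,N}(x)$ in $k_v$. The basic input is a Morita-type fact: for $N\ge \deg v$, the finite product $\prod_{\afk\in A_{+,N}}\afk_\flat/(x+\afk)_\flat$ is $\equiv 1 \bmod v^{c(N)}$ with $c(N)\to\infty$. This holds because $\afk\mapsto x+\afk$ permutes the units of $A_{+,N}$ modulo high powers of $v$, via the standard count of residues in $A_{+,N}$ modulo $v^m$. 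I take this convergence, which is implicit in the definition, as given.

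\emph{Translation.} Fix $a\in A$ with $\deg a=n$. For every $N>n$, the map $\afk\mapsto \afk+a$ is a bijection of $A_{+,N}$. Hence the factors $(x+a+\afk)_\flat$ with $\deg\afk=N>n$ are exactly the factors $(x+\afk')_\flat$ with $\afk'\in A_{+,N}$. Taking the ratio of $\Gamma_{v,N}(x+a)$ and $\Gamma_{v,N}(x)$, all blocks of degree $>\deg a$ therefore cancel. What remains is $x_\flat/(x+a)_\flat$ times the blocks of degree $0,\dots,\deg a$, and this is exactly the stated formula. Letting $N\to\infty$ finishes (1). This step is purely combinatorial.

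\emph{Reflection.} Form $\prod_{\epsilon\in\FF_q^\times}\Gamma_{v,N}(\epsilon x)$. Since $(\epsilon y)_\flat=\epsilon y_\flat$, we have $(\epsilon x+\afk)_\flat=\epsilon(x+\epsilon^{-1}\afk)_\flat$. As $\epsilon$ ranges over $\FF_q^\times$ and $\afk$ over $A_{+,i}$, the element $\epsilon^{-1}\afk$ ranges over all $b\in A$ of degree exactly $i$. Combining the blocks of degrees $0,\dots,N$, the denominator becomes, up to a constant in $\FF_q^\times$,
\[
\prod_{0\ne b\in A,\ \deg b\le N}(x+b)_\flat .
\]
The numerator becomes $\prod_{0\neq b,\ \deg b\le N} b_\flat$, again up to a constant. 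Thus, up to an $\FF_q^\times$ constant, the partial product equals
\[
x_\flat^{-(q-1)}\prod_{0\ne b,\ \deg b\le N}\frac{b_\flat}{(x+b)_\flat}.
\]
Now use the translation-invariance of the set $\{\deg b\le N\}$ together with the fact that $\prod_{\deg b\le N}(x+b)_\flat$, taken over all $b$ including $b=0$, is $v$-adically close to $\prod_{\deg b\le N} b_\flat$ up to a sign. This is the $v$-adic Wilson-type congruence for the additive group $\{\deg b\le N\}$, obtained from the same residue-counting input. It identifies the partial product with $\zeta\, x_\flat^{2-q}$ in the limit.

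The main obstacle is controlling the constants $\zeta$ and the $x_\flat$ exponent when $v\mid x+b$ for some $b$, since then the $\flat$ removes a factor. I would handle this by noting that $\flat$ only discards non-units, and that the exceptional $b$ are equidistributed among the residues.

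\emph{Multiplication.} Since $v\nmid\mfk$, multiplication by $\mfk$ is a unit operation. One writes $(x+a/\mfk+\afk)_\flat=\mfk_\flat^{-1}(\mfk x+a+\mfk\afk)_\flat$. As $a$ ranges over degrees $<\deg\mfk$ and $\afk$ over $A_{+,i}$, the element $a+\mfk\afk$ ranges over the monic polynomials of degree $i+\deg\mfk$. Hence the product over $a$ of the partial products rearranges into the partial product for $\Gamma_v^{\geo}(\mfk x)$, up to two kinds of correction. The first is a finite boundary contribution from low-degree terms, which is a rational function of $x$. The second is powers of $\mfk$ and of the $\afk_\flat$ ratios. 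These latter converge, by the same Morita-type lemma, to an element that I claim is also rational in $x$; this is the one point where I would follow \cite[Section 6.3]{Tha91}. Collecting the finite boundary terms then gives $Q(x,\mfk)\in k(x)^\times$.
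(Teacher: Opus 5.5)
The paper gives no argument for this proposition: (1) is treated as the evident block cancellation, and (2), (3) are quoted from Thakur.

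\textbf{Part (1).} Your proof is exactly that cancellation and is fine.

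\textbf{Part (2).} Your argument is a legitimate self-contained route, after two small repairs.
\begin{itemize}
\item The identity $(\epsilon y)_\flat=\epsilon y_\flat$ fails when $v\mid y$: the left side is $1$ and the right side is $\epsilon$. So block $i$ acquires a factor $\prod_\epsilon \epsilon^{e_i(\epsilon)}$ with $e_i(\epsilon)=\#\{\afk\in A_{+,i}: v\nmid \epsilon^{-1}\afk\}-\#\{\afk\in A_{+,i}: v\nmid x+\epsilon^{-1}\afk\}$. For $i\ge d=\deg v$, the set $\epsilon^{-1}\theta^i+\{\deg<i\}$ and its translate by $x$ are equidistributed mod $v$. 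Hence $e_i(\epsilon)=0$, and only finitely many blocks feed into $\zeta$. This is what makes your equidistribution remark rigorous.
\item No Wilson congruence or sign is involved. Once $md\le N+1$, the sets $x+\{\deg b\le N\}$ and $\{\deg b\le N\}$ have the same multiset of residues mod $v^m$. So the ratio of the products of their unit parts is $\equiv 1 \bmod v^m$.
\end{itemize}

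\textbf{Part (3).} Here there is a genuine gap, and it sits at the step that carries the content.

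First, the identity $(x+a/\mfk+\afk)_\flat=\mfk^{-1}(\mfk x+a+\mfk\afk)_\flat$ is false when $v\mid \mfk x+a+\mfk\afk$. The correct rule is $(y/\mfk)_\flat=\mfk^{-\delta(y)}y_\flat$, where $\delta(y)=1$ if $v\nmid y$ and $\delta(y)=0$ otherwise. This is not cosmetic. With your version, the degree-$i$ block carries $\mfk^{q^{i+D}}$ (with $D=\deg\mfk$), which does not tend to $1$ unless $\mfk\equiv 1 \bmod v$; the rearranged product would then diverge. With the correct count, for $i+D\ge d$ the block carries $\mfk^{(q^d-1)q^{i+D-d}}$. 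This tends to $1$ because $\mfk^{q^d-1}\equiv 1 \bmod v$, an input beyond your Morita-type lemma.

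Second, you never prove that the limit lies in $k(x)^\times$; you defer exactly this point. You can close it with what you already have.
\begin{itemize}
\item Put $P_i=\prod_{\afk\in A_{+,i}}\afk_\flat$. The tail $R_\infty=\prod_{i+D\ge d}P_i^{q^D}\mfk^{(q^d-1)q^{i+D-d}}/P_{i+D}$ converges. Indeed, for $i\ge md$ one has $P_i\equiv W_m^{q^{i-md}} \bmod v^m$, where $W_m$ is the product of the units of $A/v^m$, so $P_i^{q^D}\equiv P_{i+D}$.
\item $R_\infty$ does not depend on $x$. Hence $Q(x,\mfk)=F(x)\,R_\infty$, with $F(x)\in k(x)^\times$ the finite boundary factor.
\item At $x=0$ one has $\Gamma_v^{\geo}(0)=1$. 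Group the nonzero $a$ into $\FF_q^\times$-orbits and apply (2) at $a/\mfk$; this shows $\prod_{\deg a<D}\Gamma_v^{\geo}(a/\mfk)\in k^\times$.
\item Therefore $R_\infty\in k^\times$ and $Q(x,\mfk)\in k(x)^\times$, with no appeal to Thakur needed.
\end{itemize}
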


Given $x \in A_v$, let $\partial_v x \in A_v$ and $x_{\circ} \in A$ with $\deg x_{\circ}<d$ 
be the unique elements satisfying
\[
x = x_{\circ}+ v \cdot \partial_v x \quad \in A_v \quad \text{(i.e.~$x\equiv x_{\circ} \bmod v$ \ and \  $\partial_v x = \frac{x-x_{\circ}}{v}$).}
\]
When $x \in k \cap A_v$, we normalize the corresponding geometric special gamma value by
\begin{equation}\label{eqn: normal-v-gamma}
\Gamma_v^{\geo,*}\lbangle x\rbangle \assign (v \cdot 
\partial_v\lbangle x\rbangle )^{-\langle -\lbangle x\rbangle_{\circ}/v\rangle_{\geo}}\cdot 
\prod_{\afk \in A_+}\frac{\afk_{\flat}}{(\afk+\lbangle x\rbangle )_{\flat}}.
\end{equation}
Here we put $0^0 \assign 1 \in k$.
Then the corresponding reflection formula becomes:
\begin{align}\label{eqn: new-reflection}
\prod_{\varepsilon \in \FF_q^{\times}}\Gamma_v^{\geo,*}\lbangle\epsilon x\rbangle & = \zeta \cdot \begin{cases}
    \displaystyle v^{-1} \cdot \frac{\lbangle x \rbangle}{\partial_v \lbangle x \rbangle}, & \text{ if $v \nmid x$;}\\
    1 & \text{ otherwise,}
\end{cases}
\quad \text{ where } \zeta \in \FF_q^{\times}.
\end{align}
Moreover, we have the following Gross--Koblitz--Thakur formula established in \cite[Theorem 5.1.3]{Ch25}:

\begin{thm}\label{thm: G-K-T-formula-geo}
Suppose $x \in \nfk^{-1}A\setminus A$ with $v\nmid \nfk$.
Let $\ell$ be the order of $(v \bmod \nfk) \in (A/\nfk)^{\times}$.
Then
\[
\prod_{i=0}^{\ell-1}\Gamma_v^{\geo,*}\lbangle v^i x\rbangle^{-1} =  \prod_{s=0}^{d\ell-1}\Gcal_x^{\geo,(s)}
\rassign G_{\ell}^{\geo}(x) \quad \in \bar{k}^{\times}.
\]
In particular, we have $G^{\geo}_{\ell}(vx) = G^{\geo}_{\ell}(x)$.
\end{thm}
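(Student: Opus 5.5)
The first identity is the Gross--Koblitz--Thakur formula of \cite[Theorem~5.1.3]{Ch25}, so I will not re-derive it. The work here is to check that Chang's normalization agrees with the normalized value $\Gamma_v^{\geo,*}$ of \eqref{eqn: normal-v-gamma}, and that his Gauss sums are our $\Gcal_x^{\geo,(s)}$ from \eqref{eqn: Geo-GS}. The latter is the identification recorded in Remark~\ref{rem: good-red} via \cite[Remark~3.3.3]{Ch25}.

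For the normalization, I would compare the two definitions term by term. The infinite product $\prod_{\afk\in A_+}\afk_\flat/(\afk+\lbangle x\rbangle)_\flat$ is common to both. The only possible discrepancy is the correction factor $(v\,\partial_v\lbangle x\rbangle)^{-\langle -\lbangle x\rbangle_\circ/v\rangle_{\geo}}$. It removes the single monic $\afk$ (when one exists) with $\afk+\lbangle x\rbangle\equiv 0 \bmod v$ and $\deg\afk<d$, whose contribution would otherwise be dropped by the $\flat$ operation. If Chang's normalization differs, it differs by an element of $k^\times$ of this shape. That element would then be absorbed by comparing with the reflection formula \eqref{eqn: new-reflection}, together with the fact that both sides are products over a full $v$-orbit. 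I expect this bookkeeping to be the only delicate point. I would check it first in the simplest case $d=1$, $\ell=1$.

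The ``in particular'' statement then follows by re-indexing, and I would prove it directly from the first identity. Since $v\nmid\nfk$ and $\ell$ is the order of $v \bmod \nfk$, we have $v^\ell\equiv 1\bmod\nfk$. As $x\in\nfk^{-1}A$, this gives $v^\ell x-x\in A$, hence $\lbangle v^\ell x\rbangle=\lbangle x\rbangle$. Moreover $vx\in\nfk^{-1}A\setminus A$, because $v$ is invertible modulo $\nfk$, so the theorem applies to $vx$ with the same $\ell$. Therefore
\[
G^{\geo}_\ell(vx)=\prod_{i=0}^{\ell-1}\Gamma_v^{\geo,*}\lbangle v^{i+1}x\rbangle^{-1}
=\prod_{i=1}^{\ell}\Gamma_v^{\geo,*}\lbangle v^{i}x\rbangle^{-1}
=G^{\geo}_\ell(x),
\]
where the last step replaces the $i=\ell$ factor by the $i=0$ factor, using that $\Gamma_v^{\geo,*}\lbangle\cdot\rbangle$ depends only on the fractional part.

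As an independent check on the Gauss-sum side, I would also verify the invariance from $\Gcal^{\geo,(s+d)}_{vx}=\Gcal^{\geo,(s)}_x$ (cf.~\eqref{eqn: bar-g-inv}) and the $d\ell$-periodicity of $s\mapsto \Gcal_x^{\geo,(s)}$. The periodicity holds because $\eT(u)^{q^{s}}$ depends only on $s \bmod d\ell$, since $\#\FF_w=q^{d\ell}$. Shifting the range $0\le s<d\ell$ by $d$ then gives the same product.
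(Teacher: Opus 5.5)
Your proposal agrees with the paper. The paper gives no independent argument: it quotes the identity directly from \cite[Theorem~5.1.3]{Ch25}, taken as already stated for $\Gamma_v^{\geo,*}$ and the sums $\Gcal_x^{\geo,(s)}$, and your re-indexing via $\lbangle v^{\ell}x\rbangle=\lbangle x\rbangle$ is exactly the intended proof of the ``in particular'' clause.

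Drop the hedge that a normalization mismatch could be absorbed through \eqref{eqn: new-reflection}. That formula relates products over $\FF_q^{\times}$-multiples, not over the $v$-orbit, so it could not repair an exact identity. It is also unnecessary: $\Gamma_v^{\geo,*}$ is defined so that the $\flat$-truncation only affects terms with $\deg\afk\geq d$, as the first line of the computation in the proof of Theorem~\ref{thm: period-vgamma-x} shows.
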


\subsection{Period interpretation of \texorpdfstring{$v$}{v}-adic special gamma values}
\label{sec: v-gamma-period}

By Proposition~\ref{prop: v-iso-period-Mx} and Remark~\ref{rem: period-omega-a}, we obtain the following period interpretation:

\begin{thm}\label{thm: period-vgamma-x}
Suppose $v \nmid \nfk$. 
Given $x \in \nfk^{-1}A\setminus A$ and $\ua \in (\uA/\unfk)^{\times}$, we have
\[
\hspace{3.5cm}
\sigma_{v,\cris}(\omega_{\ua}^{x})
=  \Gamma_v^{\geo,*}\lbangle vax\rbangle^{-1} \cdot \omega_{\uv \ua}^x
\hspace{1.5cm} 
(\text{Here $a = \ua(\theta) = \iota(\ua)$}).
\]
\end{thm}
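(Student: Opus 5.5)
By Proposition~\ref{prop: v-iso-period-Mx} and Remark~\ref{rem: period-omega-a}, the proof reduces to a pure identity in $k_v$:
\[
\prod_{N=0}^{d-1} \prod_{\afk \in A_{+,N}}\Big(1+\frac{y}{\afk}\Big) \cdot \prod_{N=d}^{\infty}
\frac{\prod_{\afk \in A_{+,N}}(1+y/\afk)}{\prod_{\afk \in A_{+,N-d}}(1+z/\afk)}
= \Gamma_v^{\geo,*}\lbangle vax\rbangle^{-1},
\]
where $y\assign\lbangle vax\rbangle$ and $z\assign \lbangle ax\rbangle$. Both are fractional parts with $|\cdot|_\infty<1$, so $\lbangle y\rbangle = y$. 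The plan is to rewrite the left side as a limit of finite products and match it with the definition \eqref{eqn: normal-v-gamma}.

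\textbf{Step 1: truncate and sort by divisibility.} Truncate the left side at degree $N<D$ and let $D\to\infty$. This gives
\[
\frac{\prod_{\deg \afk<D}(\afk+y)/\afk}{\prod_{\deg \bfk<D-d}(\bfk+z)/\bfk}.
\]
In the numerator, split the monic $\afk$ of degree $<D$ according to whether $v\mid \afk+y$. Since $y\in k\cap A_v$, the condition $\afk\equiv -y \pmod v$ has a unique residue solution $r$ with $\deg r<d$, namely $r\equiv -y_\circ$. The solutions are therefore $\afk = \afk_0 + v\bfk'$ with $\deg\afk_0<d$ and $\bfk'$ monic of degree $<D-d$, together with possibly one extra term of degree $<d$ when $\afk_0$ itself is monic.

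\textbf{Step 2: cancel against the denominator.} For $\afk = \afk_0+v\bfk'$ we have $\afk+y = v(\bfk' + (\afk_0+y)/v)$. The key congruence to check is $(\afk_0+y)/v \equiv z$ modulo $A$, using $y\equiv va x \pmod A$ and $z\equiv ax \pmod A$; in fact $(y+\afk_0)/v - z\in A$ needs to be verified. Granting this, those factors cancel the denominator up to the powers of $v$, after translating $\bfk'$ by an element of $A$, which is harmless because the index set shifts within monic polynomials of the same degree once the degree exceeds that of the shift. The cancelled denominator factors $\bfk'$ must be matched with $\afk_{\flat}=1$ contributions. What remains is
\[
\prod_{\deg\afk<D}\frac{(\afk+y)_\flat}{\afk_\flat}
\]
times a boundary factor. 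That boundary factor is the extra term $\afk=-y_\circ$ when $-y_\circ$ is monic of degree $<d$, which corresponds to $\langle -y_\circ/v\rangle_{\geo}=1$, and it contributes $(v\,\partial_v y)^{\pm1}$. The $v$-powers coming from the numerator and from the denominators $\afk$ with $v\mid\afk$ should cancel, since the counts of such $\afk$ agree degree by degree.

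\textbf{Step 3: pass to the limit.} Letting $D\to\infty$, the truncated products converge in $k_v$ to the infinite product in $\Gamma_v^{\geo}$, which yields exactly $\Gamma_v^{\geo,*}\lbangle vax\rbangle^{-1}$.

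The main obstacle is the bookkeeping in Step 2: making the translation by an element of $A$ legitimate for the finite truncations, and getting the sign and exponent of the boundary factor $(v\,\partial_v y)^{-\langle -y_\circ/v\rangle_{\geo}}$ correct. I would first check the case $d=1$ and a small $x$ by hand to fix the conventions.
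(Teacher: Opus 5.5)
Your plan is the paper's argument. You reduce via Proposition~\ref{prop: v-iso-period-Mx} and Remark~\ref{rem: period-omega-a} to the identity in $k_v$, then sort monic $\afk$ by whether $v$ divides $\afk$ or $\afk+y$; the paper does this separately in each degree $N$, which is equivalent to your truncation at $D$.

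The step you flag as the main obstacle goes away once you see that $(\afk_0+y)/v$ is not just congruent to $z$ but equal to it:
\begin{itemize}
\item The difference $(\afk_0+y-vz)/v$ lies in $A$, because $y-vz\in A$ and $\afk_0+y-vz$ is divisible by $v$ in $A_v$.
\item It has $|\cdot|_\infty<1$, since $\deg\afk_0<d$ and $|y|_\infty,|z|_\infty<1$, so it is $0$.
\end{itemize}

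Hence for each $N\geq d$ you get exactly $\prod_{\afk\in A_{+,N}}(1+y/\afk)/\prod_{\mathfrak{b}\in A_{+,N-d}}(1+z/\mathfrak{b})=\prod_{\afk\in A_{+,N}}(\afk+y)_\flat/\afk_\flat$, with no translation at all. Your ``harmless translation'' justification would in fact fail in low degrees. There a shift by a nonzero element of $A$ does not permute $A_{+,M}$ and produces the finite correction factors of Proposition~\ref{prop: FE}~(1).

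The boundary factor $\afk_0+y=v\,\partial_v y$ sits in the numerator, so its exponent is $+\langle -y_\circ/v\rangle_{\geo}$. That is exactly what $\Gamma_v^{\geo,*}\lbangle y\rbangle^{-1}$ requires. This is the paper's one observation: $\langle -y_\circ/v\rangle_{\geo}=1$ if and only if some monic $\afk_0$ with $\deg\afk_0<d$ satisfies $\afk_0+y=vz$.
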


\begin{proof}
Observe that $\langle -\lbangle vax\rbangle_{\circ}/v \rangle_{\geo} = 1$ if and only if there exists $\afk_0 \in A_+$ with $\deg \afk_0 < d$ such that $\afk_0 +\lbangle vax \rbangle =  v \lbangle ax \rbangle$.
Hence
\begin{align*}
\Gamma_v^{\geo,*}\lbangle vax \rbangle^{-1}
&= \prod_{N=0}^{d-1}\prod_{\afk \in A_{+,N}}\left(\frac{\afk + \lbangle vax\rbangle}{\afk}\right) \cdot \prod_{N=d}^{\infty}\prod_{\afk \in A_{+,N}}\left(\frac{(\afk+\lbangle vax \rbangle)_{\flat}}{\afk_{\flat}}\right) \\
&= \prod_{N=0}^{d-1}\prod_{\afk \in A_{+,N}}\left(1+ \frac{ \lbangle vax\rbangle}{\afk}\right) \cdot \prod_{N=d}^{\infty}
\frac{\displaystyle\prod_{\afk\in A_{+,N}}\left(1+\frac{\lbangle vax \rbangle}{\afk}\right)}{\displaystyle\prod_{\afk\in A_{+,N-d}}\left(1+\frac{\lbangle ax \rbangle}{\afk}\right)}
.
\end{align*}
Therefore the result holds by the formula in Remark~\ref{rem: period-omega-a}.
\end{proof}

\begin{rem}\label{rem: v-LR-conj}
Let $\beta = \{\omega_{\ua}^x\mid \ua \in (\uA/\unfk)^{\times}\}$ as above.
Then the matrices $[\Phi_{M_x,w}]_{\beta}$ and $[\tilde{\sigma}_{v,\cris}]_{\bar{\beta}}$ are completely determined by \eqref{eqn: Phi-M-x} and \eqref{eqn: v-cris-M-x}, and the normalized special value $\Gamma_v^{\geo,*}(vax)$ in Theorem~\ref{thm: period-vgamma-x} can be explicitly expressed in terms of the de~Rham--crystalline comparison periods in the following simple form by Proposition~\ref{prop: v-iso-period-Mx}:
\begin{align*}
\Gamma_{v}^{\geo,*}\lbangle vax \rbangle
& = \left(\prod_{N=1}^d g_x^{(N)}(\xi_{\uv\ua})^{-1}\right)
\cdot \frac{\psi_{x}^{(1)}(\xi_{\ua})}{\psi_x^{(d+1)}(\xi_{\uv\ua})} \\
& = 
\left(\bar{g}_{x,v}^{(d)}(\xi_{\uv\ua})^{-1} \cdot \prod_{N=1}^{d-1} \frac{g_x^{(N)}(\xi_{\ua})}{g_x^{(N)}(\xi_{\uv\ua})}\right)
\cdot   \frac{\psi_{x}^{(d)}(\xi_{\ua})}{\psi_x^{(d)}(\xi_{\uv\ua})}.
\end{align*}
\end{rem}
${}$\\

In general, for effective $\boxx = \sum_x n_x [x] \in \Ascr_{\unfk}^{\geo}$ and $a \in (A/\nfk)^{\times}$, we set
\begin{equation}\label{eqn: a-act}
a* \boxx \assign \sum_x n_x [ax] \quad \text{ and } \quad 
\Gamma_v^{\geo,*}\lbangle \boxx \rbangle \assign \prod_{x}\Gamma_v^{\geo,*}\lbangle x \rbangle^{n_x}.
\end{equation}
Combining Proposition~\ref{prop: v-iso-period-Mboxx} and Theorem~\ref{thm: period-vgamma-x}, we arrive at:

\begin{thm}\label{thm: period-vgamma-boxx}
Given effective $\boxx \in \Ascr_{\unfk}^{\geo}$with $\wt(\boxx)>0$ and $\ua \in (\uA/\unfk)^{\times}$, we have
\[
\hspace{3.5cm}
\sigma_{v,\cris}(\omega_{\ua}^{\boxx}) = \Gamma_v^{\geo,*}\lbangle va*\boxx\rbangle^{-1} \cdot \omega_{\uv\ua}^{\boxx}
\hspace{1.5cm} 
(\text{Here $a = \ua(\theta) = \iota(\ua)$}).
\]
\end{thm}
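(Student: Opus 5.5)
The plan is to reduce Theorem~\ref{thm: period-vgamma-boxx} to the single-$x$ computation already carried out. The first input is Proposition~\ref{prop: v-iso-period-Mboxx}, which gives
\[
\sigma_{v,\cris}(\omega_{\ua}^{\boxx}) = \left(\prod_{N=1}^{d}g_{\boxx}^{(N)}(\xi_{\uv\ua})\right)\cdot\frac{\psi_{\boxx}^{(d+1)}(\xi_{\uv\ua})}{\psi_{\boxx}^{(1)}(\xi_{\ua})}\cdot\omega_{\uv\ua}^{\boxx}.
\]
Write $\boxx=\sum_i n_i[x_i]+\sum_{i'} n'_{i'}[a_{i'}]$ with $x_i\notin A$ and $a_{i'}\in A$. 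By definition $g_{\boxx}=\prod_i g_{x_i}^{n_i}$ and $\psi_{\boxx}=\prod_i\psi_{x_i}^{n_i}$. Frobenius twisting $h\mapsto h^{(s)}$ and evaluation at the $K_{\unfk,w}$-valued points $\xi_{\ua}$, $\xi_{\uv\ua}$ are ring homomorphisms, so they are multiplicative. Hence the scalar above factors as
\[
\prod_i\left[\left(\prod_{N=1}^{d}g_{x_i}^{(N)}(\xi_{\uv\ua})\right)\frac{\psi_{x_i}^{(d+1)}(\xi_{\uv\ua})}{\psi_{x_i}^{(1)}(\xi_{\ua})}\right]^{n_i}.
\]

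Next I identify each bracket. Remark~\ref{rem: period-omega-a} together with the computation in the proof of Theorem~\ref{thm: period-vgamma-x} shows that the bracket for $x_i$ equals $\Gamma_v^{\geo,*}\lbangle va x_i\rbangle^{-1}$. That argument used only Theorem~\ref{thm: Coleman-fun}(1), \eqref{eqn: bar-g-inv}, and the definition \eqref{eqn: normal-v-gamma}, all of which hold for each $x_i$ individually. So the total scalar is $\prod_i\Gamma_v^{\geo,*}\lbangle vax_i\rbangle^{-n_i}$.

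It remains to account for the terms $[a_{i'}]$ with $a_{i'}\in A$, where $va a_{i'}\in A$. Since $\lbangle va a_{i'}\rbangle=0$, the product in \eqref{eqn: normal-v-gamma} is $1$. Also $\lbangle 0\rbangle_\circ=0$ and $\partial_v 0=0$, and $\langle 0\rangle_{\geo}=0$ because $0\in A$, so with the convention $0^0=1$ we get $\Gamma_v^{\geo,*}\lbangle va a_{i'}\rbangle=1$. Thus these terms contribute trivially, matching the fact that they do not enter $g_{\boxx}$. By \eqref{eqn: a-act} the scalar is then $\Gamma_v^{\geo,*}\lbangle va*\boxx\rbangle^{-1}$, as claimed.

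The main obstacle is making sure the basis $\omega_{\ua}^{\boxx}$ is normalized compatibly, so that Proposition~\ref{prop: v-iso-period-Mboxx} really holds with exactly the factor $\prod_{N=1}^d g_{\boxx}^{(N)}(\xi_{\uv\ua})$. The natural choice is $\omega_{\ua}^{\boxx}:=\big(\prod_{N=1}^{d-1}g_{\boxx}^{(N)}(\xi_{\ua})\big)^{-1}g_{\boxx,v}e_{\ua}\bmod(t-\theta)$. The proof of Proposition~\ref{prop: v-iso-period-Mx} then goes through verbatim for this basis. The one input to check is that the analogue of \eqref{eqn: bar-g-inv}, namely $\bar g_{\boxx}^{(s+d+1)}(\xi_{\uv\ua})=\bar g_{\boxx}^{(s+1)}(\xi_{\ua})$, holds; it does, since it is multiplicative in $\boxx$.
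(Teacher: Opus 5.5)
Your proposal is correct and follows the paper's route: the paper obtains this theorem by combining Proposition~\ref{prop: v-iso-period-Mboxx} with the single-$x$ identity of Theorem~\ref{thm: period-vgamma-x}, using that $g_{\boxx}=\prod_x g_x^{n_x}$ and $\psi_{\boxx}=\prod_x\psi_x^{n_x}$ factor multiplicatively under twisting and evaluation. Your extra check is also correct: terms $[b]$ with $b\in A$ contribute $\Gamma_v^{\geo,*}\lbangle 0\rbangle=1$, a detail the paper leaves implicit.
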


\begin{rem}\label{rem: period-Gauss-vmidn}
Suppose $v \nmid \nfk$. When $x \in \nfk^{-1}A\setminus A$ and $\ell \in \NN$ so that $v^{\ell} \equiv 1 \bmod \nfk$, let 
\[\boxx_v = \sum_{i=0}^{\ell-1}[v^ix] \ \in \Ascr^{\geo}_{\nfk}.
\]
From the Gross--Koblitz--Thakur formula stated in Theorem~\ref{thm: G-K-T-formula-geo}, we have
\[
\sigma_{v,\cris}(\omega_{\ua}^{\boxx_v}) = G_{\ell}^{\geo}(ax) \cdot \omega_{\uv\ua}^{\boxx_v}.
\]
This provides a period interpretation of the geometric Gauss sums.
\end{rem}

Taking suitable $\boxx \in \Ascr_{\unfk}^{\geo}$, we shall establish a $v$-adic Chowla--Selberg formula in the next section.

\section{\texorpdfstring{$v$}{v}-adic Chowla--Selberg formula}\label{Sec: v-CS}

Recall that $\uv=\iota^{-1}(v)\in\uA_+$.
Let $\unfk\in\uA_+$ with $\uv\nmid\unfk$, and let $\uK$ be an intermediate field of $\uK_{\unfk}/\uk$. Denote by $\uO_{\uK}$ the integral closure of $\uA$ in $\uK$.
In this section, we apply the theory of complex multiplication developed in \cite{BCPW22} to derive a $v$-adic Chowla–Selberg formula for $t$-motives over $\bar{k}$ with complex multiplication by $\uO_{\uK}$.
Although the CM theory in \cite{BCPW22} is formulated in terms of dual $t$-motives, its results can be translated directly into the language of $t$-motives.

\subsection{Isogeny theorem of CM \texorpdfstring{$t$}{t}-motives}\label{sec: CM}

A finite extension $\uF$ of $\uk$ is \emph{totally real} if the infinite place $\uinfty$ of $\uk$ splits completely in $\uF$.
Note that $\uF$ must be separable over $\uk$, and a compositum of two totally real field extensions of $\uk$ is still totally real.
This implies that every finite extension of $\uk$ has a maximal totally real subfield.

Let $\uK$ be a \emph{CM field over $\uk$}, that is, $\uK$ is a finite separable extension of $\uk$ satisfying that
every infinite place of the totally real subfield $\uK^+$ of $\uK$ is non-split in $\uK$.
For instance, an imaginary field $\uK$ over $\uk$ (i.e.~the infinite place of $\uk$ is non-split in $\uK$) is a CM field (with $\uK^+ = \uk$), and every totally real field $\uF$ over $\uk$ is also considered as a CM field in our setting.

\begin{defn}\label{defn: CM type}
Let $J_{\uK}$ be the set of $\FF_q$-algebra embeddings $\xi:\uK\rightarrow \bar{k}$ with $\xi(t) = \theta$,
and $I_{\uK}$ be the free abelian group generated by $J_{\uK}$.
A \emph{CM type of $\uK$} is a sum $\xi_1+\cdots + \xi_d \in I_{\uK}$ satisfying $d = [\uK^+:\uk]$ and $J_{\uK^+} = \big\{\xi_1\big|_{\uK^+},...,\xi_d\big|_{\uK^+}\big\}$. We call $\Xi \in I_{\uK}$ a \emph{generalized CM type of $\uK$} if $\Xi = \sum_{i=1}^w \Xi_i$, where $\Xi_i$ is a CM type of $\uK$ for $1\leq i \leq w$, and $w$ is called the \emph{weight of $\Xi$} and denoted by $\wt(\Xi)$.
\end{defn}

\begin{defn}\label{defn: CM t-motive}
(cf.~\cite[Proposition 4.3.4]{BCPW22})
Let $\uK$ be a CM field over $\uk$ and $\Xi$ be a generalized CM type of $\uK$. A CM $t$-motive with CM type $(\uK,\Xi)$ over $\bar{k}$ is a $t$-motive $M$ of rank $[\uK:\uk]$ over $\bar{k}$ satisfying:
\begin{itemize}
    \item[(1)] There exists an $\uA$-algebra homomorphism $\uO_{\uK}\hookrightarrow \End(M)$, where $\uO_{\uK}$ is the integral closure of $\uA$ in $\uK$.
    \item[(2)] Write $\Xi = \sum_{\xi \in J_{\uK}} m_{\xi} \xi \in I_{\uK}$. Regarding $M$ as a projective module of rank one over $\bar{k}\otimes_{\FF_q}\uO_{\uK}$, we have
    \[
    \frac{M}{\tau M} \cong \prod_{\xi \in J_{\uK}}\frac{\bar{k}\otimes_{\FF_q}\uO_{\uK}}{\Pfk_{\xi}^{m_{\xi}}},
    \]
    where $\Pfk_{\xi}$ is the kernel ideal of the $\bar{k}$-algebra homomorphism
    $\bar{k}\otimes_{\FF_q} \uO_{\uK}\rightarrow \bar{k}$ sending $c\otimes \ualpha$ to $c \cdot \xi(\ualpha)$ for every $c \in \bar{k}$ and $\ualpha \in \uO_{\uK}$.
    \item[(3)] $M$ is pure.
\end{itemize}
\end{defn}

\begin{ex}\label{ex: CM-t-mot}
We provide the examples of CM $t$-motives to be used as follows.
\begin{itemize}
    \item[(1)] ({\it Carlitz $t$-motive}, see~\cite[proof of Proposition 5.2.3]{BCPW22}.)\\
    Let $\xi_0: \uk\cong k \subset \bar{k}$ be the $\FF_q$-algebra isomorphism sending $t$ to $\theta$.
    Then Carlitz $t$-motive $M_C$ has CM type $(\uk,\xi_0)$. 
    
    \item[(2)] ({\it Soliton $t$-motives}, see~\cite[3.3.8]{Sinha97}, \cite[6.4.2]{ABP04}, and \cite[Lemma 5.3.1]{Wei26}.)\\  Given an effective $\boxx \in \Ascr^{\geo}_{\nfk}$ with $\wt(\boxx)>0$, the soliton $t$-motive $M_{\boxx}$ has CM by $\uO_{\unfk}$. Moreover, write $\boxx = \sum_x n_x [x]$.
    The CM type of $M_{\boxx}$ is $(\uK_{\unfk},\Xi_{\boxx})$, where
    \[
    \Xi_{\boxx}\assign \sum_{\ua \in (\uA/\unfk)^{\times}}\left(\sum_{x} n_x \langle ax \rangle_{\geo}\right) \cdot \xi_{\ua} \quad \in I_{\uK_{\unfk}}.
    \]
    \item[(3)] ({\it $t$-motives associated with CM Drinfeld modules}, see~\cite[after Theorem 6.2.2]{Wei26}.)\\
    Let $\uE_{\rho}$ be a CM Drinfeld module of rank $r$ over $\bar{k}$ with $\End(\uE_{\rho}) \cong \uO_{\uK}$ for an imaginary field $\uK$ over $\uk$.
    Extending $\rho$ to an $\FF_q$-algebra homomorphism from $\uO_{\uK}$ to $\bar{k}[\tau]$, the constant terms $\partial \rho_{\ualpha} \in \bar{k}$ for $\ualpha \in \uO_{\uK}$ (defined at the end of Section~\ref{sec: DM}) induces an $\FF_q$-algebra embedding $\xi_{\rho}:\uK\rightarrow \bar{k}$.
    Then the $t$-motive $M_{\rho}$ associated with $\uE_{\rho}$ is a CM $t$-motive with CM type $(\uK,\xi_{\rho})$.
\end{itemize}
\end{ex}

We recall the following fundamental properties of CM $t$-motives.

\begin{prop}\label{prop: CM}
${}$
\begin{itemize}
\item[(1)] {\rm (See~\cite[Proposition 4.2.7]{BCPW22})} Let $\uK$ be a CM field over $\uk$. Given CM $t$-motives $M_1$ and $M_2$ with CM types $(\uK,\Xi_1)$ and $(\uK,\Xi_2)$, respectively, the tensor product
\[
M_1\otimes_{\uK}M_2 \assign M_1\underset{\bar{k}\otimes_{\FF_q}\uO_{\uK}}{\otimes}M_2 \quad \text{ with }\quad \tau(m_1\otimes m_2)\assign (\tau m_1)\otimes (\tau m_2)
\]
is a CM $t$-motive with CM type $(\uK,\Xi_1+\Xi_2)$ over $\bar{k}$.
\item[(2)] {\rm (See~\cite[proof of Theorem 5.3.2 (2)]{BCPW22})} Let $\uK$ and $\uK'$ be CM fields over $\uk$ with $\uK\subset \uK'$.
Given a CM $t$-motive $M$ with CM type $(\uK,\Xi)$, the \emph{inflation}
\[
\Inf_{\uK}^{\uK'}(M)\assign \uO_{\uK'}\otimes_{\uO_{\uK}} M
\]
is a CM $t$-motive with CM type $(\uK',\Inf_{\uK}^{\uK'}(\Xi))$,
where for $\Xi = \sum_{\xi \in J_{\uK}}m_{\xi} \xi \in I_{\uK}$,
we put
\[
\Inf_{\uK}^{\uK'}(\Xi) \assign \sum_{\xi \in J_{\uK}}m_{\xi}\left(\sum_{\xi' \in J_{\uK'},\ \xi'\big|_{\uK}=\xi} \xi'\right).
\]
\end{itemize}
\end{prop}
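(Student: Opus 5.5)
The plan is to verify directly, in each case, the three conditions of Definition~\ref{defn: CM t-motive}, together with the $t$-motive axiom \eqref{eqn: *-con}. Put $R\assign \bar{k}\otimes_{\FF_q}\uO_{\uK}$. This is a Dedekind domain, and the ideals $\Pfk_{\xi}$, $\xi\in J_{\uK}$, are pairwise distinct maximal ideals. Since $\bar{k}$ is perfect, $\bar{k}\cdot\tau M=\tau M$. Condition (2) for a CM $t$-motive $M$ can then be rephrased as follows: the linearization $\tau^{\rm lin}:R\otimes_{R,{\rm Frob}}M\to M$ is injective with image $I_{\Xi}M$, where
\[
I_{\Xi}\assign \prod_{\xi}\Pfk_{\xi}^{m_{\xi}}
\]
is an invertible ideal. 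The passage between $M/I_{\Xi}M$ and $\prod_{\xi} R/\Pfk_{\xi}^{m_{\xi}}$ is the Chinese remainder theorem, using that $M$ is projective of rank one over $R$. Since every $\Pfk_{\xi}$ contains $t-\theta$, some power $(t-\theta)^n$ lies in $I_{\Xi}$, which recovers \eqref{eqn: *-con}.

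For (1), I first note that $M_1\otimes_{\uK}M_2$ is projective of rank one over $R$. Hence it is free over $\bar{k}[t]$ of rank $[\uK:\uk]$, and $\uO_{\uK}$ acts on it. The diagonal $\tau$ is well defined because $\tau$ is Frobenius-semilinear over $\bar{k}$ and commutes with $\uO_{\uK}$. The linearization of the diagonal $\tau$ is the tensor product of the two linearizations, so its image is
\[
I_{\Xi_1}I_{\Xi_2}(M_1\otimes_R M_2)=I_{\Xi_1+\Xi_2}(M_1\otimes_R M_2),
\]
and the Chinese remainder theorem gives condition (2) with exponents $m_{1,\xi}+m_{2,\xi}$. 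A sum of generalized CM types is again a generalized CM type, of weight $\wt(\Xi_1)+\wt(\Xi_2)$. For purity I would invoke Anderson's result that the tensor product of pure $t$-motives is pure. Its weight is the sum of the weights, which here is consistent with the weights $\wt(\Xi_i)/[\uK:\uK^+]$.

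For (2), put $R'\assign\bar{k}\otimes_{\FF_q}\uO_{\uK'}$. This ring is projective over $R$, and $\Inf_{\uK}^{\uK'}(M)=R'\otimes_R M$, with $\tau$ extended by the identity on $\uO_{\uK'}$. It is projective of rank one over $R'$, hence has rank $[\uK':\uk]$ over $\bar{k}[t]$, and it carries an action of $\uO_{\uK'}$. By flatness, the image of the linearization is $I_{\Xi}R'\cdot M'$, where $M'\assign\Inf_{\uK}^{\uK'}(M)$. The key point is that
\[
\Pfk_{\xi}R'=\prod_{\xi'|_{\uK}=\xi}\Pfk_{\xi'},
\]
with all exponents equal to $1$. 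This holds because $\uK'/\uk$ is separable, so $\bar{k}\otimes_{\uk}\uK'\cong\prod_{\xi'}\bar{k}$ is étale over the points $\Pfk_{\xi}$. Consequently $I_{\Xi}R'=I_{\Inf(\Xi)}$, and condition (2) follows.

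For purity I would use that $\uO_{\uK'}$ is projective over $\uO_{\uK}$. This makes $\Inf_{\uK}^{\uK'}(M)$ a direct summand of $M^{\oplus n}$ as a $\bar{k}[t,\tau]$-module, and direct summands of pure $t$-motives are pure.

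The remaining task is combinatorial: check that $\Inf_{\uK}^{\uK'}$ carries a CM type of $\uK$ to a generalized CM type of $\uK'$. Each $\xi\in J_{\uK^+}$ has exactly $[\uK':\uK]$ extensions to $\uK'$. One then has to compare how these extensions restrict to $J_{\uK'^+}$, using $\uK'^+\cap\uK\supseteq\uK^+$ and the non-splitting at the infinite places. I expect this bookkeeping, in particular showing that $\Inf(\Xi)$ decomposes as a sum of genuine CM types of $\uK'$ of total weight $\wt(\Xi)[\uK':\uK'^{+}]^{-1}[\uK':\uK][\uK:\uK^+]^{\phantom{-1}}$, to be the main obstacle. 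I would also double-check the unramifiedness claim at the points $\xi'$.
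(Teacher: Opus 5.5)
\textbf{Gap: the generalized CM type property of the inflation is left unproved.} The paper gives no argument of its own here; it defers to BCPW22. Your direct verification of the module-theoretic conditions is sound. Indeed, $\tau M=I_\Xi M$ is an equivalent form of condition (2), and the diagonal $\tau$ on $M_1\otimes_{\uK}M_2$ has image $I_{\Xi_1}I_{\Xi_2}(M_1\otimes_{\uK}M_2)$. The identity $\Pfk_{\xi}R'=\prod_{\xi'|_{\uK}=\xi}\Pfk_{\xi'}$ also holds: $\xi$ is injective on $\uO_{\uK}$, so $R'/\Pfk_{\xi}R'\cong\bar{k}\otimes_{\xi,\uK}\uK'\cong\prod_{\xi'|_{\uK}=\xi}\bar{k}$ by separability.

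The genuine gap is the step you leave open: that $\Inf_{\uK}^{\uK'}(\Xi)$ is a generalized CM type of $\uK'$. Without it, $\Inf_{\uK}^{\uK'}(M)$ is not a CM $t$-motive in the sense of Definition~\ref{defn: CM t-motive}. Your predicted weight is also wrong. Take $\uK=\uk$, $\Xi=\xi_0$ and $\uK'$ imaginary quadratic. Then $\Inf_{\uk}^{\uK'}(\xi_0)$ is the sum of both embeddings, and it has weight $2$, since $\uK'^+=\uk$ makes CM types of $\uK'$ single embeddings. Your formula gives $1$. Degree counting fixes the correct value once the CM type property is known. The total degree is $[\uK':\uK][\uK^+:\uk]\wt(\Xi)$, and each CM type of $\uK'$ has degree $[\uK'^+:\uk]$, so the weight must be $\wt(\Xi)[\uK':\uK'^+]/[\uK:\uK^+]$.

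\textbf{How to close the gap.}
\begin{itemize}
\item[(a)] An effective $\Xi'\in I_{\uK'}$ is a generalized CM type of weight $w'$ if and only if, for every $\eta'\in J_{\uK'^+}$, its coefficients over the fiber $\{\xi'\colon \xi'|_{\uK'^+}=\eta'\}$ sum to $w'$. To see this, list each fiber with multiplicity and take the $i$-th entries.
\item[(b)] Knowing only $\uK\cap\uK'^+=\uK^+$ does not suffice in general. What makes it work is that total reality is cut out by a normal subgroup. Let $L$ be a Galois closure of $\uK'/\uk$, $G=\Gal(L/\uk)$, and $N\trianglelefteq G$ the normal subgroup generated by the decomposition groups above $\uinfty$. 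A subfield $F\subseteq L$ is totally real exactly when $\Gal(L/F)\supseteq N$.
\item[(c)] Put $H'=\Gal(L/\uK')\subseteq H_1=\Gal(L/\uK)$. Then $\Gal(L/\uK'^+)=H'N$ and $\Gal(L/\uK^+)=H_1N=H'N\cdot H_1$.
\item[(d)] Identify embeddings with cosets. Restriction maps the fiber $\{ghH'\colon h\in H'N\}$ over $\eta'=gH'N$ onto the whole fiber $\{gxH_1\colon x\in H_1N\}$ over $\eta'|_{\uK^+}$. Each point is hit exactly $[H_1\cap H'N:H']=[\uK':\uK\uK'^+]$ times.
\item[(e)] Hence every fiber sum of $\Inf_{\uK}^{\uK'}(\Xi)$ equals $[\uK':\uK\uK'^+]\cdot\wt(\Xi)$, which is what (a) requires. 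In the paper's application $\uK'=\uK_{\unfk}$ is abelian, and this count is immediate.
\end{itemize}

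\textbf{Smaller points.}
\begin{itemize}
\item[(1)] For purity in part (1), Anderson's theorem concerns $M_1\otimes_{\bar{k}[t]}M_2$, whereas $M_1\otimes_{\uK}M_2$ is only a quotient of it, so one more step is needed. After base change to $\bar{k}(\!(1/t)\!)$, the separability idempotent of $\uK\otimes_{\uk}\uK$ commutes with $\tau$. It exhibits $M_1\otimes_{\uK}M_2$ as a $\tau$-stable direct summand, and direct summands of pure objects are pure. Alternatively, rerun Anderson's lattice argument with $\uO_{\uK}$-stable lattices.
\item[(2)] $\bar{k}\otimes_{\FF_q}\uO_{\uK}$ is a domain only if $\FF_q$ is algebraically closed in $\uK$; for example, $\FF_{q^2}(t)$ is a CM field in the paper's sense. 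In general it is a finite product of Dedekind domains, which does not affect your Chinese-remainder argument.
\item[(3)] An element of $J_{\uK^+}$ has $[\uK':\uK^+]$ extensions to $\uK'$, not $[\uK':\uK]$.
\end{itemize}
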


Furthermore, we have the following isogeny theorem for CM $t$-motives:

\begin{thm}\label{thm: isog-thm}
{\rm (See~\cite[Proposition 4.2.8]{BCPW22})} Let $M_1$ and $M_2$ be CM $t$-motives with the same CM type $(\uK,\Xi)$ over $\bar{k}$.
Then $M_1$ and $M_2$ are $\uK$-isogenous over $\bar{k}$ (i.e.~the isogeny preserves the $O_{\uK}$-action on both sides).
\end{thm}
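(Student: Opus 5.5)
The plan is to build the isogeny as a $\tau$-invariant element of an internal Hom, and to show that such invariants exist by a Lang-type triviality argument over $\bar{k}$.

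\textbf{Setting up the Hom module.} By Definition~\ref{defn: CM t-motive}, each $M_i$ is a projective module of rank one over $R\assign\bar{k}\otimes_{\FF_q}\uO_{\uK}$. Its $\tau$-action is $\sigma$-semilinear, where $\sigma$ is the $q$-Frobenius on the $\bar{k}$-factor. Put
\[
N\assign \Hom_R(M_1,M_2),
\]
which is again a projective $R$-module of rank one. A $\uK$-equivariant morphism $M_1\to M_2$ over $\bar{k}$ is exactly an element $f\in N$ with $\tau\circ f=f\circ\tau$.

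To turn this commutation condition into a fixed-point condition, I invert the kernel ideals $\Pfk_{\xi}^{(i)}$ ($\xi\in J_{\uK}$, $i\geq 0$), where $\Pfk_{\xi}^{(i)}$ denotes the Frobenius twist of $\Pfk_{\xi}$. After this localization, $\tau$ becomes bijective on both $M_1$ and $M_2$. We may then define a $\sigma$-semilinear operator on $N$ by
\[
\tau_N(f)\assign \tau_{M_2}\circ f\circ\tau_{M_1}^{-1}.
\]
Condition~(2) of Definition~\ref{defn: CM t-motive} identifies $M_i/\tau M_i$ with $\prod_{\xi} R/\Pfk_{\xi}^{m_{\xi}}$. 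Since both motives have the same CM type $\Xi$, the divisor of $\tau_N$ on $\Spec R$ is $\Xi-\Xi=0$. Hence $\tau_N$ is an isomorphism $\sigma^*N\cong N$ on all of $\Spec R$, with no need to localize. So $N$ is a rank-one étale $\sigma$-module on the affine curve $\Spec \uO_{\uK}$, base-changed to $\bar{k}$.

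\textbf{Trivializing $N$.} Next I would show that the invariants $N^{\tau_N}$ form a projective $\uO_{\uK}$-module of rank one with
\[
R\otimes_{\uO_{\uK}}N^{\tau_N}\cong N.
\]
This is the function-field analogue of Katz's equivalence between étale $\varphi$-modules and lisse sheaves, combined with Lang's theorem over the algebraically closed field $\bar{k}$. Concretely, I would choose a local generator $n$ of $N$ and write $\tau_N(n)=u\cdot n$. Solving $\sigma(y)u=y$ amounts to an Artin--Schreier/Lang equation, which is solvable after a finite étale extension of $\bar{k}$; since $\bar{k}$ is algebraically closed, it is solvable over $\bar{k}$ itself.

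The main obstacle is the behaviour at the places of $\uK$ above $\uinfty$. Étaleness on the affine part alone only gives invariants forming a lattice in a rank-one étale sheaf. To get an honest nonzero invariant vector, one must know that the local $\sigma$-module at $\uinfty$ is also of slope $0$, i.e.\ unramified. This is where purity, condition~(3) of Definition~\ref{defn: CM t-motive}, enters. Since $M_1$ and $M_2$ are pure of the same weight $\wt(\Xi)/[\uK:\uK^+]$, the $\uinfty$-adic completion of $N$ is pure of weight $0$, hence isoclinic of slope $0$. I would try first to prove this by comparing the Dieudonn\'e--Manin slope decompositions at $\uinfty$. This step is the analogue of the rigid-analytic-triviality argument in \cite{BCPW22}, and if a direct argument proved awkward I would instead translate the statement to dual $t$-motives and invoke \cite[Proposition~4.2.8]{BCPW22} there.

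\textbf{Concluding.} Granting the trivialization, pick any nonzero $f\in N^{\tau_N}$. It commutes with $\tau$ and with $\uO_{\uK}$, so it is a $\uK$-morphism $M_1\to M_2$. Because $f$ is a nonzero map between torsion-free rank-one $R$-modules, it is injective. Finally, $\operatorname{coker} f$ is a torsion $R$-module supported at finitely many points, so it is finite-dimensional over $\bar{k}$. Therefore $f$ is a $\uK$-isogeny over $\bar{k}$.
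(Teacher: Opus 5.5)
The paper gives no argument of its own for this statement. It imports \cite[Proposition 4.2.8]{BCPW22}, which is stated for dual $t$-motives; that is exactly your fallback. Your direct route is genuinely different, and its first step is correct. By condition (2), the linearized maps $\sigma^*M_i\to M_i$ have the same divisor $\sum_\xi m_\xi\Pfk_\xi$. Hence $N=\Hom_R(M_1,M_2)$, with $R=\bar{k}\otimes_{\FF_q}\uO_{\uK}$, is a rank-one unit $\tau$-module on all of $\Spec R$.

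The gap is the next step, the existence of a nonzero $\tau_N$-invariant, and the justification you give for it is false. For $u\in R^\times$, the equation $\sigma(y)u=y$ is not a Lang equation over $\bar{k}$. There is no nontrivial finite \'etale extension of $\bar{k}$ to pass to, and Katz-type equivalences need finite or complete coefficient rings, not $\uO_{\uK}$.

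Here is a counterexample. Let $q$ be odd, let $\uK=\uk(\sqrt{D})$ with $D\in\uA$ monic squarefree of even degree, and let $\epsilon$ be a fundamental unit. Then $\divv(\epsilon)=m(w_1-w_2)$ with $m\neq 0$, where $w_1,w_2$ are the two $\FF_q$-rational points above $\uinfty$. The module $R$ with $\tau(y)=\epsilon\,\sigma(y)$ is a rank-one unit $\tau$-module. Since $\sigma$ fixes $w_1$, an invariant $y\neq 0$ would satisfy $\ord_{w_1}(y)=m+\ord_{w_1}(y)$, which is impossible. So unit-ness on $\Spec R$ alone can give no invariants at all, not merely ``a lattice.'' Your concrete argument never looks at $\uinfty$, so it would prove too much. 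Writing ``granting the trivialization'' grants precisely the content of the theorem. You rightly sense that purity must enter, but you never say how slope $0$ at $\uinfty$ produces an invariant.

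The missing mechanism is a global argument on the compactification.
\begin{itemize}
\item Extend $N$ to a line bundle $\mathcal{N}$ on the smooth projective model $\overline{X}_{\bar k}$ of $\Spec R$. Then $\tau_N\colon\sigma^*\mathcal{N}\to\mathcal{N}$ is meromorphic, with divisor $E$ supported above $\uinfty$.
\item For each place $w$ of $\uK$ above $\uinfty$, the $\bar{k}$-points above $w$ form a single $\sigma$-orbit of length $f_w$. Up to normalization, the slope of the $\tau$-stable summand $N_w$ of $N\otimes_{\bar{k}[t]}\bar{k}(\!(1/t)\!)$ equals the sum of the coefficients of $E$ over that orbit.
\item Your purity argument shows $N\otimes_{\bar{k}[t]}\bar{k}(\!(1/t)\!)$ is isoclinic of slope $0$, hence so is each $N_w$. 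So every orbit sum vanishes, and $E=\sigma^*D-D$ for some divisor $D$ above $\uinfty$.
\item Replacing $\mathcal{N}$ by $\mathcal{N}(D)$ makes $\tau_N$ an isomorphism on all of $\overline{X}_{\bar k}$.
\item Let $D_\infty$ be the reduced divisor of all points above $\uinfty$, which is $\sigma$-stable, and take $m\gg 0$. Then $V_m=H^0(\overline{X}_{\bar k},\mathcal{N}(mD_\infty))$ is a nonzero finite-dimensional $\bar{k}$-space carrying a bijective $\sigma$-semilinear endomorphism.
\item Lang's lemma gives $V_m=\bar{k}\otimes_{\FF_q}V_m^{\tau}\neq 0$. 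Any nonzero element of $V_m^{\tau}$ restricts to the desired $f\in N^{\tau_N}$.
\end{itemize}
If the constant field of $\uK$ is larger than $\FF_q$, then $R$ is a product of domains permuted transitively by $\sigma$. Such an $f$ is then nonzero on every component, so your injectivity argument still applies.

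With this step inserted, your argument becomes a complete proof intrinsic to $t$-motives, rather than a transport of the dual-$t$-motive statement in \cite{BCPW22}.
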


Let $M$ be a CM $t$-motive with CM type $(\uK,\Xi)$ over $\bar{k}$.
Recall that $\xi_0:\uk\rightarrow \bar{k}\subset \bar{k}_v$ sends $t$ to $\theta$.
We get
\[
\uK \hookrightarrow \bar{k}_{v}\underset{\xi_0,\, \uk}{\otimes} \uK \cong \frac{\bar{k}_v\otimes_{\FF_q}\uO_{\uK}}{(t-\theta)\cdot \bar{k}_v\otimes_{\FF_q}\uO_{\uK}} \stackrel{\sim}{\longrightarrow} \prod_{\xi \in J_{\uK}} \bar{k}_v, \quad \ualpha \longmapsto \big(\xi(\ualpha)\big)_{\xi\in J_{\uK}}, \quad \forall \alpha \in \uK.
\]
As $M$ is  projective of rank one over $\bar{k}\otimes_{\FF_q}\uO_{\uK}$, its de~Rham space $H_{\dR}(M,\bar{k}_v)$ is a free $\bar{k}_v\underset{\xi_0,\, \uk}{\otimes} \uK$-module of rank one.
Moreover, as mentioned in Remark~\ref{rem: period-Gauss-vmidn}~(2), it is known that every CM $t$-motive over $\bar{k}$ has ``good reduction everywhere''(see \cite[Theorem 15.4]{HaSi21} and \cite[Section 3.6]{Pel09}). Consequently:

\begin{cor}\label{cor: isog-cor}
Keep the notation as in {\rm Theorem~\ref{thm: isog-thm}}. A $\uK$-isogeny $f:M_1\rightarrow M_2$ induces a $\bar{k}_v\underset{\xi_0,\, \uk}{\otimes} \uK$-module isomorphism $f_{\dR}: H_{\dR}(M_1,\bar{k}_v)\stackrel{\sim}{\rightarrow} H_{\dR}(M_2,\bar{k}_v)$ which preserves the crystalline action on both sides.
\end{cor}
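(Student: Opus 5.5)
The plan is to deduce the corollary directly from the functoriality of the de~Rham and crystalline realizations, together with the equivariance \eqref{eqn: equivariance}. First, I would fix a finite extension $F_0$ of $k$ in $\bar{k}$ over which $M_1$, $M_2$, the $\uO_{\uK}$-actions on both, and the isogeny $f$ are all defined. Put $F := F_0 k_v \subset \bar{k}_v$. Since CM $t$-motives have good reduction everywhere, after possibly enlarging $F_0$ both $M_1$ and $M_2$ have good models over $O_F$; this makes $\Phi_{M_1}$, $\Phi_{M_2}$ and the crystalline actions $\sigma_{\cris}$ available.

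Next, I would show that $f_{\dR}$ is an isomorphism. As noted after Lemma~\ref{lem: dR-id}, any isogeny induces an isomorphism on $H_{\dR}(\cdot,F)$. To see this directly, take $g$ with $g\circ f=\ua$ and $f\circ g=\ua$ for some nonzero $\ua\in\uA$. Then $g_{\dR}\circ f_{\dR}$ and $f_{\dR}\circ g_{\dR}$ are both multiplication by $\iota(\ua)=\ua(\theta)\neq 0$ on the de~Rham spaces, because $t$ acts as $\theta$ modulo $(t-\theta)$. Hence $f_{\dR}$ is invertible, and base change to $\bar{k}_v$ preserves this.

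For $\bar{k}_v\otimes_{\xi_0,\uk}\uK$-linearity, the module structure on $H_{\dR}(M_i,\bar{k}_v)$ comes from $\ualpha\mapsto \ualpha_{\dR}$ for $\ualpha\in\uO_{\uK}$, extended by $\uk$-linearity. Since $f$ is a $\uK$-isogeny, $f\circ\ualpha=\ualpha\circ f$. Functoriality of $H_{\dR}$ then gives $f_{\dR}\circ\ualpha_{\dR}=\ualpha_{\dR}\circ f_{\dR}$, which is the claimed linearity.

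Finally, compatibility with the crystalline action is exactly the commutative square \eqref{eqn: equivariance}. That square follows from two facts: $\Phi_{M_2}\circ f_{\dR}=f_{\cris}\circ\Phi_{M_1}$, which holds by the uniqueness in Lemma~\ref{lem: key-iso}; and $f_{\cris}$ commutes with $\tau^d$, and hence with $\tilde\sigma_{\cris}$ as defined in \eqref{eqn: t-sigma-cris}. The one point needing care is that $f$ extends to a morphism of good models, which is Remark~\ref{rem: good-model}~(3). That extension must be applied over a field $F$ large enough that $f$ is defined over it, and this is why $F_0$ was chosen as above. I expect no step here to be a real obstacle.
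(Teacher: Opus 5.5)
Your proposal is correct and follows the route the paper intends. The paper gives no separate proof: it deduces the corollary (``Consequently'') from the good reduction of CM $t$-motives, the functoriality of $H_{\dR}$ (isogenies give isomorphisms, and commuting with $\uO_{\uK}$ gives $\bar{k}_v\otimes_{\xi_0,\uk}\uK$-linearity), and the equivariance \eqref{eqn: equivariance}. Your quasi-inverse argument via $g\circ f=\ua$, and your choice of $F$ large enough for $f$ and the good models, just spell out details the paper leaves implicit.
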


\begin{rem}\label{rem: dR-decomp}
${}$
\begin{itemize}
    \item[(1)] Let $M$ be a CM $t$-motive with CM type $(\uK,\Xi)$ over $\bar{k}$. Take $\omega^M \in H_{\dR}(M,\bar{k}_v)$ so that 
    \[
    H_{\dR}(M,\bar{k}_v)= (\bar{k}_v\underset{\xi_0,\, \uk}{\otimes} \uK)\cdot \omega^{M}.
    \]
    For each $\xi \in J_{\uK}$, let $e_{\xi} \in \bar{k}_v\underset{\xi_0,\, \uk}{\otimes}\ \uK$ be the idempotent associated with $\xi$, i.e.~$e_{\xi}^2 = e_{\xi}$ and $\ualpha \cdot e_{\xi} = \xi(\ualpha) \cdot e_{\xi}$ for every $\ualpha \in \uK$.
    Then $\omega^M_{\xi}\assign e_{\xi} \cdot \omega^M$ is an``eigen''-differential satisfying 
    \[
    \ualpha_{\dR}(\omega_{\xi}^M) = \xi(\ualpha) \cdot \omega_{\xi}^M, \quad \forall \ualpha \in \uO_{\uK}, \quad \text{ and } \quad 
    H_{\dR}(M,\bar{k}_v) = \underset{\xi \in J_{\uK}}{\bigoplus} \ \bar{k}_v \cdot \omega_{\xi}^M.
    \]
    \item[(2)] Given CM $t$-motives $M_1$ and $M_2$ with CM type $(\uK,\Xi_1)$ and $(\uK,\Xi_2)$,
    one has
    \[
    H_{\dR}(M_1\otimes_{\uK} M_2,\bar{k}_v) \cong H_{\dR}(M_1,\bar{k}_v) \underset{\bar{k}_v \underset{\xi_0,\, \uk}{\otimes}\uK}{\otimes} H_{\dR}(M_2,\bar{k}_v) = (\bar{k}_v \underset{\xi_0,\, \uk}{\otimes}\uK) \cdot \omega^{M_1}\otimes_{\uK} \omega^{M_2}.
    \]
    Moreover, if we take $\omega^{M_1\otimes_{\uK} M_2}$ be the generator of $H_{\dR}(M_1\otimes_{\uK} M_2)$ corresponding to $\omega^{M_1}\otimes_{\uK} \omega^{M_2}$,
    then
    \[
    \omega_{\xi}^{M_1\otimes_{\uK}M_2} \quad \text{ corresponds to}\quad  \omega_{\xi}^{M_1}\otimes_{\uK} \omega_{\xi}^{M_2}, \quad \forall \xi \in J_{\uK}.
    \]
    \item[(3)] Let $M$ be a CM $t$-motive with CM type $(\uK,\Xi)$ and $\uK'$ be another CM field with $\uK\subset \uK'$.
    We have the natural inclusion $\uK$-morphism (i.e.~preserves the $O_{\uK}$-action)
    \[
    M\hookrightarrow \uO_{\uK'}\otimes_{\uO_{\uK}} M = \Inf_{\uK}^{\uK'}(M), \quad m \longmapsto 1\otimes m, \quad \forall m \in M,
    \]
    which induces an embedding
    \[
    H_{\dR}(M,\bar{k}_v)\hookrightarrow H_{\dR}\big(\Inf_{\uK}^{\uK'}(M),\bar{k}_v\big).
    \]
    In particular, if $\omega^M$ is a generator of $H_{\dR}(M,\bar{k}_v)$ over $\bar{k}_v\underset{\xi_0,\uk}{\otimes}\uK$, then the above embedding sends $\omega^M$ to a generator of $H_{\dR}\big(\Inf_{\uK}^{\uK'}(M),\bar{k}_v\big)$
    over    $\bar{k}_v\underset{\xi_0,\uk}{\otimes}\uK'$, which we denote by $\omega^{\Inf_{\uK}^{\uK'}(M)}$.
    
    On the other hand, the trace map $\tr_{\uK'/\uK}: \uK'\longrightarrow \uK$ induces a $\uK$-morphism
    \begin{equation}\label{eqn: trace-morphism}
    \tr_{\uK'/\uK}: \Inf_{\uK}^{\uK'}(M) \rightarrow M, \quad \ualpha' \otimes m \longmapsto \tr_{\uK'/\uK}(\ualpha')\cdot m, \quad \forall \ualpha' \in \uO_{\uK'},\ m \in M.
    \end{equation}
    This corresponds to a $\bar{k}_v\underset{\xi,\uk}{\otimes}\uK$-linear map
\[
(\tr_{\uK'/\uK})_{\dR}: H_{\dR}\big(\Inf_{\uK}^{\uK'}(M),\bar{k}_v\big) \longrightarrow H_{\dR}(M,\bar{k}_v).
\]
Regarding $\tr_{\uK'/\uK}$ as a 
$\bar{k}_v\underset{\xi,\uk}{\otimes}\uK$-linear map from $\bar{k}_v\underset{\xi',\uk}{\otimes}\uK'$ to $\bar{k}_v\underset{\xi,\uk}{\otimes}\uK$ given by 
\[
c \otimes \ualpha' \longmapsto c \otimes  \tr_{\uK'/\uK}(\ualpha'), \quad \forall c \in \bar{k}_v,\ \ualpha' \in \uK',
\]
one checks that $\tr_{\uK'/\uK}(e_{\xi'}) = e_{\xi}$ for every $\xi' \in J_{\uK'}$ and $\xi = \xi'\big|_{\uK} \in J_{\uK}$.
As the generator $\omega^{\Inf_{\uK}^{\uK'}(M)}$ of $H_{\dR}\big(\Inf_{\uK}^{\uK'}(M),\bar{k}_v\big)$ comes from  $\omega^M$,
 we have
\[
(\tr_{\uK'/\uK})_{\dR}(\omega_{\xi'}^{\Inf_{\uK}^{\uK'}(M)}) = \tr_{\uK'/\uK}(e_{\xi'}) \cdot \omega^M = 
\omega_{\xi}^M \neq 0.
\]
\end{itemize}

\end{rem}

This enables us to establish our $v$-adic Chowla--Selberg formula in the next subsection.

\subsection{\texorpdfstring{$v$}{v}-adic Chowla--Selberg formula}\label{sec: v-CS}

Given $\unfk \in \uA_+$ with $\uv =\iota^{-1}(v) \nmid \unfk$, let 
$\uK_{\unfk}$ be the fraction field of $\uO_{\unfk}$.
Let 
$\uK$ be a subextension of $\uK_{\unfk}$ over $\uk$.
Then $\uK$ is a CM field over $\uk$, and the extension $\uK/\uk$ is geometric and abelian.
Let $M$ be a CM $t$-motive with CM type $(\uK,\Xi)$.
By Proposition~\ref{prop: CM}~(2), we have that $\Inf_{\uK}^{\uK_{\unfk}}(M)$ is a CM $t$-motive with CM type $(\uK_{\unfk},\Inf_{\uK}^{\uK_{\unfk}}(\Xi))$.
According to the recipe in the study of the ($\infty$-adic) Chowla--Selberg phenomenon over function fields in \cite[Section 6.3]{Wei26}, we can find $h,n \in \ZZ_{\geq 0}$ and $\boxx \in \Ascr^{\geo}_{\unfk}$ such that
\[
\Inf_{\uK}^{\uK_{\unfk}}(M^{\otimes_{\uK} h}(n)) \quad \text{ and } \quad M_{\boxx} \quad \text{ are $\uK_{\unfk}$-isogenous over $\bar{k}$.}
\]
Here $M^{\otimes_{\uK} h}(n)$ is the $n$-th Tate twist of
\begin{equation}\label{eqn: M-tensor-power}
M^{\otimes_{\uK} h} \assign \overbrace{M\otimes_{\uK} \cdots \otimes_{\uK} M}^{h \text{ times}}.
\end{equation}
An explicit way of choosing $h,n$ and $\boxx \in \Ascr^{\geo}_{\unfk}$ can be illustrated as follows:
for each Dirichlet character $\chi:(\uA/\unfk)^{\times}\rightarrow \CC^{\times}$, let $\ucfk_{\chi} \in \uA_+$ be the conductor of $\chi$, and let
\begin{align*}
L_{\uA}(s,\chi) & \assign \sum_{\subfrac{\uafk \in \uA_+}{\gcd(\uafk,\ucfk_{\chi})=1}}\frac{\chi(\uafk)}{q^{s\deg \uafk}}, \quad \forall \re(s)>1 \\
& = \sum_{\subfrac{\uafk \in \uA_+,\ \deg \uafk < \deg \ucfk_{\chi}}{\gcd(\uafk,\ucfk_{\chi})=1}}\frac{\chi(\uafk)}{q^{s\deg \uafk}}, \quad \text{ when $\chi$ is non-trivial (see \cite[Proposition 4.3]{Ros02}).}
\end{align*}
In particular, when $\chi$ is non-trivial, one has
\[
L_{\uA}(0,\chi) = \sum_{\subfrac{\uafk \in \uA_+,\, \deg \uafk < \deg \ucfk_{\chi}}{\gcd(\uafk,\ucfk_{\chi})=1}} \chi(\uafk).
\]

Note that the isomorphism $\xi_{\uone}: \uK_{\unfk}\stackrel{\sim}{\rightarrow} K_{\unfk}$ induces an isomorphism between $\Gal(\uK_{\unfk}/\uk)$ and $\Gal(K_{\unfk}/k)\cong (\uA/\unfk)^{\times}$ (by the Artin map \eqref{eqn: Artin-map}) so that for every $\ua \in (\uA/\unfk)^{\times}$, the corresponding $\varrho_{\ua} \in \Gal(\uK_{\unfk}/\uk)$ satisfies
\begin{equation}\label{eqn: ASrho}
\xi_{\uone}\circ \varrho_{\ua} = \varsigma_{\ua} \circ \xi_{\uone} = \xi_{\ua}.
\end{equation}
This allows us to identify $\Gal(\uK_{\unfk}/\uK)$ as a subgroup $H_{\uK}$ of $(\uA/\unfk)^{\times}$, and 
$G_{\uK} \assign \Gal(\uK/\uk)$ is regarded as the quotient of $(\uA/\unfk)^{\times}$ by $H_{\uK}$.
Moreover, every character $\chi$ on $G_{\uK}$ is viewed as a character on $(\uA/\unfk)^{\times}$ trivial on $H_{\uK}$. 
Let $\widehat{G}_{\uK}\assign \Hom(G_{\uK},\CC^{\times})$ be the Pontryagin dual of $G_{\uK}$.
For each $\varrho \in G_{\uK}$, $\ucfk \in \uA_+$ with $1\neq \ucfk \mid \unfk$ and $\ua \in (\uA/\unfk)^{\times}$, we put
\begin{equation}\label{eqn: n-c-rho-a}
n_{\ucfk}(\varrho,\ua)\assign \sum_{\subfrac{\chi \in \widehat{G}_{\uK}\setminus \widehat{G}_{\uK^+}}{\ucfk_{\chi} = \ucfk}}\frac{\chi(\varrho)\cdot \chi(\ua)}{L_{\uA}(0,\chi)} \quad \text{ which lies in $\QQ$,}
\end{equation}
and 
\begin{equation}\label{eqn: rcn}
h_{\uK/\uK^+}\assign \prod_{\chi \in \widehat{G}_{\uK}\setminus \widehat{G}_{\uK^+}} L_{\uA}(0,\chi) \quad \in \NN, \quad 
\text{the \emph{relative class number of $\uK/\uK^+$}.}
\end{equation}
Then $h_{\uK/\uK^+}\cdot n_{\ucfk}(\varrho,\ua) \in \ZZ$.
Now, write $J_{\uK}=\{\xi_{\varrho}\mid \varrho \in G_{\uK}\}$ where 
\begin{equation}\label{eqn: xi-rho}
\xi_{\varrho}\assign \xi_{\uone}\big|_{\uK}\circ \varrho: \uK \hookrightarrow K_{\unfk}\subset \bar{k} \quad \text{ for every $\varrho \in G_{\uK}$}.
\end{equation}
We obtain that
\begin{thm}\label{thm: isog-geoCM}
{\rm (See~\cite[Proposition 6.3.5]{Wei26})} Keep the above notation. For every generalized CM type $\Xi = \sum_{\varrho \in G_{\uK}}m_{\varrho} \cdot \xi_{\varrho}$ of $\uK$, we have
\begin{align*}
[\uK:\uk] \cdot h_{\uK/\uK^+} \cdot \Inf_{\uK}^{\uK_{\unfk}}(\Xi)
& = \big([\uK^+:\uk] \cdot h_{\uK/\uK^+} \cdot \wt(\Xi)\big)\cdot \Inf_{\uk}^{\uK_{\unfk}}(\xi_0) \\ 
& \quad + \sum_{\varrho \in G_{\uK}} \sum_{1\neq \ucfk \mid \unfk} \ \sum_{\subfrac{\ua \in \uA,\, \deg \ua < \deg \ucfk}{\gcd(\ua,\ucfk)=1}} \big(m_{\varrho} \cdot h_{\uK/\uK^+}\cdot n_{\ucfk}(\varrho,\ua)\big) \cdot \Xi_{a/\cfk} \quad \in I_{\uK_{\unfk}}.
\end{align*}
Here $\xi_0:\uk \cong k \subset \bar{k}$ is the CM type of $\uk$ mentioned in Example~\ref{ex: CM-t-mot}~(1), and $a/\cfk = \iota(\ua/\ucfk) \in k$.
\end{thm}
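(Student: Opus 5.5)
The identity lives in $I_{\uK_{\unfk}}$, so I would compare, for every $\ub\in(\uA/\unfk)^{\times}$, the coefficient of $\xi_{\ub}$ on both sides. Each side then becomes a $\QQ$-valued function on $(\uA/\unfk)^{\times}$, and the plan is to show that the two functions have the same Fourier coefficients against every character $\chi$ of $(\uA/\unfk)^{\times}$.

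\textbf{The three coefficient functions.} By Proposition~\ref{prop: CM}~(2) and \eqref{eqn: ASrho}, the coefficient of $\xi_{\ub}$ in $\Inf_{\uK}^{\uK_{\unfk}}(\Xi)$ is $m_{\bar{\ub}}$, where $\bar{\ub}$ is the image of $\varrho_{\ub}$ in $G_{\uK}$. In $\Inf_{\uk}^{\uK_{\unfk}}(\xi_0)$ the coefficient is $1$ for every $\ub$. By Example~\ref{ex: CM-t-mot}~(2), the coefficient of $\xi_{\ub}$ in $\Xi_{a/\cfk}$ is $\langle ba/\cfk\rangle_{\geo}$, with $b=\iota(\ub)$.

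\textbf{Key computation.} From the definition of $\langle\cdot\rangle_N$, $\langle f\rangle_{\geo}=1$ exactly when the fractional part $\lbangle f\rbangle$ has monic leading term. Hence, for $\cfk$ monic and $\gcd(b,\cfk)=1$, we get $\langle b/\cfk\rangle_{\geo}=1$ if and only if the reduced representative $b'$ of $b$ modulo $\cfk$ (with $\deg b'<\deg\cfk$) is monic. Consequently, for a character $\chi$ of conductor $\ucfk_\chi=\ucfk$,
\[
\sum_{\ub\in(\uA/\ucfk)^{\times}}\chi(\ub)\,\langle b/\cfk\rangle_{\geo}=L_{\uA}(0,\chi).
\]

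\textbf{Comparing Fourier coefficients.} I would split the characters $\chi$ of $(\uA/\unfk)^{\times}$ into four cases.
\begin{itemize}
\item[(i)] $\chi$ nontrivial on $H_{\uK}$. The left side and the $\xi_0$ term are inflated from $\uK$, so their coefficients vanish. The right-hand gamma term is $\sum_{\ua}n_{\ucfk}(\varrho,\ua)\,\Xi_{a/\cfk}$, and $n_{\ucfk}(\varrho,\cdot)$ only involves characters of $G_{\uK}$. By orthogonality its $\chi$-coefficient also vanishes.
\item[(ii)] $\chi$ trivial. This amounts to counting total multiplicities. The left side gives $[\uK:\uk]h_{\uK/\uK^+}\cdot\#H_{\uK}\sum_\varrho m_\varrho$, with $\sum_\varrho m_\varrho=\wt(\Xi)[\uK^+:\uk]$. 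The Tate-twist term supplies exactly this amount. The gamma terms contribute zero, because $\sum_\varrho m_\varrho n_{\ucfk}(\varrho,\ua)$ is paired only with nontrivial characters.
\item[(iii)] $\chi\in\widehat{G}_{\uK^+}$ nontrivial, i.e.\ $\chi$ trivial on the image of $\FF_q^{\times}$ (the inertia/decomposition at $\uinfty$). The generalized CM type condition says $\sum_{\varrho\in\varrho_0\Gal(\uK/\uK^+)}m_\varrho=\wt(\Xi)$ for every coset. Hence $\sum_\varrho m_\varrho\chi(\varrho)=\wt(\Xi)\,[\uK:\uK^+]\sum_{G_{\uK^+}}\chi=0$. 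On the right, the $\xi_0$ term vanishes. For the gamma terms, the reflection identity \eqref{eqn: geo-bra-ref} makes $\sum_{\epsilon}\langle\epsilon f\rangle_{\geo}$ constant, so their $\chi$-coefficient is $0$.
\item[(iv)] $\chi\in\widehat{G}_{\uK}\setminus\widehat{G}_{\uK^+}$, with conductor $\ucfk_\chi$. Here I substitute the definition \eqref{eqn: n-c-rho-a} and use the key computation. Only characters of conductor $\ucfk_\chi$ survive, and the factor $L_{\uA}(0,\chi)$ cancels the denominator in $n_{\ucfk_\chi}$. This leaves $[\uK:\uk]h_{\uK/\uK^+}\sum_\varrho m_\varrho\chi(\varrho)$ (up to the common normalization $\#H_{\uK}$), which matches the left side after applying orthogonality on $G_{\uK}$.
\end{itemize}
The terms in the right-hand sum come with $\ucfk\neq1$ and $\deg\ua<\deg\ucfk$, i.e.\ one representative per class of $(\uA/\ucfk)^{\times}$. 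This is consistent with the Fourier inversion being done modulo $\ucfk$.

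\textbf{Expected obstacle.} The main difficulty is the bookkeeping in case (iv). The sums are over $\ua$ modulo $\ucfk$ while $\ub$ runs modulo $\unfk$, so I will need to lift the identity through the projection $(\uA/\unfk)^{\times}\to(\uA/\ucfk)^{\times}$. I also need to track the conjugation: whether $\chi(\ua)$ or $\bar\chi(\ua)$ appears, and whether $L_{\uA}(0,\chi)$ or $L_{\uA}(0,\bar\chi)$ appears. This fixes the normalization of $n_{\ucfk}(\varrho,\ua)$ and the resulting multiplicities $\#H_{\uK}$. If a sign or conjugation mismatch appears, I would correct it by replacing $\varrho$ with $\varrho^{-1}$ in the indexing, as in \cite[Proposition 6.3.5]{Wei26}.
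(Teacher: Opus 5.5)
Your argument is correct. The paper does not prove this statement; it imports it verbatim from \cite[Proposition 6.3.5]{Wei26}. So your comparison of Fourier coefficients over $(\uA/\unfk)^{\times}$, case by case, is a self-contained verification rather than a reconstruction of an argument given here.

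The conjugation bookkeeping you were worried about works out exactly as stated, with no re-indexing needed. Replacing $\varrho$ by $\varrho^{-1}$ would not be a legitimate fix anyway, since it changes the statement. For $\chi$ of conductor $\ucfk$ and $\ub$ prime to $\ucfk$, the substitution $\ua\mapsto\ub^{-1}\ua$ gives $\sum_{\ua}\chi(\ua)\langle ba/\cfk\rangle_{\geo}=\bar\chi(\ub)\,L_{\uA}(0,\chi)$. Hence the coefficient of $\xi_{\ub}$ in the gamma term is
\[
h_{\uK/\uK^+}\sum_{\chi\in\widehat{G}_{\uK}\setminus\widehat{G}_{\uK^+}}\bar\chi(\ub)\sum_{\varrho\in G_{\uK}}m_{\varrho}\,\chi(\varrho).
\]
This is precisely the contribution of the characters outside $\widehat{G}_{\uK^+}$ to the Fourier inversion
\[
[\uK:\uk]\,m_{\varrho'}=\sum_{\chi\in\widehat{G}_{\uK}}\bar\chi(\varrho')\sum_{\varrho}m_{\varrho}\,\chi(\varrho).
\]
In that inversion, the trivial character contributes $\wt(\Xi)[\uK^+:\uk]$ and the other characters in $\widehat{G}_{\uK^+}$ contribute $0$, matching your cases (ii) and (iii). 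Having this explicit form also makes cases (i) and (iii) immediate, so you do not need the separate convolution and reflection arguments there.

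One small slip: in (iii) the correct identity is $\sum_{\varrho}m_{\varrho}\chi(\varrho)=\wt(\Xi)\sum_{G_{\uK^+}}\chi$, without the factor $[\uK:\uK^+]$. This is harmless, since the sum vanishes either way.
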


For each $\ucfk \in \uA_+$ with $1\neq \ucfk \mid \unfk$, take a sufficiently large $s_{\ucfk} \in \ZZ_{\geq 0}$ so that 
\begin{equation}\label{eqn: m-c}
n_{\ucfk}'(\varrho,\ua)\assign s_{\ucfk}+ m_{\varrho} \cdot h_{\uK/\uK^+}\cdot n_{\ucfk}(\varrho,\ua) \geq 0 \quad \text{ for every $\varrho \in G_{\uK}$, and $\ua \in (\uA/\unfk)^{\times}$}
\end{equation}
and
\begin{equation}\label{eqn: CM-type-n}
n\assign \sum_{1\neq \ucfk\mid \unfk} \frac{\#(\uA/\ucfk)^{\times}}{q-1} \cdot s_{\ucfk} - [\uK^+:\uk]\cdot h_{\uK/\uK^+}\cdot \wt(\Xi) \geq 0.
\end{equation}
Put 
\begin{equation}\label{eqn: h'-x-xi-ell}
h\assign [\uK:\uk]\cdot h_{\uK/\uK^+}
\quad \text{ and } \quad 
\boxx_{\Xi,n} \assign \sum_{\varrho \in G_{\uK}}  \sum_{1\neq \ucfk \mid \unfk} \sum_{\subfrac{\ua \in \uA,\, \deg \ua < \deg \ucfk}{\gcd(\ua,\ucfk)=1}} n_{\ucfk}'(\varrho,\ua)[\frac{a}{\cfk}] \quad \in \Ascr^{\geo}_{\unfk}.
\end{equation}
Then the isogeny theorem of CM $t$-motives implies that:
\begin{prop}\label{prop: isog}
\[
\Inf_{\uK}^{\uK_{\unfk}}(M^{\otimes_{\uK}h}(n)) \quad \text{ and } \quad M_{\boxx_{\Xi,n}} 
\quad \text{ are $\uK_{\unfk}$-isogenous over $\bar{k}$.}
\]
\end{prop}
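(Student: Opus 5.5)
The plan is to reduce Proposition~\ref{prop: isog} to the isogeny theorem, Theorem~\ref{thm: isog-thm}. Both sides are CM $t$-motives for the field $\uK_{\unfk}$, so it suffices to check that they have the same generalized CM type in $I_{\uK_{\unfk}}$. The only substantive input is the identity of CM types in Theorem~\ref{thm: isog-geoCM}, adjusted by the auxiliary integers $s_{\ucfk}$ and $n$.

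\emph{Step 1 (CM type of the left-hand side).} By Proposition~\ref{prop: CM}~(1), $M^{\otimes_{\uK}h}$ is a CM $t$-motive with CM type $(\uK,h\Xi)$. The Carlitz $t$-motive $M_C$ has CM type $(\uk,\xi_0)$, and $M^{\otimes_{\uK}h}(n)=M^{\otimes_{\uK}h}\otimes M_C^{\otimes n}$.

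To handle the tensor product over the smaller field $\uk$, identify $M^{\otimes_{\uK}h}\otimes M_C$ with $M^{\otimes_{\uK}h}\otimes_{\uK}\Inf_{\uk}^{\uK}(M_C)$. Both are $M^{\otimes_{\uK}h}\otimes_{\bar{k}[t]}\bar{k}[t]$ with the same $\tau$-action, since $\Inf_{\uk}^{\uK}(M_C)=\uO_{\uK}\otimes_{\uA}M_C$. Proposition~\ref{prop: CM}~(1) and (2) then give CM type $(\uK,h\Xi+n\Inf_{\uk}^{\uK}(\xi_0))$. Inflating once more by Proposition~\ref{prop: CM}~(2), and using transitivity of $\Inf$ on CM types (immediate from the formula for $\Inf$), the left-hand side has CM type
\[
\bigl(\uK_{\unfk},\ h\Inf_{\uK}^{\uK_{\unfk}}(\Xi)+n\Inf_{\uk}^{\uK_{\unfk}}(\xi_0)\bigr).
\]

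\emph{Step 2 (CM type of the right-hand side).} By Example~\ref{ex: CM-t-mot}~(2), $M_{\boxx_{\Xi,n}}$ has CM type $\Xi_{\boxx_{\Xi,n}}=\sum_{\varrho,\ucfk,\ua}n'_{\ucfk}(\varrho,\ua)\Xi_{a/\cfk}$. This uses that $\boxx_{\Xi,n}$ is effective with positive weight, which is guaranteed by \eqref{eqn: m-c} and, for $\Xi\ne0$, by the choice of $s_{\ucfk}$. Substituting $n'_{\ucfk}=s_{\ucfk}+m_\varrho h_{\uK/\uK^+}n_{\ucfk}(\varrho,\ua)$ splits $\Xi_{\boxx_{\Xi,n}}$ into two pieces.

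The $m_\varrho$-piece equals, by Theorem~\ref{thm: isog-geoCM}, $h\Inf_{\uK}^{\uK_{\unfk}}(\Xi)-[\uK^+:\uk]h_{\uK/\uK^+}\wt(\Xi)\Inf_{\uk}^{\uK_{\unfk}}(\xi_0)$.

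The $s_{\ucfk}$-piece is $\sum_{\varrho}\sum_{\ucfk}s_{\ucfk}\sum_{\ua}\Xi_{a/\cfk}$. I must show that, for fixed $\ucfk$, $\sum_{\ua}\Xi_{a/\cfk}=\frac{\#(\uA/\ucfk)^\times}{q-1}\Inf_{\uk}^{\uK_{\unfk}}(\xi_0)$. The coefficient of $\xi_{\ub}$ in this sum is $\sum_{\ua\in(\uA/\ucfk)^\times}\langle ba/\cfk\rangle_{\geo}$. As $\ua$ runs over $(\uA/\ucfk)^\times$, so does $\ub\ua$. Grouping the units into $\FF_q^\times$-orbits and applying \eqref{eqn: geo-bra-ref}, the coefficient equals $\#(\uA/\ucfk)^\times/(q-1)$, independently of $\ub$. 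This is exactly the coefficient of $\Inf_{\uk}^{\uK_{\unfk}}(\xi_0)=\sum_{\ub}\xi_{\ub}$.

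Combining the two pieces with the definition \eqref{eqn: CM-type-n} of $n$ recovers the CM type from Step 1. The sum over $\varrho$ in the $s_{\ucfk}$-piece contributes a factor $\#G_{\uK}$. I will track this against how $\boxx_{\Xi,n}$ and $n$ are normalized, and absorb the factor into $s_{\ucfk}$ if necessary. This bookkeeping is where I expect the main care is needed.

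\emph{Step 3.} Theorem~\ref{thm: isog-thm} now gives a $\uK_{\unfk}$-isogeny over $\bar{k}$, which proves the proposition.

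The main obstacle is the tensor/inflation compatibility in Step 1, namely that the Tate twist over $\uk$ matches the $\uK$-tensor product with the inflated Carlitz motive. The counting identity in Step 2 is the other point requiring care.
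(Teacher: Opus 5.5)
Your strategy is the paper's. You compute both CM types in $I_{\uK_{\unfk}}$, rewrite the $m_\varrho$-part via Theorem~\ref{thm: isog-geoCM}, convert the $s_{\ucfk}$-part into multiples of $\Inf_{\uk}^{\uK_{\unfk}}(\xi_0)$ using \eqref{eqn: geo-bra-ref}, and finish with Theorem~\ref{thm: isog-thm}. Your orbit count giving $\sum_{\ua}\Xi_{a/\cfk}=\frac{\#(\uA/\ucfk)^{\times}}{q-1}\Inf_{\uk}^{\uK_{\unfk}}(\xi_0)$ is correct. So is your identification $M^{\otimes_{\uK}h}\otimes M_C\cong M^{\otimes_{\uK}h}\otimes_{\uK}\Inf_{\uk}^{\uK}(M_C)$, which fills in a step the paper takes for granted.

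The factor $\#G_{\uK}$ you postponed is a genuine mismatch, not bookkeeping that can be absorbed. Your sentence that combining the two pieces with \eqref{eqn: CM-type-n} recovers the type from Step~1 is false as written. Every $\varrho\in G_{\uK}$ contributes $s_{\ucfk}[a/\cfk]$ to $\boxx_{\Xi,n}$, so
\[
\Xi_{\boxx_{\Xi,n}}-\bigl(h\Inf_{\uK}^{\uK_{\unfk}}(\Xi)+n\Inf_{\uk}^{\uK_{\unfk}}(\xi_0)\bigr)
=\bigl([\uK:\uk]-1\bigr)\Bigl(\sum_{1\neq\ucfk\mid\unfk}\frac{\#(\uA/\ucfk)^{\times}}{q-1}\,s_{\ucfk}\Bigr)\cdot\Inf_{\uk}^{\uK_{\unfk}}(\xi_0).
\]
This is nonzero as soon as $[\uK:\uk]>1$ and $\wt(\Xi)>0$, because then $n\geq 0$ forces some $s_{\ucfk}>0$. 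In that case the two motives even have different $\dim_{\bar{k}}(M/\tau M)$, so they cannot be isogenous.

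Re-choosing the $s_{\ucfk}$ does not help. They enter $n$ and $\boxx_{\Xi,n}$ linearly, so the ratio $[\uK:\uk]$ persists. What is needed is a change of the relative normalization, for example
\[
n\assign [\uK:\uk]\sum_{1\neq\ucfk\mid\unfk}\frac{\#(\uA/\ucfk)^{\times}}{q-1}\,s_{\ucfk}-[\uK^+:\uk]\cdot h_{\uK/\uK^+}\cdot\wt(\Xi).
\]
Alternatively, include the $s_{\ucfk}$-term in $\boxx_{\Xi,n}$ only once rather than once per $\varrho$, and require effectivity of the aggregated coefficients. With either correction your Steps 1--3 go through.

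The paper's own displayed computation makes the same slip, in passing from its second line to its third. The corrected $n$ is also the one the proof of Theorem~\ref{thm: v-CS} actually needs. There, the $s_{\ucfk}$-part of $\boxx_{\Xi,n}$ contributes $v^{-[\uK:\uk]\sum_{\ucfk}\frac{\#(\uA/\ucfk)^{\times}}{q-1}s_{\ucfk}}$ (up to roots of unity) through the reflection formula, and this must cancel against $v^{n}$ to leave the stated $v^{-h_{\uK/\uK^+}[\uK^+:\uk]\wt(\Xi)}$.
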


\begin{proof}
Observe that for each $x \in \nfk^{-1}A\setminus A$, by \eqref{eqn: geo-bra-ref} we have
\[
\sum_{\epsilon \in \FF_q^{\times}} \Xi_{\epsilon x}
= \sum_{\ua \in (\uA/\unfk)^{\times}} \xi_{\ua} = \Inf_{\uk}^{\uK_{\unfk}}(\xi_0).
\]
Thus the CM type of
$\Inf_{\uK}^{\uK_{\unfk}}(M^{\otimes_{\uK}h}(n))$
is equal to
\begin{align*}
\Inf_{\uK}^{\uK_{\unfk}}\big(h\cdot \Xi+ n \cdot \Inf_{\uk}^{\uK}(\xi_0)\big)
& = [\uK:\uk]\cdot h_{\uK/\uK^+}\cdot \Inf_{\uK}^{\uK_{\unfk}}(\Xi) +  n \cdot \Inf_{\uk}^{\uK_{\unfk}}(\xi_0) \\
& = 
\Big(\sum_{1\neq \ucfk\mid \unfk} \frac{\#(\uA/\ucfk)^{\times}}{q-1} \cdot s_{\ucfk}\Big)\cdot \Inf_{\uk}^{\uK_{\unfk}}(\xi_0) \\
& \quad + 
\sum_{\varrho \in G_{\uK}} \sum_{1\neq \ucfk \mid \unfk} \ \sum_{\subfrac{\ua \in \uA,\, \deg \ua < \deg \ucfk}{\gcd(\ua,\ucfk)=1}} \big(m_{\varrho} \cdot h_{\uK/\uK^+}\cdot n_{\ucfk}(\varrho,\ua)\big) \cdot \Xi_{a/\cfk} \\
& = 
\sum_{\varrho \in G_{\uK}} \sum_{1\neq \ucfk \mid \unfk} \ \sum_{\subfrac{\ua \in \uA,\, \deg \ua < \deg \ucfk}{\gcd(\ua,\ucfk)=1}} \big(s_{\ucfk}+ m_{\varrho} \cdot h_{\uK/\uK^+}\cdot n_{\ucfk}(\varrho,\ua)\big) \cdot \Xi_{a/\cfk} \\
&= \Xi_{\boxx_{\Xi,n}},
\end{align*}
which is the CM type of $M_{\boxx_{\Xi,n}}$.
Therefore the result follows from Theorem~\ref{thm: isog-thm}. 
\end{proof}

Recall in Remark~\ref{rem: dR-decomp}~(3) that the trace map $\tr_{\uK_{\unfk}/\uK}:\uK_{\unfk}\rightarrow \uK$ induces a ${\uK}$-morphism
\[
\tr_{\uK_{\unfk}/\uK}:\Inf_{\uK}^{\uK_{\unfk}}(M^{\otimes_{\uK}h}(n)) = \uO_{\unfk}\otimes_{\uO_{\uK}} M^{\otimes_{\uK}h}(n) \longrightarrow  M^{\otimes_{\uK}h}(n)
\]
given by
\[
\tr_{\uK_{\unfk}/\uK}(\ualpha\otimes m) = \tr_{\uK_{\unfk}/\uK}(\ualpha) \cdot m
= \Big(\sum_{\varrho \in H_{\uK}}\varrho(\ualpha)\Big)\cdot m, \quad \forall \ualpha \in \uO_{\unfk} \ \text{ and }\ m \in M^{\otimes_{\uK}h}(n).
\]
Let $f: M_{\boxx_{\Xi,n}}\rightarrow \Inf^{\uK_{\unfk}}_{\uK}(M^{\otimes_{\uK}h}(n))$ be a $\uK_{\unfk}$-isogeny over $\bar{k}$ given in Proposition~\ref{prop: isog}.
For each $\varrho_0 \in G_{\uK}$, take $\ua_0 \in (\uA/\unfk)^{\times}$ so that $\varrho_{\ua_0}\big|_{\uK} = \varrho_0$, which implies that 
\[
\xi_{\ua_0}\big|_{\uK} = \xi_{\uone}\circ \varrho_{\ua_0}\big|_{\uK} = \xi_{\uone}\big|_{\uK}\circ \varrho_0 =  \xi_{\varrho_0} \in J_{\uK}.
\]
Then the differential $\omega_{\varrho_0}'\assign (\tr_{\uK_{\unfk}/\uK}\circ f)_{\dR}(\omega^{\boxx_{\Xi,n}}_{\ua_0}) \in H_{\dR}\big(M^{\otimes_{\uK}h}(n),\bar{k}\big)$, which only depends on $\varrho_0 \in G_{\uK}$, is nonzero and satisfies
\[
\ualpha_{\dR}(\omega_{\varrho_0}') = \xi_{\varrho_0}(\ualpha)\cdot \omega_{\varrho_0}', \quad \forall \ualpha \in \uO_{\uK}.
\]
From the decomposition of the de~Rham spaces of CM $t$-motives in Remark~\ref{rem: dR-decomp}~(1) and (2), 
there must exist a differential $\omega^{M}_{\varrho_0} \in H_{\dR}(M,\bar{k})$, which is unique up to a mutliple of roots of unity, such that $\ualpha_{\dR}(\omega^{M}_{\varrho_0}) = \xi_{\varrho_0}(\ualpha) \cdot \omega^{M}_{\varrho_0}$ for every $\ualpha \in \uO_{\uK}$ and 
\[
\omega_{\varrho_0}' =(\omega^{M}_{\varrho_0})^{\otimes_{\uK} h} \otimes \omega_C^{\otimes n} \quad \in H_{\dR}\big(M^{\otimes_{\uK}h}(n),\bar{k}\big).
\]
Here $\omega_C \in H_{\dR}(M_C,k)$ is given in Example~\ref{sec: Ex-Car-t}.
By Theorem~\ref{thm: period-vgamma-boxx} and the functoriality of the crystalline action, we arrive at the following $v$-adic Chowla--Selberg formula:

\begin{thm}\label{thm: v-CS}
Keep the above notation.
Then there exists $\zeta_{\varrho_0} \in \overline{\FF}_q^{\times}$ such that for each $\sigma_v \in \Wcal_v$ with $\deg \sigma_v = 1$, we have
\[
\sigma_{v,\cris}(\omega^{M}_{\varrho_0}) = 
\zeta_{\varrho_0} \cdot v^{\frac{\wt(\Xi)}{[\uK:\uK^+]}} \cdot \prod_{\varrho \in G_{\uK}} \left(
\prod_{1\neq \ucfk \mid \unfk} \ \prod_{\subfrac{\ua \in \uA,\, \deg \ua < \deg \ucfk}{\gcd(\ua,\ucfk)=1}}\Gamma_v^{\geo,*}\Big\lbangle \frac{va}{\ucfk}\Big\rbangle^{-n_{\ucfk}(\varrho,\ua)}
\right)^{\frac{m_{\varrho\varrho_0}}{[\uK:\uk]}} \cdot \omega^{M}_{\varrho_{\uv}\varrho_0}.
\]
Here $\varrho_{\uv}\varrho_0\assign \varrho_{\uv}\big|_{\uK}\circ \varrho_0 \in G_{\uK}$.
\end{thm}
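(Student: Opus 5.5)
We apply $\sigma_{v,\cris}$ to the identity $\omega'_{\varrho_0}=(\tr_{\uK_{\unfk}/\uK}\circ f)_{\dR}(\omega^{\boxx_{\Xi,n}}_{\ua_0})=(\omega^M_{\varrho_0})^{\otimes_{\uK}h}\otimes\omega_C^{\otimes n}$ and compute it in two ways. The morphism $\tr_{\uK_{\unfk}/\uK}\circ f$ is defined over $\bar{k}$, and every CM $t$-motive has good reduction. Hence Corollary~\ref{cor: isog-cor} and the equivariance \eqref{eqn: equivariance} allow us to move $\sigma_{v,\cris}$ across this morphism.

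On the soliton side we use Theorem~\ref{thm: period-vgamma-boxx}. It gives
\[
\sigma_{v,\cris}(\omega^{\boxx_{\Xi,n}}_{\ua_0})=\Gamma_v^{\geo,*}\lbangle va_0*\boxx_{\Xi,n}\rbangle^{-1}\,\omega^{\boxx_{\Xi,n}}_{\uv\ua_0}.
\]
Applying $(\tr\circ f)_{\dR}$ therefore yields $\Gamma_v^{\geo,*}\lbangle va_0*\boxx_{\Xi,n}\rbangle^{-1}\,\omega'_{\varrho_{\uv}\varrho_0}$, because $\varrho_{\uv\ua_0}|_{\uK}=\varrho_{\uv}|_{\uK}\circ\varrho_0$.

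On the other side, the tensor compatibility of $\sigma_{v,\cris}$ and Example~\ref{sec: Ex-Car-t} give
\[
\sigma_{v,\cris}(\omega'_{\varrho_0})=\sigma_{v,\cris}(\omega^M_{\varrho_0})^{\otimes h}\otimes v^n\omega_C^{\otimes n}.
\]
Since $\sigma_{v,\cris}$ sends the $\xi_{\varrho_0}$-eigenline to the $\xi_{\varrho_{\uv}\varrho_0}$-eigenline (this follows from equivariance under the $\uO_{\uK}$-endomorphisms together with the fact that $\sigma_v$ acts on $\xi_{\varrho}(\ualpha)\in K_{\unfk}$ through the Artin symbol $\varrho_{\uv}$), we may write $\sigma_{v,\cris}(\omega^M_{\varrho_0})=c_{\varrho_0}\,\omega^M_{\varrho_{\uv}\varrho_0}$. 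Comparing the two computations then gives
\[
c_{\varrho_0}^h\,v^n=\Gamma_v^{\geo,*}\lbangle va_0*\boxx_{\Xi,n}\rbangle^{-1}.
\]

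The main computation is to expand $\Gamma_v^{\geo,*}\lbangle va_0*\boxx_{\Xi,n}\rbangle$. It splits as the product of an $s_{\ucfk}$-part and an $m_\varrho h_{\uK/\uK^+}n_{\ucfk}(\varrho,\ua)$-part.

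For the $s_{\ucfk}$-part, the factor $\prod_{\ua\in(\uA/\ucfk)^\times}\Gamma_v^{\geo,*}\lbangle va_0a/\cfk\rbangle$ groups into $\FF_q^\times$-orbits. By the reflection formula \eqref{eqn: new-reflection}, each orbit product equals $\zeta\,v^{-1}\lbangle x\rbangle/\partial_v\lbangle x\rbangle$. I expect that multiplying over orbits makes the ratios $\lbangle x\rbangle/\partial_v\lbangle x\rbangle$ telescope to a root of unity, since $x\mapsto vx$ permutes the orbits and $\partial_v\lbangle x\rbangle$ should equal $\lbangle x/v\rbangle$ up to sign. This leaves $v^{-\#(\uA/\ucfk)^\times s_{\ucfk}/(q-1)}$, and these powers combine with $v^n$ to give $v^{-[\uK^+:\uk]h_{\uK/\uK^+}\wt(\Xi)}$ by \eqref{eqn: CM-type-n}. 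This bookkeeping is the step I expect to be the main obstacle. If the telescoping fails as stated, I would fall back on Theorem~\ref{thm: G-K-T-formula-geo}, which shows the orbit products are algebraic, and then argue that they must be roots of unity times powers of $v$ by comparing valuations.

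For the $m_\varrho$-part, we substitute $\ua\mapsto\ua_0\ua$ and use the character identity $n_{\ucfk}(\varrho,\ua_0^{-1}\ua)=n_{\ucfk}(\varrho\varrho_0,\ua)$. Reindexing $\varrho$ then turns $m_\varrho$ into $m_{\varrho\varrho_0}$ (with the paper's indexing convention). Finally we take the $h$-th root with $h=[\uK:\uk]h_{\uK/\uK^+}$. The exponent of $v$ becomes $[\uK^+:\uk]\wt(\Xi)/[\uK:\uk]=\wt(\Xi)/[\uK:\uK^+]$, and the $\Gamma$-exponents become $-n_{\ucfk}(\varrho,\ua)m_{\varrho\varrho_0}/[\uK:\uk]$. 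The ambiguity in the $h$-th root, together with the roots of unity left over from reflection, is absorbed into $\zeta_{\varrho_0}\in\overline{\FF}_q^\times$; this is consistent because $\omega^M_{\varrho_0}$ is itself only determined up to roots of unity.
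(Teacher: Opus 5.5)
Your argument is the paper's proof, step for step. You push $\sigma_{v,\cris}$ through $(\tr_{\uK_{\unfk}/\uK}\circ f)_{\dR}$ and evaluate the soliton side by Theorem~\ref{thm: period-vgamma-boxx}. You then use $\sigma_{v,\cris}(\omega_C)=v\,\omega_C$ and tensor compatibility to get $c^h v^n=\Gamma_v^{\geo,*}\lbangle va_0*\boxx_{\Xi,n}\rbangle^{-1}$. The $s_{\ucfk}$-part is cancelled against $v^n$ by reflection, and you finish by reindexing and taking $h$-th roots. Your explanation of why $\sigma_{v,\cris}$ sends the $\xi_{\varrho_0}$-eigenline to the $\xi_{\varrho_{\uv}\varrho_0}$-eigenline fills in a point the paper only asserts. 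That explanation is that $\sigma_{v,\cris}$ commutes with $\ualpha_{\dR}$ and that $\sigma_v|_{K_{\unfk}}=\varsigma_{\uv}$.

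The step you flag as the main obstacle is the one the paper dispatches in one line, asserting $\prod_{\ua}\Gamma_v^{\geo,*}\lbangle a/\cfk\rbangle=\zeta'\,v^{-\#(A/\cfk)^{\times}/(q-1)}$. Your telescoping mechanism is the right way to see it, but the identity you offer for it is false.

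\begin{itemize}
\item For primitive $x=a/\cfk$ with $v\nmid a$, $\lbangle x/v\rbangle=a/(v\cfk)$ has exact denominator $v\cfk$, so it cannot equal $\pm\partial_v x\in\cfk^{-1}A$.
\item What is true is that $\partial_v x=(a-\cfk x_\circ)/(v\cfk)$. Here $v\mid a-\cfk x_\circ$ and $\deg(a-\cfk x_\circ)<\deg\cfk+d$.
\item Hence $\partial_v x$ is again a primitive element of $\cfk^{-1}A$ with $|\partial_v x|_\infty<1$. Since $v\,\partial_v x=x-x_\circ$, you get $\lbangle v\,\partial_v x\rbangle=x$.
\item So $\partial_v$ is the inverse of the permutation $y\mapsto\lbangle vy\rbangle$ of primitive fractional elements. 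It commutes with $\FF_q^{\times}$, so it permutes the $\FF_q^{\times}$-orbits, and $\prod_O x_O/\partial_v x_O\in\FF_q^{\times}$.
\item The ``otherwise'' case of \eqref{eqn: new-reflection} fits the same formula. If $v\mid a$, then $x_\circ=0$ and $v^{-1}x/\partial_v x=1$, so your uniform orbit formula $\zeta\,v^{-1}x/\partial_v x$ is correct.
\end{itemize}

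Your fallback through Theorem~\ref{thm: G-K-T-formula-geo} should be dropped. The orbit products already lie in $k^{\times}$, so algebraicity gives nothing. Individually they are in general not a root of unity times a power of $v$, so no orbit-by-orbit valuation argument can work; only the full product has that form.

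There is also a small index slip. Because $\chi(\ua_0)=\chi(\varrho_0)$, one has $n_{\ucfk}(\varrho,\ua_0^{-1}\ua)=n_{\ucfk}(\varrho\varrho_0^{-1},\ua)$. The substitution $\varrho\mapsto\varrho\varrho_0$ then produces $m_{\varrho\varrho_0}$. With your version $n_{\ucfk}(\varrho\varrho_0,\ua)$ you would land on $m_{\varrho\varrho_0^{-1}}$.
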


\begin{proof}
From the functoriality of the crystalline action, by Theorem~\ref{thm: period-vgamma-boxx} we have that
\begin{align*}
\sigma_{v,\cris}(\omega'_{\varrho_0})
& = (\tr_{\uK_{\unfk}/\uK}\circ f)_{\dR}\big(\sigma_{v,\cris}(\omega^{\boxx_{\Xi,n}}_{\ua_0})\big) \\
& = (\tr_{\uK_{\unfk}/\uK}\circ f)_{\dR}\big(\Gamma_v^{\geo,*}\lbangle va_0* \boxx_{\Xi,n}\rbangle^{-1} \cdot 
\omega^{\boxx_{\Xi,n}}_{\uv\ua_0}\big) \\
&= 
\Gamma_v^{\geo,*}\lbangle va_0* \boxx_{\Xi,n}\rbangle^{-1} \cdot \omega'_{\varrho_{\uv}\varrho_{0}}.
\end{align*}
Since $\omega'_{\varrho} = (\omega_{\varrho_0}^M)^{\otimes_{\uK}h} \otimes \omega_C^{\otimes n}$ and $\sigma_{v,\cris}(\omega_C) = v \cdot \omega_C$ by Example~\ref{sec: Ex-Car-t}, there exists $c \in \bar{k}_v^{\times}$ such that 
\[
\sigma_{v,\cris}(\omega^M_{\varrho_0})
=c \cdot \omega^M_{\varrho_{\uv}\varrho_0} 
\quad \text{ and } \quad 
c^{h} = \Big(v^{n}\cdot \Gamma_{v}^{\geo,*}\lbangle va_0*\boxx_{\Xi,n}\rbangle\Big)^{-1}.
\]
Finally, from the reflection formula in \eqref{eqn: new-reflection}, we get that for every $\ucfk \in \uA_+$ with $1\neq \ucfk \mid \unfk$, there exists $\zeta' \in \overline{\FF}_q^{\times}$ such that
\[
\prod_{\subfrac{\ua \in \uA,\, \deg \ua <\deg \ucfk}{\gcd(\ua,\ucfk)=1}}\Gamma_v^{\geo,*}\Big\lbangle\frac{a}{\cfk}\Big\rbangle = \zeta'\cdot v^{-\frac{\#(A/\cfk)^{\times}}{q-1}}.
\]
Hence there exists $\zeta'' \in \overline{\FF}_q^{\times}$ so that 
\begin{align*}
v^{n}\cdot \Gamma_{v}^{\geo,*}\lbangle va_0*\boxx_{\Xi,n}\rbangle
& = \zeta''\cdot v^{-h_{\uK/\uK^+} \cdot  [\uK^+:\uk]\cdot \wt(\Xi)} \\
& \quad \ \cdot 
\prod_{\varrho \in G_{\uK}} \prod_{1\neq \ucfk \mid \unfk} \ \prod_{\subfrac{\ua \in \uA,\, \deg \ua < \deg \ucfk}{\gcd(\ua,\ucfk)=1}}\Gamma_{v}^{\geo,*}\Big\lbangle \frac{va_0a}{\cfk}\Big\rbangle^{ h_{\uK/\uK^+}\cdot m_{\varrho} \cdot n_{\ucfk}(\varrho,\ua)} \\
&= \zeta''\cdot 
v^{-h_{\uK/\uK^+} \cdot [\uK^+:\uk]\cdot  \wt(\Xi)} \\
& \quad \ \cdot 
\prod_{\varrho \in G_{\uK}} \prod_{1\neq \ucfk \mid \unfk} \ \prod_{\subfrac{\ua \in \uA,\, \deg \ua < \deg \ucfk}{\gcd(\ua,\ucfk)=1}}\Gamma_{v}^{\geo,*}\Big\lbangle \frac{va}{\cfk}\Big\rbangle^{h_{\uK/\uK^+}\cdot m_{\varrho\varrho_0} \cdot  n_{\ucfk}(\varrho,\ua)}.
\end{align*}
Therefore, by taking $h$-th roots on both sides (where $h = h_{\uK/\uK^+}\cdot[\uK:\uk]$), we see that there exists $\zeta_{\varrho_0} \in \overline{\FF}_q^{\times}$ such that
\[
c = \zeta_{\varrho_0} \cdot 
v^{\frac{\wt(\Xi)}{[\uK:\uK^+]}} \cdot \prod_{\varrho \in G_{\uK}} \left(
\prod_{1\neq \ucfk \mid \unfk} \ \prod_{\subfrac{\ua \in \uA,\, \deg \ua < \deg \ucfk}{\gcd(\ua,\ucfk)=1}}\Gamma_v^{\geo,*}\Big\lbangle \frac{va}{\ucfk}\Big\rbangle^{-n_{\ucfk}(\varrho,\ua)}
\right)^{\frac{m_{\varrho\varrho_0}}{[\uK:\uk]}}
\]
as desired.
\end{proof}

\begin{rem}\label{rem: CM-quad}
As every infinite place of the maximal totally real subfield $\uK_{\unfk}^+$ (i.e.~the place lying above $\uinfty$)
is totally ramified in $\uK_{\unfk}$ with ramification index $q-1$,  every imaginary field $\uK$ contained in $\uK_{\unfk}$ satisfies $[\uK:\uk]\mid (q-1)$, and $\uinfty$ is totally ramified in $\uK$. Consequently, when there a subfield $\uK$ of $\uK_{\unfk}$ which is imaginary  quadratic over $\uk$,
we must have that $q$ is odd.
Moreover, the above theorem leads to a precise analogue of Ogus-type $v$-adic Chowla--Selberg formula presented in the next subsection.
\end{rem}

\subsection{Quadratic case}
\label{sec: v-CS-q}

Suppose $q$ is odd. Let $\udfk \in \uA_+$ be a monic square-free polynomial with odd degree, and let $\uK \assign \uk(\sqrt{-\udfk})$, which is contained in the cyclotomic function field $\uK_{\udfk}$. 
Put $\uO_{\uK} = \uA[\sqrt{-\dfk}]$.
Let $\uE_{\rho}$ be a CM Drinfeld module of rank $2$ over $\bar{k}$ with $\End(\uE_{\rho})\cong \uO_{\uK}$, and let $M_{\rho}$ be the $t$-motive associated with $\uE_{\rho}$ over $\bar{k}$. 
It is known that $M_{\rho}$ has potentially good reduction everywhere (see Remark~\ref{rem: period-Gauss-vmidn}~(2)).

Without loss of generality, assume that the CM type of $M_{\rho}$ is $(\uK,\xi_{\id_{\uK}})$.
As $\uK^+=\uk$ and the subgroup $H_{\uK}$ of $(\uA/\udfk)^{\times}$ corresponding to $\uK$ has index $2$, there exists only one non-trivial character $\chi_{\uK}:(\uA/\udfk)^{\times}\rightarrow \{\pm 1\}$ with $\ker (\chi_{\uK}) = H_{\uK}$.
Thus in this case, if we write $G_{\uK} = \{\id_{\uK},\varrho_1\}$, then we get
\[
\wt(\xi_{\id_{\uK}}) = 1,\quad m_{\id_{\uK}}=1,\ m_{\varrho_1}=0, \quad 
h_{\uK/\uK^+} = L_{\uA}(0,\chi_{\uK}) = \#\Pic(\uO_{\uK})\rassign h_{\uK},
\]
\[
\quad \text{ and for $\varrho \in G_{\uK}$,} \quad 
n_{\ucfk}(\varrho,\ua) = 
\begin{cases}
\chi_{\uK}(\varrho) \cdot \displaystyle \frac{\chi_{\uK}(\ua)}{h_{\uK}}, & \text{ if $\ucfk = \udfk$;} \\
0, & \text{ otherwise.}
\end{cases}
\]

Write $J_{\uK} = \{\xi_{+}, \xi_{-}\}$, where $\xi_{\chi_{\uK}(\varrho)}\assign \xi_{\varrho}$ for $\varrho \in G_{\uK}$, and also put
\[
\omega_{\pm }^{\rho} \assign \omega^{M_{\rho}}_{\varrho} \quad \text{ where $\varrho \in G_{\uK}$ with $\chi_{\uK}(\varrho) = \pm 1$.}
\]
Then
Theorem~\ref{thm: v-CS} gives us the following:

\begin{thm}\label{thm: v-CS-quad}
Keep the above notation. There exists $\zeta_{\pm } \in \overline{\FF}_q^{\times}$ so that for each $\sigma_v \in \Wcal_v$ with $\deg \sigma_v =1$, we have 
\[
\sigma_{v,\cris}(\omega_{\pm }^{\rho}) = \zeta_{\pm } \cdot v^{\frac{1}{2}} \cdot \left(\prod_{\subfrac{\ua \in \uA,\, \deg \ua < \deg \udfk}{\gcd(\ua,\udfk)=1}}
\Gamma_v^{\geo,*}\Big\lbangle\frac{va}{\dfk}\Big\rbangle^{\mp \frac{\chi_{\uK}(\ua)}{2h_{\uK}}}\right)
\cdot \omega_{\pm \chi_{\uK}(\uv)}^{\rho}.
\]
\end{thm}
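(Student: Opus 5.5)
The plan is to specialize Theorem~\ref{thm: v-CS} to $\uK=\uk(\sqrt{-\udfk})\subset\uK_{\udfk}$ with $\unfk=\udfk$. We use $M=M_{\rho}$, with CM type $\Xi=\xi_{\id_{\uK}}$ (Example~\ref{ex: CM-t-mot}~(3)), so that $m_{\id_{\uK}}=1$, $m_{\varrho_1}=0$ and $\wt(\Xi)=1$. Since $\uK$ is imaginary quadratic we have $\uK^+=\uk$, $[\uK:\uK^+]=[\uK:\uk]=2$ and $\widehat{G}_{\uK}\setminus\widehat{G}_{\uK^+}=\{\chi_{\uK}\}$. We first check that the conductor of $\chi_{\uK}$ is exactly $\udfk$; this holds because $\udfk$ is squarefree and $\uK$ ramifies precisely at the primes dividing $\udfk$. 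Consequently $n_{\ucfk}(\varrho,\ua)=0$ for $\ucfk\neq\udfk$, and $n_{\udfk}(\varrho,\ua)=\chi_{\uK}(\varrho)\chi_{\uK}(\ua)/L_{\uA}(0,\chi_{\uK})$. We then identify $L_{\uA}(0,\chi_{\uK})=h_{\uK/\uK^+}$ with $h_{\uK}=\#\Pic(\uO_{\uK})$. This uses the analytic class number formula for the imaginary quadratic extension. Here $\uinfty$ is ramified because $\deg\udfk$ is odd and $q$ is odd, so the unit index and the residue degree at infinity are both trivial and the ratio $\zeta_{\uK}/\zeta_{\uk}$ at $s=0$ gives exactly $h_{\uK}$. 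This matches \eqref{eqn: rcn}.

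Next we plug these values into the formula of Theorem~\ref{thm: v-CS}. The power of $v$ becomes $v^{\wt(\Xi)/[\uK:\uK^+]}=v^{1/2}$. In the product over $\varrho\in G_{\uK}$, only the $\varrho$ with $m_{\varrho\varrho_0}\neq 0$ contributes, namely $\varrho\varrho_0=\id_{\uK}$, i.e.\ $\varrho=\varrho_0^{-1}=\varrho_0$. For that term $m=1$ and $1/[\uK:\uk]=1/2$. The exponent of $\Gamma_v^{\geo,*}\lbangle va/\dfk\rbangle$ is therefore
\[
-\frac{\chi_{\uK}(\varrho_0)\chi_{\uK}(\ua)}{2h_{\uK}}.
\]
Writing $\omega^{\rho}_{\pm}=\omega^{M_\rho}_{\varrho_0}$ with $\chi_{\uK}(\varrho_0)=\pm1$, this exponent is $\mp\chi_{\uK}(\ua)/(2h_{\uK})$, as claimed.

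Finally, the target differential is $\omega^{M_\rho}_{\varrho_{\uv}\varrho_0}$. Since $\chi_{\uK}(\varrho_{\uv}\varrho_0)=\chi_{\uK}(\uv)\chi_{\uK}(\varrho_0)$ (via the Artin identification \eqref{eqn: ASrho}), this differential is $\omega^{\rho}_{\pm\chi_{\uK}(\uv)}$. The roots of unity $\zeta_{\varrho_0}$ become $\zeta_{\pm}$.

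The main point to watch is the hypotheses. Theorem~\ref{thm: v-CS} requires good reduction at $v$, which is supplied by the potentially good reduction of CM $t$-motives after enlarging $F$; the crystalline action is insensitive to this enlargement. It also requires $\uv\nmid\udfk$, which is assumed. The other delicate step is the identification $L_{\uA}(0,\chi_{\uK})=h_{\uK}$, where we must make sure no extra factor such as $2$ or the index of $\FF_q^\times$ arises from the behaviour at $\uinfty$. I would verify it by comparing the zeta functions of $\uK$ and $\uk$ at $s=0$ in the ramified-at-$\uinfty$ case.
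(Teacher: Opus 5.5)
Your proposal is correct and takes the same route as the paper. The paper also obtains Theorem~\ref{thm: v-CS-quad} by specializing Theorem~\ref{thm: v-CS} with exactly your data: $\wt(\Xi)=1$, $m_{\id_{\uK}}=1$, $m_{\varrho_1}=0$, $h_{\uK/\uK^+}=L_{\uA}(0,\chi_{\uK})=h_{\uK}$, and $n_{\ucfk}(\varrho,\ua)$ vanishing unless $\ucfk=\udfk$. Your bookkeeping of the single surviving factor $\varrho=\varrho_0$ and of the target differential $\omega^{\rho}_{\pm\chi_{\uK}(\uv)}$ agrees with it. The paper only asserts the conductor of $\chi_{\uK}$ and the class-number identity, which you justify, so your write-up is slightly more complete on those points.
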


\begin{rem}
It is also natural to ask whether the monomial relations of $v$-adic geometric special gamma values arised from the standard functional equations in Proposition~\ref{prop: FE}, together with the additional relation coming from the Gross--Koblitz--Thakur formula stated in Theorem~\ref{thm: G-K-T-formula-geo}, explain all the algebraic relations among these values in question.

Recall that given $\unfk \in \uA_+$ with $\deg \unfk>0$, $\Ascr_{\unfk}^{\geo}$ is the free abelian group generated by elements in $\nfk^{-1}A$ (where $\nfk = \unfk(\theta)=\iota(\unfk)$).
Let $\Ascr_{\unfk,\QQ}^{\geo}\assign \Ascr_{\unfk}^{\geo}\otimes_{\ZZ}\QQ$ and $\Dscr_{\unfk,\QQ}^{\geo}$ be the subspace of $\Ascr_{\unfk,\QQ}^{\geo}$ generated by
\[
[x+a]-[x], \quad \forall\  x \in \nfk^{-1}A,\ a \in A,
\]
and
\[
[\mfk x] - \sum_{a \in A,~\deg a<\deg \mfk} [x+\frac{a}{\mfk}], \quad \forall \mfk \in A_+ \text{ with } \mfk \mid \nfk.
\]
The quotient $\Uscr_{\unfk,\QQ}^{\geo}\assign \Ascr_{\unfk,\QQ}^{\geo}/\Dcal_{\unfk,\QQ}^{\geo}$ is called the \emph{universal ordinary distribution} on $\nfk^{-1}A/A$.
Suppose $v \nmid \nfk$. Let $\Vscr_{\unfk,\QQ}^{\geo}$ be the subspace of $\Uscr_{\unfk,\QQ}^{\geo}$ generated by the `multiplication' relation
\[
\sum_{\epsilon \in \FF_q^{\times}} [\epsilon x], \quad \forall x \in \nfk^{-1}A,
\]
and let $\Wscr_{\unfk,\QQ}^{\geo}$ be the subspace of $\Uscr_{\unfk,\QQ}^{\geo}$ generated by the following `Gross--Koblitz--Thakur' relation:
\[
\sum_{i=0}^{\ell-1} [v^ix], \quad \forall x \in \nfk^{-1}A, 
\quad \text{where $\ell$ is the order of $\bar{v} \assign (v \bmod \nfk) \in (A/\nfk)^{\times}$.}
\]
Then:
\begin{lem}\label{lem: dim-U}
Keep the notation as above.
Let $H_v$ be the subgroup of $(A/\nfk)^{\times}$ generated by $\bar{v}$ and $\FF_q^{\times}$.
We have
\[
\dim_{\QQ}\left(\frac{\Uscr_{\unfk,\QQ}^{\geo}}{\Vscr_{\unfk,\QQ}^{\geo}+\Wscr_{\unfk,\QQ}^{\geo}}\right) =    
\Big(1- \frac{1}{q-1}- \frac{1}{\ell} + \frac{1}{\#H_v}\Big) \cdot \#\Big(\frac{A}{\nfk}\Big)^{\times}.
\]
\end{lem}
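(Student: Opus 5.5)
The plan is to decompose everything into characters of $G\assign(A/\nfk)^{\times}$. First, $G$ acts on $\Ascr_{\unfk,\QQ}^{\geo}$ by $b*[x]\assign[bx]$, as in \eqref{eqn: a-act}. This action preserves both families of generators of $\Dscr_{\unfk,\QQ}^{\geo}$: multiplying $x+a$ by $b$ gives $bx+ba$, and the distribution relation for $\mfk$ is carried to the distribution relation at $bx$, because $b$ permutes $\{a \bmod \mfk\}$ when $\gcd(b,\mfk)=1$. So $G$ acts on $\Uscr_{\unfk,\QQ}^{\geo}$. The same argument shows that $\Vscr_{\unfk,\QQ}^{\geo}$ and $\Wscr_{\unfk,\QQ}^{\geo}$ are $G$-stable.

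More precisely, $\Vscr_{\unfk,\QQ}^{\geo}=N_{\FF_q^\times}\cdot\Uscr_{\unfk,\QQ}^{\geo}$ and $\Wscr_{\unfk,\QQ}^{\geo}=N_{\langle\bar v\rangle}\cdot\Uscr_{\unfk,\QQ}^{\geo}$. Here $N_H\assign\sum_{h\in H}h\in\QQ[G]$ for a subgroup $H\le G$, and $\langle \bar v\rangle$ has order $\ell$. This holds because the listed generators are exactly the $N_H$-images of the classes $[x]$, and these classes span $\Uscr_{\unfk,\QQ}^{\geo}$.

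The key structural input is the function field analogue of Kubert's theorem on the universal ordinary distribution: $\Uscr_{\unfk,\QQ}^{\geo}$ is free of rank one over $\QQ[G]$. Equivalently, after tensoring with $\CC$, every character $\chi\in\widehat G$ occurs in $\Uscr_{\unfk,\CC}^{\geo}$ with multiplicity exactly one. I would argue this by adapting Kubert's argument, i.e.\ the rank computation for distributions on $\nfk^{-1}A/A$ via induction on the prime factorization of $\nfk$, which goes through verbatim with $\ZZ$ replaced by $A$ (this is also implicit in \cite[\S 6]{ABP04}). I then check that the rank equals $\#G$ and that the $\chi$-isotypic parts are nonzero. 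For the latter, use the $\chi$-twisted functional that sends $[x]$ with $x$ of exact denominator $\mfk$ to $\sum_{b}\chi(b)[\,bx\,]$ paired against Dirichlet data of conductor dividing $\mfk$, so that it is compatible with the distribution relations. I expect this step, establishing multiplicity exactly one for every $\chi$ (including imprimitive and trivial $\chi$), to be the main obstacle. If a direct reference is unavailable, the fallback is a dimension count: the rank is $\#G$ and each character contributes at least one, which forces multiplicity exactly one.

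Granting this, work over $\CC$. On the $\chi$-line, $N_H$ acts by $\#H$ if $\chi|_H=1$ and by $0$ otherwise. Hence $\Vscr_{\unfk,\CC}^{\geo}+\Wscr_{\unfk,\CC}^{\geo}$ is the sum of the $\chi$-lines for those $\chi$ trivial on $\FF_q^\times$ or on $\langle\bar v\rangle$. Since $\chi$ is trivial on both exactly when it is trivial on $H_v=\FF_q^\times\langle\bar v\rangle$, inclusion–exclusion gives
\[
\dim_\CC\big(\Vscr_{\unfk,\CC}^{\geo}+\Wscr_{\unfk,\CC}^{\geo}\big)=\frac{\#G}{q-1}+\frac{\#G}{\ell}-\frac{\#G}{\#H_v}.
\]
Here I use that $\FF_q^\times$ injects into $G$ when $\deg\nfk>0$. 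Subtracting this from $\dim_\CC\Uscr_{\unfk,\CC}^{\geo}=\#G$ gives the claimed formula, and dimensions of the quotient over $\QQ$ and over $\CC$ agree.
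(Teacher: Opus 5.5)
Your proposal is correct and follows the paper's proof. The paper takes $\Uscr_{\unfk,\QQ}^{\geo}\cong\QQ[(A/\nfk)^{\times}]$ directly from \cite[Theorem 6.1.3]{ABP04}, so your sketched $\chi$-twisted-functional argument for the Kubert-type step is not needed. It then identifies $\Vscr_{\unfk,\QQ}^{\geo}$ and $\Wscr_{\unfk,\QQ}^{\geo}$ with the subspaces fixed by $\FF_q^{\times}$ and by $\bar v$ (the same as your norm images, since $\QQ[(A/\nfk)^{\times}]$ is semisimple), and finishes by inclusion--exclusion with the intersection equal to the $H_v$-fixed subspace, exactly as your character count does.
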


\begin{proof}
Observe that the action of $(A/\nfk)^{\times}$ on $\Ascr_{\unfk,\QQ}^{\geo}$ given in \eqref{eqn: a-act} induces an action on $\Uscr_{\unfk,\QQ}^{\geo}$, and it is known that (see \cite[Theorem 6.1.3]{ABP04}) 
\begin{equation}\label{eqn: dim-U}
\Uscr_{\unfk,\QQ}^{\geo} \cong \QQ[(A/\nfk)^{\times}] \quad \text{ as  $\QQ[(A/\nfk)^{\times}]$-modules.}
\end{equation}
In particular, one has $\dim_{\QQ} \Uscr_{\unfk,\QQ}^{\geo} = \#(A/\nfk)^{\times}$.
Moreover,  $\Vscr_{\unfk,\QQ}^{\geo}$ coincides with the subspace of $\Uscr_{\unfk,\QQ}^{\geo}$ consisting of elements fixed by every $\epsilon \in \FF_q^{\times}$, and  $\Wscr_{\unfk,\QQ}^{\geo}$ equals to the subspace of $\Uscr_{\unfk,\QQ}^{\geo}$ consisting of elements fixed by $\bar{v} \in (A/\nfk)^{\times}$.
Hence
\[
\dim_{\QQ}(\Vscr_{\unfk,\QQ}^{\geo}) = \frac{1}{q-1}\cdot \#\left(\frac{A}{\nfk}\right)^{\times},\quad
\dim_{\QQ}(\Wscr_{\unfk,\QQ}^{\geo}) = \frac{1}{\ell}\cdot \#\left(\frac{A}{\nfk}\right)^{\times},
\]
and
\[\dim_{\QQ}(\Vscr_{\unfk,\QQ}^{\geo}\cap \Wscr_{\unfk,\QQ}^{\geo}) = \frac{1}{\#H_v}\cdot \#\left(\frac{A}{\nfk}\right)^{\times}. 
\]
Therefore
\begin{align*}
\dim_{\QQ}\left(\frac{\Uscr_{\unfk,\QQ}^{\geo}}{\Vscr_{\unfk,\QQ}^{\geo}+\Wscr_{\unfk,\QQ}^{\geo}}\right) &=  
\dim_{\QQ}(\Uscr_{\unfk,\QQ}^{\geo})-
\left(\dim_{\QQ}(\Vscr_{\unfk,\QQ}^{\geo})+\dim_{\QQ}(\Wscr_{\unfk,\QQ}^{\geo})
-\dim_{\QQ}(\Vscr_{\unfk,\QQ}^{\geo}\cap \Wscr_{\unfk,\QQ}^{\geo})\right)\\
&= \Big(1- \frac{1}{q-1}- \frac{1}{\ell} + \frac{1}{\#H_v}\Big) \cdot \#\Big(\frac{A}{\nfk}\Big)^{\times}.
\end{align*}
\end{proof}

Define $\hat{\Gamma}_v^{\geo}: \Ascr_{\unfk,\QQ}^{\geo} \rightarrow \CC_v^{\times}/\bar{k}^{\times}$ by 
\[
\hat{\Gamma}_v^{\geo}(\boxx)\assign \Gamma_v^{\geo,*}\lbangle \boxx \rbangle \cdot \bar{k}^{\times}, \quad \forall \boxx \in \Ascr_{\unfk,\QQ}^{\geo}.
\]
Then the standard functional equations of $\Gamma_v^{\geo}$ in Proposition~\ref{prop: FE} and the Gross--Koblitz--Thakur formula in Theorem~\ref{thm: G-K-T-formula-geo} implies that $\hat{\Gamma}_v^{\geo}$ factors through $\Uscr_{\unfk,\QQ}^{\geo}\big/(\Vscr_{\unfk,\QQ}^{\geo}+\Wscr_{\unfk,\QQ}^{\geo})$, whence
\[
\trdeg_{\bar{k}}\, \bar{k}\big(\Gamma_v^{\geo}(x)\ \big|\ x \in \nfk^{-1} A\big)
\leq \Big(1- \frac{1}{q-1}- \frac{1}{\ell} + \frac{1}{\#H_v}\Big) \cdot \#\Big(\frac{A}{\nfk}\Big)^{\times}.
\]
We propose a Lang--Rohrlich-type conjecture at the end, which will be investigated in a subsequent paper:
\end{rem}

\begin{conj}\label{conj: v-LR-conj}
Given $\nfk \in A_+$ with $\deg \nfk >0$,
suppose $v \nmid \nfk$.
Let $\ell$ be the order of $\bar{v} \assign (v \bmod \nfk) \in (A/\nfk)^{\times}$. 
Let $H_v$ be the subgroup of $(A/\nfk)^{\times}$ generated by $\bar{v}$ and $\FF_q^{\times}$.
Then 
\[
\trdeg_{\bar{k}}\, \bar{k}\big(\Gamma_v^{\geo}(x)\ \big|\ x \in \nfk^{-1} A\big)
\stackrel{?}{=} \Big(1- \frac{1}{q-1}- \frac{1}{\ell} + \frac{1}{\#H_v}\Big) \cdot \#\Big(\frac{A}{\nfk}\Big)^{\times}.
\]    
In other words, all the algebraic relations among $v$-adic geometric special gamma values (at rationals) would be completely explained by the monomial relations coming from the standard functional equations and the Gross--Koblitz--Thakur formula.
\end{conj}

\subsection*{Acknowledgements}

The author is very grateful to Chieh-Yu Chang and Jing Yu for very insightful discussions at the very beginning of this project and for their strong and constant support.

\bibliographystyle{alpha}

\begin{thebibliography}{999999999}

\bibitem[AnFr25]{AnFr25}
Ancona, G.~and Frăţilă, D., {\it Algebraic classes in mixed characteristic and Andr\'e's $p$-adic periods}, 
J.~Inst.~Math. Jussieu 24 no.~4 (2025) 1093--1138.



\bibitem[And82]{And82}
Anderson, G.~W.,
\textit{Logarithmic derivatives of Dirichlet $L$-functions and the periods of abelian varieties}, Compositio Math.\ tome 45, n$^{\rm o}\,$3 (1982) 315--332.

\bibitem[A86]{A86}
Anderson, G.\  W., \textit{$t$-motives}, Duke Math.~J.~\textbf{53} (1986), no.~2, 457--502.

\bibitem[A92]{A92}
Anderson, G.~W.,
{\it A two-dimensional analogue of Stickelberger's theorem}, 57-77, in {\it The Arithmetic of Function Fields}
(D.~Goss, D.~R.~Hayes, and M.~I.~Rosen, eds.), W.~de Gruyter, Berlin, 1992.

\bibitem[ABP04]{ABP04}
Anderson, G.~W., Brownawell, W.~D., and Papanikolas,  M.~A.,
\textit{Determination of the algebraic relations among special $\Gamma$-values in positive characteristic}, Ann.~of Math.~(2) \textbf{160} (2004), no.~1, 237--313.


\bibitem[AsHa26]{AsHa26}
Asakura, M.~and Hagihara, K., {\it Frobenius structure on hypergeometric equations, $p$-adic polygamma values and $p$-adic $L$-values}, 
Tunis.~J.~Math.~8~no.~3 (2026) 453-496.



\bibitem[BM16]{BM16}
Barquero-Sanchez, A.~and Masri, R., \textit{The Chowla--Selberg formula for abelian CM fields and Faltings height}, Compositio Math.\ \textbf{152} (2016), 445--476.






\bibitem[BCPW22]{BCPW22}
Brownawell,W.~D., Chang, C.-Y., Papanikolas, M.~A., and Wei, F.-T., {\it Function field analogue of
 Shimura's conjecture on period symbols}, preprint (arXiv:2203.09131). 

\bibitem[BP02]{BP02}
Brownawell, W.~D.~and Papanikolas, M.~A., {\it Linear independence of Gamma-values in positive characteristic}, J.~Reine Angew.~Math.~549 (2002), 91--148.

\bibitem[Ch26]{Ch26}
Chang, C.-Y., {\it On special gamma values: Chowla--Selberg formula and Rohrlich--Lang conjecture revisited}, preprint.


\bibitem[CWY26]{CWY26}
Chang, C.-Y., Wei, F.-T., and Yu, J.,
{\it $v$-adic periods of Carlitz motives and Chowla--Selberg formula revisited}, to appear in Compositio math.~(arXiv: 2407.15024).



\bibitem[Ch25]{Ch25}
Chang, T.-W.,
{\it Geometric Gauss sums and Gross--Koblitz formula over function fields}, preprint (arXiv: 2502.01109).




\bibitem[CS67]{CS67}
Chowla, S.~and Selberg, A., {\it On Epstein's zeta-function}, J.~Reine Angew.~Math.~ 227(1967), 86--110.



\bibitem[Co93]{Co93}
Colmez, P., {\it P\'eriodes des Vari\'et\'es Ab\'eliannes \`a multiplication complexe}, Ann.~of Math.~\textbf{138} (1993) 625--683.

\bibitem[Co26]{Co26}
Colmez, P., {\it La formule limite de Kronecker}, 
Adv.~Math.~500 (2026) Paper No.~111096.



\bibitem[DH87]{DH87}
Deligne, P.~and Husem\"oller, D., {\it Survey of Drinfel'd modules},
Comptemporary Mathematics, vol.~67 (1987) 25--91.


\bibitem[Gar02]{Gar02}
Gardeyn, F.,
{\it A Galois criterion for good reduction of $\tau$-sheaves}, J.~Number Theory 79 (2002) 447--471.

\bibitem[Gar03]{Gar03}
Gardeyn, F.,
{\it The structure of analytic $\tau$-sheaves}, J.~Number Theory 100 (2003) 332--362.

\bibitem [Ge89] {Ge89} Gekeler, E.-U., \textit{ On the de Rham isomorphism for Drinfeld modules}, J.~Reine
Angew.~Math.~\textbf{401} (1989), 188--208.

\bibitem[GL11]{GL11}
Genestier, A.~and Lafforgue, V., 
{\it Th\'eorie de Fontaine en \'egales charact\'eristiques}, Ann.~Sci.~\'Ecole Norm.~Sup\'er.~44(2) (2011) 263--360; also available at http://www.math.jussieu.fr/$\sim$vlafforg/

  

\bibitem[Goss96] {Goss96} Goss, D., \textit{Basic structures of function field arithmetic}, Springer-Verlag, Berlin, 1996.


\bibitem[Gr78]{Gr78} Gross, B.~H.,
{\it On the periods of abelian integrals and a formula of Chowla and Selberg}, Inventiones math.~45 (1978) 193--211.

\bibitem[Gr21]{Gr21} Gross, B.~H., 
{On the periods of abelian varieites}, ICCM Not.\ 8 no.\ 2 (2020) 10--18.

\bibitem[GrKo79]{GrKo79}Gross, B.~H.~and Koblitz, N., {\it Gauss sums and the $p$-adic $\Gamma$-function}, Ann.~of Math., vol.~109, No.~3 (1979) 569--581.



\bibitem[Ha09]{Ha09}
Hartl, U., {\it A dictionary between Fontaine-Theory and its analogue in equal characteristic},
J.~Number Theory 129 no.~7 (2009) 1734--1757.




\bibitem[HaJu20]{HJ20} Hartl, U.~and Juschka, A.-K., \textit{ Pink's theory of Hodge structures and the Hodge conjecture over function fields} in: $t$-Motives: Hodge Structures, Transcendence and Other Motivic Aspects, Eur. Math. Soc., Z\"{u}rich, 2020, pp.~31--182.

\bibitem[HaKi20]{HK20} Hartl U.~and Kim, W., \textit{Local Shtukas, Hodge--Pink structures, and Galois representations} in: $t$-Motives: Hodge Structures, Transcendence and Other Motivic Aspects, Eur. Math. Soc., Z\"{u}rich, 2020, pp.~183--259.

\bibitem[HaSi21]{HaSi21} Hartl, U.~and Singh, R.~K., {\it Product formulas for periods of CM abelian varieties and the function field analog}, J.~Number Theory 218 (2021) 62--151.


\bibitem[Hay74]{Hay74}
Hayes, D.~R., {\it Explicit class field theory for rational function fields},
Transactions of the American Mathematical Society vol.~189 (1974) 77--91.


\bibitem[KaYo08]{KaYo08}
Kashio, T.~and Yoshida, H., {\it On $p$-adic absolute CM periods I},
Amer.~J.~Math.~130 no.~6 (2008) 1629--1685.

\bibitem[KaYo09]{KaYo09}
Kashio, T.~and Yoshida, H., {\it On $p$-adic absolute CM periods II},
Publ.~Res.~Inst.~Math.~Sci.~45 no.~1 (2009) 187--225.








\bibitem[Lam78]{Lam78}
Lam, T.~Y., {\it Serre's conjecture}, Lecture Notes in
Mathematics 635, Springer-Verlag Berlin Heidelberg New York 1978.




\bibitem[Milne86]{Milne} Milne, J.S., \textit{Abelian Varieties},
  chapter VII in \textit{Arithmetic Geometry}, G.~Cornell and J.~H.~Silverman, eds., Springer-Verlag, New York, 1986.


  \bibitem[NP24]{NP24} Namoijam, C.~and Papanikolas, M.~A., \textit{Hyperderivatives of periods and quasi-periods for Anderson $t$-modules}, Mem.~Amer.~Math.~Soc.~\textbf{302} (2024), no.~1517.



\bibitem[Og90]{Og90}
Ogus, A., {\it A $p$-adic analogue of the Chowla--Selberg formula}, in ``{\it $p$-adic analysis}'', Lecture notes in mathematics 1454, Springer-Verlag Berlin Heidelberg (1990) 319--341.



\bibitem[PT06]{PT06}
Pink, R.~and Tarulsen, M., {\it The isogeny conjecture for $t$-motives associated to direct sums of Drinfeld modules}, J.~number theory, Volume 117 (2006) 355--375.


\bibitem[Pel09]{Pel09} Pelzer, A., {\it Der Hauptsatz der Komplexen Multiplikation f\"ur Anderson $A$-motive}, Diploma thesis, University of Muenster, 2009.


\bibitem[Qui76]{Qui76}
Quillen, D., {\it Projective modules over polynomial rings}, Inventiones math.~36 (1976) 167--171.

  

\bibitem[Ros02]{Ros02}
Rosen, M., {\it Number theory in function fields}, Springer-Verlag New York, 2002. 


\bibitem[Si97] {Sinha97} Sinha, S.~K., {\it Periods of $t$-motives and transcendence},
Duke Math.~J.~\textbf{88} (1997), 465--535.



\bibitem[Sus76]{Sus76}
Suslin, A.~A., {\it Projective modules over polynomial rings are free}, Soviet Mathematics 17(4) (1976) 1160--1164.





\bibitem[Tael09]{Tael09}
Taelman, L., {\it Artin $t$-motifs},
J.~number theory, Volume 129, Issue 1
(2009) 142--157.

\bibitem[Tha91]{Tha91}
Thakur, D.~S., {\it Gamma functions for function fields and Drinfeld modules},
Ann.~of Math.~(2) 134.1 (1991) 25--64.

\bibitem[Wei26]{Wei26}
Wei, F.-T., {\it Algebraic relations among special gamma values and the Chowla-Selberg phenomenon over function fields}, Inventiones~math.~vol.~244 (2026) 815--897.


\bibitem[Win74]{Win74}
Winter, D.~J., {\it The Structure of Fields}, Springer-Verlag, New York, 1974.




\bibitem[Yang10]{Yang10}
Yang, T., \textit{The Chowla--Selberg formula and the Colmez
conjecture}, Canadian J.~Math.~62 (2010) 456--472.



\bibitem[Yo03]{Yo03}
Yoshida, H., {\it Absolute CM-periods},
Mathematical surverys and monographs vol.~106 (2003).





\bibitem [Yu90]{Yu90} Yu, J., \textit{On periods and quasi-periods of
Drinfeld modules}, Compositio Math. \textbf{74} (1990), 235--245.


\end{thebibliography}

\end{document}